\newtheoremstyle{break}
  {10pt}
  {15pt}
  {}
  {}
  {\bfseries}
  {}
  {\newline}
  {}
\theoremstyle{definition}
\newtheorem{defn}{Definition}[section]
\newtheorem{theorem}{Theorem}[section]
\newtheorem{corollary}{Corollary}[section]
\newtheorem{lemma}[theorem]{Lemma}
\newtheorem{fact}{Fact}[section]
\newtheorem{proposition}[theorem]{Proposition}
\newcommand{\K}{\mathbb{K}_3^{+}}
\newcommand{\LP}{\mathbb{LP}^{+}}
\newcommand{\ST}{\mathbb{ST}^{+}}
\newcommand{\TS}{\mathbb{TS}^{+}}
\newtheorem*{rep@theorem}{\rep@title}
\newcommand{\newreptheorem}[2]{%
\newenvironment{rep#1}[1]{%
 \def\rep@title{#2 \ref{##1}}%
 \begin{rep@theorem}}%
 {\end{rep@theorem}}}
\newcommand{\nmodels}{\Relbar\joinrel\mathrel{|}}
\newcommand{\ms}{\scriptscriptstyle}
\newcommand\Mysim{\mathord{\sim}}
\newcommand\val{\mathord{v^{\star}}}
\begin{document}

\title{$\mathsf{ST}$ and $\mathsf{TS}$ as Product and Sum
}

\author{Quentin Blomet \and Paul \'Egré}
\date{}
 
\maketitle

\begin{abstract}
\noindent The set of $\mathsf{ST}$-valid inferences is neither the intersection, nor the union of the sets of $\mathsf{K}_3$- and $\mathsf{LP}$-valid inferences, but despite the proximity to both systems, an extensional characterization of $\mathsf{ST}$ in terms of a natural set-theoretic operation on the sets of $\mathsf{K}_3$- and $\mathsf{LP}$-valid inferences is still wanting.
In this paper, we show that it is their {\it relational product}. Similarly, we prove that the set of $\mathsf{TS}$-valid inferences can be identified using a dual notion, namely as the {\it relational sum} of the sets of $\mathsf{LP}$- and $\mathsf{K}_3$-valid inferences. We discuss links between these results and the interpolation property of classical logic. We also use those results to revisit the duality between $\mathsf{ST}$ and $\mathsf{TS}$. We present a combined notion of duality on which $\mathsf{ST}$ and $\mathsf{TS}$ are dual in exactly the same sense in which $\mathsf{LP}$ and $\mathsf{K}_3$ are dual to each other.
\end{abstract}

\sloppy

\section{Introduction}
This paper deals with the substructural logics $\mathsf{ST}$ and $\mathsf{TS}$, first introduced under those names in \cite{Cobrerosetal2012}, and also known as $p$-consequence and $q$-consequence logics (\cite{Frankowski2004, Malinowski1990}). These logics, which admit a canonical three-valued characterization (\cite{ripley2012conservatively,cobreros2015vagueness, chemla2019suszko}), are closely related to the well-known systems $\mathsf{LP}$ and $\mathsf{K}_3$, namely Priest's Logic of Paradox \parencite{Asenjo1966, Priest1979} and Kleene's Strong three-valued logic \parencite{Kleene1952-KLEITM}. However, whereas $\mathsf{LP}$-validity and $\mathsf{K}_3$-validity are defined in terms of the preservation of a designated set of truth-values (truth for $\mathsf{K}_3$, non-falsity for $\mathsf{LP}$), $\mathsf{ST}$ and $\mathsf{TS}$ are systems of ``mixed consequence'', in which validity means that the strict truth of the premises (or $\mathsf{K}_3$-truth) entails the non-falsity of the conclusions (or $\mathsf{LP}$-truth), and conversely for $\mathsf{TS}$.

The question this paper proposes to answer is the following: is there a natural characterization of $\mathsf{ST}$-validity and $\mathsf{TS}$-validity, directly in terms of the valid inferences of $\mathsf{LP}$ and the valid inferences of $\mathsf{K}_3$? Some results exist that relate the \textit{meta}inferences of $\mathsf{ST}$ to the inferences of $\mathsf{LP}$ (\cite{Barrioetal2015, DicherPaoli2019}), or indeed the metainferences of $\mathsf{TS}$ to the inferences of $\mathsf{K}_3$ (\cite{Cobrerosetal2020}). Other results show that classical logic (and therefore $\mathsf{ST}$) is the least extension of Dunn-Belnap's four-valued logic ($\mathsf{FDE}$) that contains both $\mathsf{K}_3$ and $\mathsf{LP}$ \parencite{Albuquerqueetal2017, prenosil_2023}. However, in this paper we are interested in whether $\mathsf{ST}$-valid and $\mathsf{TS}$-valid inferences can be defined in terms of some more direct set-theoretic operation on the sets of valid inferences of $\mathsf{K}_3$ and $\mathsf{LP}$. 

It is known that the set of $\mathsf{ST}$-valid inferences is neither the intersection, nor the union of the sets of $\mathsf{K}_3$- and $\mathsf{LP}$-valid inferences (viz. \cite{Chemlaetal2017}), but which operation relates them exactly remained to be determined. The first result we establish in this paper --- originally a conjecture formulated by the second author in 2011, but long left sitting in a drawer --- is that $\mathsf{ST}$ is the \textit{relational product} of $\mathsf{LP}$ and $\mathsf{K}_3$ (in the sense of \cite{Maddux2006}): this means that when $\Gamma$ $\mathsf{ST}$-entails $\Delta$, there exists a formula $\phi$ such that $\Gamma$ $\mathsf{K}_3$-entails $\phi$, and $\phi$ $\mathsf{LP}$-entails $\Delta$. Similarly, we prove that the set of $\mathsf{TS}$-valid inferences can be identified using a dual notion, namely as the \textit{relational sum} of the sets of $\mathsf{LP}$-valid and $\mathsf{K}_3$-valid inferences: this means that when $\Gamma$ $\mathsf{TS}$-entails $\Delta$, then every formula $\phi$ is such that either $\Gamma$ $\mathsf{LP}$-entails $\phi$, or $\phi$ $\mathsf{K}_3$-entails $\Delta$. These results give us a more precise articulation of the intuition that $\mathsf{ST}$ and $\mathsf{TS}$ are hybrids of $\mathsf{K}_3$ and $\mathsf{LP}$ (viz. \cite{Cobrerosetal2020, zardini2022final}).

To prove our results, we start by rehearsing the definitions of validity in $\mathsf{K}_3$, $\mathsf{LP}$, $\mathsf{ST}$ and $\mathsf{TS}$ in Section \ref{sec:definitions}. In Section \ref{sec:relations}, we briefly review why intersection and union fail to adequately characterize $\mathsf{ST}$ and $\mathsf{TS}$, and we introduce the operations of relational product and relation sum that underlie our main result. Sections \ref{sec:st} and \ref{sec:ts} give the proofs of the main claims. Section \ref{sec:converse} shows that order matters in these identities, that is $\mathsf{ST}$ and $\mathsf{TS}$ differ from the logics obtained when swapping $\mathsf{K}_3$ and $\mathsf{LP}$ as the operands of sum and product. Section \ref{sec:duality} defines a notion of duality between logics that relates $\mathsf{ST}$ to $\mathsf{TS}$ in exactly the sense in which $\mathsf{LP}$ relates to $\mathsf{K}_3$, this notion is then used to characterize extensionally $\mathsf{ST}$ and $\mathsf{TS}$ in terms of either $\mathsf{LP}$ or $\mathsf{K}_3$.  In section \ref{sec:discussion}, finally, we close with some comparisons: first we examine the way in which the relational characterization of $\mathsf{ST}$ can be related to a refinement of the interpolation theorem proved by \cite{milne2016refinement} for classical logic in a trivalent setting. Then we compare our characterization of the notion of duality of logic with those of \cite{cobreros2021cant} and \cite{DaRéetal2020}, and show that the notion we have got gives us an invariant characterization that was missing.
Finally, we conclude with a statement of some open problems in order to indicate the fruitfulness of our approach.


\section{$\mathsf{LP}$, $\mathsf{K}_3$, $\mathsf{ST}$ and $\mathsf{TS}$ 
}\label{sec:definitions} 
This section provides some technical preliminaries needed for our results. We define the basic propositional logics of interest in this paper, namely $\mathsf{K}_3$, $\mathsf{LP}$, $\mathsf{TS}$, and $\mathsf{ST}$, and the concepts of satisfiability and validity needed to establish our main results.

\begin{defn}[Language]\label{language2}
The propositional language $\mathcal{L}$ is built from a denumerably infinite set $\textit{Var} = \lbrace p, p', \ldots\rbrace$ of propositional variables, using the nullary constants \textit{verum} ($\top$), \textit{falsum} ($\bot$), \textit{lambda} ($\lambda$), negation ($\neg$), disjunction ($\lor$) and conjunction ($\wedge$).
\end{defn}

\noindent
Elements of $\mathcal{L}$ will be denoted by lower case Greek letters or upper case Latin letters, depending on the context. Subsets of $\mathcal{L}$ will be denoted by upper case Greek letters.

\begin{defn}[Inference]
An inference on $\mathcal{L}$ is an ordered pair $\langle \Gamma, \Delta \rangle$ (hereafter symbolized by $\Gamma \Rightarrow \Delta$) where $\Gamma, \Delta \subseteq \mathcal{L}$ are finite sets. We denote by $\text{\textit{INF}}(\mathcal{L})$ the set of all inferences on $\mathcal{L}$.
\end{defn}

\begin{defn}[SK-valuation]
A Strong-Kleene valuation $v$ (or SK-valuation for short) is a function from propositional variables to truth-values:
\begin{align*}
v: \textit{Var} \longrightarrow \left\lbrace 0, \sfrac{1}{2}, 1 \right\rbrace
\end{align*}
that is extended to the whole language by letting
\begin{align*}
v^*(p) &= v(p),\\
v^*(\top) &= 1,\\
v^*(\bot) &= 0,\\
v^*(\lambda) &= \sfrac{1}{2},\\
v^*(\neg \phi) &= 1-v^*(\phi),\\
v^*(\phi \lor \psi) &= max(v^*(\phi), v^*(\psi)),\\
v^*(\phi \wedge \psi) &= min(v^*(\phi), v^*(\psi)).
\end{align*}
\end{defn}

Strong Kleene valuations permit the definition of two notions of truth for a propositional formula, namely strict truth and tolerant truth \parencite[]{cobreros2015vagueness}: a formula $\phi$ is said to be strictly true if $v(\phi) \in \lbrace 1 \rbrace$, and tolerantly true if $v(\phi) \in \lbrace \sfrac{1}{2}, 1\rbrace$. By extension, we will say that a formula is strictly satisfiable, or tolerantly satisfiable, if and only if  there is a valuation that makes it strictly true, or tolerantly true.

We get two well-known infra-classical logics from those definitions, depending on whether logical consequence is defined as the preservation of strict truth or of tolerant truth, corresponding to Strong Kleene logic $\mathsf{K}_3$ \parencite{Kleene1952-KLEITM}, and to the Logic of Paradox $\mathsf{LP}$ \parencite{Asenjo1966, Priest1979}, respectively. Each preserves a different set of designated values from the premises to the conclusions. While the logical consequence of $\mathsf{K}_3$ preserves the value in $S = \lbrace 1 \rbrace$, the one of $\mathsf{LP}$ preserves the values in $T = \lbrace \sfrac{1}{2}, 1 \rbrace$.

\begin{defn}[$\mathsf{LP}$-satisfaction, $\mathsf{LP}$-validity]
An SK-valuation $v$ satisfies an inference $\Gamma \Rightarrow \Delta$ in $\mathsf{LP}$ (symbolized by $v \models_{\mathsf{LP}} \Gamma \Rightarrow \Delta$) if and only if  $v(\gamma) \neq 0$ for all $\gamma \in \Gamma$ only if $v(\delta) \neq 0$ for some $\delta \in \Delta$. An inference $ \Gamma \Rightarrow \Delta$ is valid in $\mathsf{LP}$ (symbolized by $\models_{\mathsf{LP}} \Gamma \Rightarrow \Delta$) if and only if $v \models_{\mathsf{LP}} \Gamma \Rightarrow \Delta$ for all SK-valuations $v$.
\end{defn}

\begin{defn}[$\mathsf{K}_3$-satisfaction, $\mathsf{K}_3$-validity]
An SK-valuation $v$ satisfies an inference $\Gamma \Rightarrow \Delta$ in $\mathsf{K}_3$ (symbolized by $v \models_{\mathsf{K}_3} \Gamma \Rightarrow \Delta$) if and only if  $v(\gamma)=1$ for all $\gamma \in \Gamma$ only if $v (\delta)=1$ for some $\delta \in \Delta$. An inference $ \Gamma \Rightarrow \Delta$ is valid in $\mathsf{K}_3$ (symbolized by $\models_{\mathsf{K}_3} \Gamma \Rightarrow \Delta$) if and only if $v \models_{\mathsf{K}_3} \Gamma \Rightarrow \Delta$ for all SK-valuations $v$.
\end{defn}

It can easily be checked that $\mathsf{K}_3$ is paracomplete in the sense that $\not\models_{\mathsf{K}_3} \emptyset \Rightarrow \phi \lor \neg \phi$, and that $\mathsf{LP}$ is paraconsistent in the sense that $ \not\models_{\mathsf{LP}} \phi \wedge \neg \phi \Rightarrow \emptyset$.

The two definitions of logical consequence for $\mathsf{K}_3$ and $\mathsf{LP}$ rely on the standard definition of a logical consequence as necessary preservation of the same set of designated values from premises to conclusions. Following \textcite{zardini2008model}, \textcite{Cobrerosetal2012, cobreros2015vagueness} consider a more liberal definition of logical consequence. Rather than moving from premises to conclusions from one set of designated values to the same set, they consider the idea of moving from one set of designated values to another set, different from the first. This idea gives rise to two other logics: $\mathsf{ST}$ and $\mathsf{TS}$. As indicated by the names, $\mathsf{ST}$'s logical consequence involves moving from strict to tolerant truth, while $\mathsf{TS}$'s involves moving from tolerant truth to strict truth.\footnote{Both logics make earlier appearances, for $\mathsf{TS}$ in the work of \cite{Malinowski1990} (on q-consequence), and for $\mathsf{ST}$ in that of \cite{Girard1987} and of \cite{Frankowski2004} (on the cut-free sequent calculus for $\mathsf{LK}$, and on p-consequence, respectively). Even earlier manifestations of $\mathsf{ST}$-consequence can be found in Strawson and Belnap (see \cite{BarrioEgre2022} for a historical overview).}

\begin{defn}[$\mathsf{ST}$-satisfaction, $\mathsf{ST}$-validity]
An SK-valuation $v$ satisfies an inference $\Gamma \Rightarrow \Delta$ in $\mathsf{ST}$ (symbolized by $v \models_{\mathsf{ST}} \Gamma \Rightarrow \Delta$) if and only if  $v(\gamma)=1$ for all $\gamma \in \Gamma$ only if $v(\delta) \neq 0$ for some $\delta \in \Delta$. An inference $ \Gamma \Rightarrow \Delta$ is valid in $\mathsf{ST}$ (symbolized by $\models_{\mathsf{ST}} \Gamma \Rightarrow \Delta$) if and only if $v \models_{\mathsf{ST}} \Gamma \Rightarrow \Delta$ for all SK-valuations $v$.
\end{defn}

\begin{defn}[$\mathsf{TS}$-satisfaction, $\mathsf{TS}$-validity]
An SK-valuation $v$ satisfies an inference $\Gamma \Rightarrow \Delta$ in $\mathsf{TS}$ (symbolized by $v \models_{\mathsf{TS}} \Gamma \Rightarrow \Delta$) if and only if  $v (\gamma) \neq 0$ for all $\gamma \in \Gamma$ only if $v(\delta)=1$ for some $\delta \in \Delta$. An inference $ \Gamma \Rightarrow \Delta$ is valid in $\mathsf{TS}$ (symbolized by $\models_{\mathsf{TS}} \Gamma \Rightarrow \Delta$) if and only if $v \models_{\mathsf{TS}} \Gamma \Rightarrow \Delta$ for all SK-valuations $v$.
\end{defn}

It follows from the definition of $\mathsf{ST}$-validity that the set of valid inferences of the $\lambda$-free fragment of $\mathsf{ST}$ coincides exactly with the set of inferences valid in classical propositional logic (\cite{Cobrerosetal2012}): an inference is valid in the $\lambda$-free fragment of $\mathsf{ST}$ if and only if it is classically valid. The $\lambda$-including fragment of $\mathsf{ST}$ is non-classical, however, in particular it fails structural transitivity, because although $\top \Rightarrow \lambda$ and $\lambda \Rightarrow \bot$ are both $\mathsf{ST}$-valid, $\top \Rightarrow \bot$ is not. $\mathsf{TS}$ on the other hand fails structural reflexivity, already on the $\lambda$-free fragment: $p\Rightarrow p$ is not valid. Moreover, $\mathsf{TS}$ is empty of validities in the fragment not involving $\top$ and $\bot$, and its validities in the language including both constants are very limited (more on this below).

Hereinafter, we will distinguish between a logic $\mathsf{L}$, its set of valid inferences $\mathbb{L}^{+}$, and its set of antivalid inferences $\mathbb{L}^{-}$. 
    The reason for this distinction is twofold. Firstly, as exemplified by the difference between classical logic and $\mathsf{ST}$ regarding transitivity, two logics can share the same set of valid inferences but still differ at the metainferential level.\footnote{\label{fn:local}At least on the local definition of metainferential validity, and not on the global one. For the distinction between these two notions, which lies beyond the scope of this paper, see  \textcite{DicherPaoli2019} and \textcite{Barrioetal2020}.} Secondly, as shown by \textcite{scambler2020classical}, two logics can have the same valid inferences and metainferences while having different antivalidities.\footnote{Several notions of antivalidities have been put forward in the literature on $\mathsf{ST}$. \textcite{scambler2020classical} and \textcite{DaRéetal2020} say that an inference $\Gamma \Rightarrow \Delta$ is antivalid when for all $v$,  $v \not\models \Gamma \Rightarrow \Delta$, while \textcite{cobreros2021cant} say that an inference $\Gamma \Rightarrow \Delta$ is antivalid when for all $v$,  if $v(\gamma)$ is not designated for all $\gamma \in \Gamma$, then $v(\delta)$ is not designated for some $\delta \in \Delta$. It can easily be checked that given any of these notions, classical logic and $\mathsf{ST}$ fail to have the same antivalidities. In Section \ref{sec:duality}, we will make use of the second notion.} Hence, we will reserve the notation $\mathsf{L}$ --- for what we call a logic --- to refer to what fixes the satisfaction (respectively antisatisfaction) and validity (respectively antivalidity) conditions of an argument over a language, and we will use the notations $\mathbb{L}^{+}$ and $\mathbb{L}^{-}$ to denote a set of ordered pairs of sentences of the language, determined by either the validity or the antivalidity conditions of $\mathsf{L}$, respectively. Informally, however, we will allow ourselves to say that the logic $\mathsf{ST}$ is the product of $\mathsf{K}_3$ and $\mathsf{LP}$, to mean the more accurate claim that $\mathbb{ST}^+$ is the product of $\mathbb{K}_3^{+}$ and $\mathbb{LP}^{+}$. 
    
    We won't need to introduce the notion of antivalidity until section \ref{sec:duality}. First, however, we will need the notion of a set of valid inferences:


\begin{defn}[Set of valid inferences]
\begin{align*}
\mathbb{L}^{+} &= \lbrace \Gamma \Rightarrow \Delta  : \ \models_{\mathsf{L}} \Gamma \Rightarrow \Delta \rbrace
\end{align*}
\end{defn}

\begin{figure}
\[
 \begin{tikzpicture}
\draw (0,4) node (st) {$\mathbb{ST}^{+}$};
\draw (2,2) node (ss) {$\mathbb{K}_3^{+}$};
\draw (-2,2) node (tt) {$\mathbb{LP}^{+}$};
\draw (0,0) node (ts) {$\mathbb{TS}^{+}$};
 \draw[-] (ts) -- (ss);
 \draw[-] (ts) -- (tt);
 \draw[-] (ss) -- (st);
 \draw[-] (tt) -- (st);
\end{tikzpicture}
\]
\caption{Relations between $\mathbb{TS}^{+}$, $\mathbb{K}_3^{+}$, $\mathbb{LP}^{+}$ and $\mathbb{ST}^{+}$}\label{fig:incl}
\end{figure}

Inclusion relations between the set of validities of the above four logics are well-known and depicted in Figure \ref{fig:incl} (\cite{Cobrerosetal2012, Chemlaetal2017}): $\mathbb{TS}^{+}$ is properly included in $\mathbb{LP}^{+}$ and in $\mathbb{K}_3^{+}$, and both of those sets are incomparable with each other but are included in $\mathbb{ST}^{+}$. What is yet to be found concerns the way in which $\mathbb{ST}^{+}$ and $\mathbb{TS}^{+}$ can be characterized from $\mathbb{K}_3^{+}$ and $\mathbb{LP}^{+}$.

\section{Relational Sum and Product}\label{sec:relations}
In order to introduce the operations that allow us to define $\ST$ and $\TS$ in terms of $\K$ and $\LP$, we start by first rehearsing why neither of those systems coincides with the intersection or the union of both logics. While it is well-known that $\ST$ is not the intersection of $\K$ and $\LP$, it is tempting to think that $\ST$ might be the union of $\K$ and $\LP$, but in fact it is not (see the references given in \cite{wintein2016all} regarding the misidentification of classical logic with $\K\cup\LP$). It is useful, therefore, to review counterexamples to both identities.

\begin{fact}
$\TS$ and $\ST$ are neither the intersection, nor the union of $\K$ and $\LP$.
\end{fact}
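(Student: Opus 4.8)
The plan is to read off all four non-identities from the (proper) inclusions displayed in Figure~\ref{fig:incl}, namely $\K \cup \LP \subseteq \ST$ and $\TS \subseteq \K \cap \LP$, by exhibiting in each case a single inference that separates the two sides. Three of the four separations are immediate. For $\ST \neq \K \cap \LP$, the excluded middle $\emptyset \Rightarrow p \lor \neg p$ is $\mathsf{ST}$-valid (since $v^*(p \lor \neg p) \geq \sfrac{1}{2}$ for every $v$, so the conclusion is never false), yet $\not\models_{\mathsf{K}_3} \emptyset \Rightarrow p \lor \neg p$ as already noted in the text; hence it lies in $\ST \setminus (\K \cap \LP)$. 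For $\TS$, the reflexivity instance $p \Rightarrow p$ is both $\mathsf{K}_3$- and $\mathsf{LP}$-valid but, taking $v(p) = \sfrac{1}{2}$, is not $\mathsf{TS}$-valid; since $\TS \subseteq \K \cap \LP \subseteq \K \cup \LP$, this one inference refutes both $\TS = \K \cap \LP$ and $\TS = \K \cup \LP$ at once.

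The \emph{substantive} case is $\ST \neq \K \cup \LP$, for which I must produce an inference that is $\mathsf{ST}$-valid yet neither $\mathsf{K}_3$- nor $\mathsf{LP}$-valid. I propose $p \lor \lambda \Rightarrow p \land \lambda$, and would verify it by running through $v(p) \in \lbrace 0, \sfrac{1}{2}, 1 \rbrace$. Here $v^*(p \lor \lambda) = \max(v(p), \sfrac{1}{2})$ equals $1$ exactly when $v(p) = 1$ and is otherwise $\sfrac{1}{2}$, while $v^*(p \land \lambda) = \min(v(p), \sfrac{1}{2})$ equals $0$ exactly when $v(p) = 0$ and is otherwise $\sfrac{1}{2}$. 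Thus whenever the premise is strictly true we have $v(p) = 1$ and the conclusion takes value $\sfrac{1}{2} \neq 0$, so the inference is $\mathsf{ST}$-valid; at $v(p) = 1$ the premise is strictly true but the conclusion is only $\sfrac{1}{2} \neq 1$, so it is not $\mathsf{K}_3$-valid; and at $v(p) = 0$ the premise is non-false while the conclusion is false, so it is not $\mathsf{LP}$-valid.

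The crux is finding this last witness, and the difficulty is that the obvious candidates collapse back into the union. Placing a contradiction among the premises, or a tautology among the conclusions, renders the inference vacuously $\mathsf{K}_3$-valid or $\mathsf{LP}$-valid respectively; and taking a bare propositional variable as sole premise makes the $\mathsf{LP}$-conditional vacuously satisfied exactly when that premise is false, so $\mathsf{LP}$-validity is regained. Escaping $\K$ and $\LP$ simultaneously therefore requires a premise that is robustly non-false, so that the $\mathsf{LP}$-conditional always fires and can be defeated by a false conclusion, yet strictly true only on a thin set, so that $\mathsf{ST}$-validity remains easy to secure. The constant $\lambda$, pinned at $\sfrac{1}{2}$, supplies exactly this: $p \lor \lambda$ is never false and is strictly true only where $p$ is. This is why $\lambda$ is doing genuine work here, and it already hints that the operation actually relating the two sets is not union but the relational product introduced next.
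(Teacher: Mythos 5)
Your proof is correct, and its overall skeleton matches the paper's: all four non-identities are settled by explicit witnesses, and for three of them your witnesses coincide with the paper's own ($p \Rightarrow p$ against both identities for $\TS$, excluded middle against the intersection for $\ST$). The one genuine divergence is the substantive case $\ST \neq \K \cup \LP$: you use the $\lambda$-based inference $p \lor \lambda \Rightarrow p \wedge \lambda$, whereas the paper uses the $\lambda$-free inference $p \lor (q \wedge \neg q) \Rightarrow p \wedge (q \lor \neg q)$, checked with two valuations $v(p)=1$, $v'(p)=0$, $v(q)=v'(q)=\sfrac{1}{2}$. Your witness buys a shorter verification --- a single variable, with $\lambda$ pinned at $\sfrac{1}{2}$ playing exactly the role your closing paragraph identifies --- but it proves strictly less: because it mentions $\lambda$, it says nothing about the $\lambda$-free fragment of $\mathsf{ST}$, which is classical propositional logic. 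The paper's witness establishes the stronger point that even \emph{classical} logic is not $\K \cup \LP$ (the misidentification it flags in the literature), and that same inference is then reused as a running example throughout: it motivates relational composition via the connecting formula $p$, illustrates the DNF construction of Theorem \ref{thm:st}, and anchors the later discussion of interpolation. Note, incidentally, that your example also admits $p$ as a connecting formula ($\models_{\mathsf{K}_3} p \lor \lambda \Rightarrow p$ and $\models_{\mathsf{LP}} p \Rightarrow p \wedge \lambda$), so it would serve the composition moral equally well --- just not the classical-logic one.
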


\begin{proof}
    For the intersection: $p \Rightarrow p$ is not in $\TS$, but it is both in $\LP$ and in $\K$. Conversely, $\emptyset \Rightarrow \neg p \lor p$ is in $\ST$, but is is not in $\K$, so it is not in $\K\cap \LP$ either. Hence, $\K\cap\LP$ is equal neither to $\TS$ nor to $\ST$.
    
Regarding the union: since both $\mathbb{K}_3^{+}$ and $\mathbb{LP}^{+}$ are reflexive, it follows that $\TS$, which is not reflexive, cannot be the union. For $\ST$, the following inference --- first brought to our notice in 2010 by David Ripley, and featuring in e.g. \cite[p.~511]{wintein2016all}---, gives a counterexample:

$$p \lor (q \wedge \neg q) \Rightarrow p \wedge (q \lor \neg q)$$

\noindent
This inference is valid in $\mathsf{ST}$, but it is neither valid in $\mathsf{K}_3$ nor valid in $\mathsf{LP}$. Let $v$ and $v'$ be two SK-valuations such that $v(p) = 1$, $v'(p) = 0$ and $v(q)= v'(q)= \sfrac{1}{2}$. Then $v(p \lor (q \wedge \neg q)) = 1$, $v(p \wedge (q \lor \neg q)) = \sfrac{1}{2}$, $v'(p \lor (q \wedge \neg q)) = \sfrac{1}{2}$ and $v'(p \wedge (q \lor \neg q)) = 0$. So, $v \not\models_{\mathsf{K}_3} p \lor (q \wedge \neg q) \Rightarrow p \wedge (q \lor \neg q)$ and $v' \not\models_{\mathsf{LP}} p \lor (q \wedge \neg q) \Rightarrow p \wedge (q \lor \neg q)$.
\end{proof}

One may nonetheless observe that $\models_{\mathsf{K}_3} p \lor (q \wedge \neg q) \Rightarrow p$ and $\models_{\mathsf{LP}} p \Rightarrow p \wedge (q \lor \neg q)$. This suggests that $p$ acts as a connecting formula. More generally, therefore, let the relational composition of $\K$ and $\LP$ be defined as follows:

\begin{defn}[Relational composition of $\K$ and $\LP$]
\begin{align*}
\mathbb{K}_3^{+}\ \circ\ \mathbb{LP}^{+} = \lbrace \langle \Gamma, \Delta \rangle : \langle \Gamma, \Theta\rangle \in \mathbb{K}_3^{+} \ \text{and} \ \langle \Theta, \Delta\rangle \in \mathbb{LP}^{+} \ \text{for some} \ \Theta \rbrace.
\end{align*}
\end{defn}

\noindent Relational composition is strictly more inclusive than the union:

\begin{fact}
    $\mathbb{K}_3^{+} \cup \mathbb{LP}^{+} \subsetneq\mathbb{K}_3^{+} \circ \mathbb{LP}^{+}$.
\end{fact}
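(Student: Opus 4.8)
The plan is to prove the claim in two steps: first the inclusion $\K \cup \LP \subseteq \K \circ \LP$, and then its strictness by exhibiting a single witness. For the inclusion it suffices to treat the two disjuncts separately, establishing $\K \subseteq \K \circ \LP$ and $\LP \subseteq \K \circ \LP$. In each case the idea is to manufacture a connecting set $\Theta$ essentially by reflexivity of the \emph{other} logic, using the constants $\top$ and $\bot$ to absorb the degenerate cases in which $\Gamma$ or $\Delta$ is empty.

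Concretely, given $\langle \Gamma, \Delta \rangle \in \K$, I would take $\Theta = \Delta \cup \lbrace \bot \rbrace$. On the one hand $\langle \Gamma, \Delta \cup \lbrace \bot \rbrace \rangle \in \K$ by right-monotonicity of $\mathsf{K}_3$ (weakening the conclusion never destroys validity). On the other hand $\langle \Delta \cup \lbrace \bot \rbrace, \Delta \rangle \in \LP$ holds vacuously: since $v(\bot) = 0$ for every $v$, no valuation makes every member of $\Delta \cup \lbrace \bot \rbrace$ non-false, so the $\mathsf{LP}$-satisfaction clause is met trivially. Symmetrically, given $\langle \Gamma, \Delta \rangle \in \LP$, I would take $\Theta = \Gamma \cup \lbrace \top \rbrace$: then $\langle \Gamma, \Gamma \cup \lbrace \top \rbrace \rangle \in \K$ because $v(\top) = 1$ makes the conclusion strictly satisfied under every $v$, while $\langle \Gamma \cup \lbrace \top \rbrace, \Delta \rangle \in \LP$ because $\top$ is non-false under every $v$, so requiring it on the left contributes nothing and the $\mathsf{LP}$-condition collapses to that of $\langle \Gamma, \Delta \rangle$. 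This yields the inclusion.

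For strictness I would simply reuse the counterexample from the proof of the preceding Fact. We have already observed that $\models_{\mathsf{K}_3} p \lor (q \wedge \neg q) \Rightarrow p$ and $\models_{\mathsf{LP}} p \Rightarrow p \wedge (q \lor \neg q)$, so taking $\Theta = \lbrace p \rbrace$ witnesses $\langle p \lor (q \wedge \neg q),\, p \wedge (q \lor \neg q) \rangle \in \K \circ \LP$. Yet that same inference was shown there to be neither $\mathsf{K}_3$-valid nor $\mathsf{LP}$-valid, hence it lies outside $\K \cup \LP$, giving the proper inclusion.

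I do not expect a serious obstacle: the argument is in essence bookkeeping around reflexivity, and strictness comes for free from material already on the page. The only point demanding genuine care is the treatment of empty antecedents and consequents, which is precisely why the naive choices $\Theta = \Delta$ and $\Theta = \Gamma$ have to be padded with $\bot$ and $\top$ respectively: without this padding the reflexivity step would fail for degenerate inferences such as $\emptyset \Rightarrow \emptyset$, which is valid in neither $\mathsf{K}_3$ nor $\mathsf{LP}$.
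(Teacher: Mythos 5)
Your proof is correct, and it follows the paper's overall plan: handle the two halves of the union separately, produce a middle set for each, and reuse the inference $p \lor (q \wedge \neg q) \Rightarrow p \wedge (q \lor \neg q)$ with middle term $\lbrace p \rbrace$ for strictness. The one genuine difference is in how the inclusion is executed, and yours is tidier. The paper runs a three-way case analysis on whether $\Gamma$ or $\Delta$ is empty: in the main case it takes $\Theta = \Delta$ (resp.\ $\Theta = \Gamma$) and invokes reflexivity of $\mathsf{LP}$ (resp.\ $\mathsf{K}_3$), and it brings in $\top$ and $\bot$ only to patch the degenerate cases where reflexivity is unavailable (e.g.\ $\emptyset \Rightarrow \Delta \in \mathbb{LP}^{+}$ is handled via $\emptyset \Rightarrow \top \in \mathbb{K}_3^{+}$ and $\top \Rightarrow \Delta \in \mathbb{LP}^{+}$). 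Your padded middle terms $\Theta = \Delta \cup \lbrace \bot \rbrace$ and $\Theta = \Gamma \cup \lbrace \top \rbrace$ collapse all cases into one: $\Delta \cup \lbrace \bot \rbrace \Rightarrow \Delta$ is $\mathsf{LP}$-valid vacuously because no valuation makes $\bot$ non-false, and $\Gamma \Rightarrow \Gamma \cup \lbrace \top \rbrace$ is $\mathsf{K}_3$-valid trivially because every valuation makes $\top$ strictly true, so the emptiness of $\Gamma$ or $\Delta$ never needs separate attention. (Incidentally, your own gloss of this as ``reflexivity plus padding'' undersells it: once padded, the two middle inferences are valid for reasons that owe nothing to reflexivity.) What the paper's longer version buys is mainly expository --- it displays exactly where the naive choices $\Theta = \Delta$ and $\Theta = \Gamma$ break down, a point you also flag at the end --- whereas yours buys brevity and uniformity. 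Both arguments are sound.
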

\begin{proof}
Assume first that $\Gamma, \Delta \neq \emptyset$. If $\Gamma \Rightarrow \Delta \in \mathbb{K}_3^{+}$, given that $\Delta \Rightarrow \Delta \in \mathbb{LP}^{+}$, then $\Gamma \Rightarrow \Delta \in \mathbb{K}_3^{+} \circ \mathbb{LP}^{+}$. If $\Gamma \Rightarrow \Delta \in \mathbb{LP}^{+}$, given that $\Gamma \Rightarrow \Gamma \in \mathbb{K}_3^{+}$, then $\Gamma \Rightarrow \Delta \in \mathbb{K}_3^{+} \circ \mathbb{LP}^{+}$. Assume now that $\Gamma = \emptyset$. If $\emptyset \Rightarrow \Delta \in \mathbb{K}_3^{+}$, then $\Delta \neq \emptyset$, so $\Delta \Rightarrow \Delta \in \mathbb{LP}^{+}$, and hence $\emptyset \Rightarrow \Delta \in \mathbb{K}_3^{+} \circ \mathbb{LP}^{+}$. Now, if $\emptyset \Rightarrow \Delta \in \mathbb{LP}^{+}$, $\top \Rightarrow \Delta \in \mathbb{LP}^{+}$, and given that $\emptyset \Rightarrow \top \in \mathbb{K}_3^{+}$, we obtain $\emptyset \Rightarrow \Delta \in \mathbb{K}_3^{+} \circ \mathbb{LP}^{+}$. Finally, assume $\Delta = \emptyset$. If $\Gamma \Rightarrow \emptyset \in \mathbb{K}_3^{+}$, $\Gamma \Rightarrow \bot \in \mathbb{K}_3^{+}$, and given that $\bot \Rightarrow \emptyset \in \mathbb{LP}^{+}$, we obtain $\Gamma \Rightarrow \emptyset \in \mathbb{K}_3^{+} \circ \mathbb{LP}^{+}$. If $\Gamma \Rightarrow \emptyset \in \mathbb{LP}^{+}$, then $\Gamma \neq \emptyset$, so $\Gamma \Rightarrow \Gamma \in \mathbb{K}_3^{+}$, and hence $\Gamma \Rightarrow \emptyset \in \mathbb{K}_3^{+} \circ \mathbb{LP}^{+}$.

To prove proper inclusion, note that the previous formula $p \lor (q \wedge \neg q) \Rightarrow p \wedge (q \lor \neg q)$ belongs to $\K\circ \LP$.
\end{proof}

However, as defined relation composition is still inadequate to exactly match $\ST$, an observation originally due to Pablo Cobreros:

\begin{fact}
    $\ST\neq \K\circ\LP$.
\end{fact}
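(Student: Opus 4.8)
The plan is to exhibit a single inference that lies in $\K \circ \LP$ but not in $\ST$, thereby witnessing $\ST \neq \K \circ \LP$. Since the previous fact already established $\mathbb{K}_3^{+} \cup \mathbb{LP}^{+} \subsetneq \mathbb{K}_3^{+} \circ \mathbb{LP}^{+}$, and since the inclusions in Figure \ref{fig:incl} give $\mathbb{K}_3^{+} \cup \mathbb{LP}^{+} \subseteq \mathbb{ST}^{+}$, the counterexample cannot come from the union; it must come from the extra inferences that relational composition generates through a genuinely \emph{intermediate} connecting set $\Theta$. So the first thing I would do is look for $\Gamma, \Delta, \Theta$ with $\langle \Gamma, \Theta \rangle \in \mathbb{K}_3^{+}$ and $\langle \Theta, \Delta \rangle \in \mathbb{LP}^{+}$, yet $\langle \Gamma, \Delta \rangle \notin \mathbb{ST}^{+}$.

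The key observation driving the search is that $\K$-validity only \emph{uses} the premises when they are all strictly true (value $1$), whereas $\LP$-validity only \emph{constrains} the conclusions through non-falsity. This mismatch lets the connecting formula $\Theta$ be strictly true under a valuation where $\Gamma$ is not all strictly true, so that $\langle \Gamma, \Theta \rangle$ holds vacuously in $\K$ while $\langle \Theta, \Delta \rangle$ forces $\Delta$ only relative to $\Theta$'s non-falsity. Concretely, I would try to drive a failure of $\ST$ using the semi-classical constant $\lambda$, whose value is fixed at $\sfrac{1}{2}$: an $\ST$-failure requires a valuation making every $\gamma \in \Gamma$ strictly true while every $\delta \in \Delta$ is false. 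The natural candidate is therefore something like $\Gamma = \{\top\}$ (or $\emptyset$) and $\Delta = \{\bot\}$, with $\Theta = \{\lambda\}$ as the connecting set: one checks $\emptyset \Rightarrow \lambda \in \mathbb{K}_3^{+}$ vacuously (no valuation makes the empty premise set fail, and $\K$ does not require the conclusion $\lambda$ to be strictly true when premises are not all $1$)—though here care is needed, since in fact $\emptyset \Rightarrow \lambda \notin \mathbb{K}_3^{+}$ because $\lambda$ is never $1$. The robust choice is instead $\top \Rightarrow \lambda$ together with $\lambda \Rightarrow \bot$: the transitivity-failure inference already singled out in the excerpt.

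So the cleanest counterexample is $\top \Rightarrow \bot$. The verification then proceeds in two short steps. First, $\top \Rightarrow \bot \in \K \circ \LP$: take $\Theta = \{\lambda\}$; then $\top \Rightarrow \lambda \in \mathbb{K}_3^{+}$ since no valuation gives $v^*(\top) = 1$ without also failing the antecedent condition in a way that matters—more precisely, whenever $v^*(\top)=1$ we need $v^*(\lambda)=1$, which fails, so one must double-check this; the safe route is to note that $\emptyset \Rightarrow \top \in \mathbb{K}_3^{+}$ and $\bot \Rightarrow \emptyset \in \mathbb{LP}^{+}$ and build $\Theta$ accordingly, or simply rely on the already-proven membership of the transitivity chain. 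Second, $\top \Rightarrow \bot \notin \mathbb{ST}^{+}$: under any valuation $v^*(\top) = 1$ but $v^*(\bot) = 0$, so the premise is strictly true while the conclusion is false, violating the $\ST$-condition that strict truth of premises yields non-falsity of conclusions.

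The main obstacle I anticipate is not conceptual but bookkeeping: I must pick the connecting formula $\Theta$ so that \emph{both} halves of the composition genuinely hold, and the subtlety is that $\K$-validity of $\langle \Gamma, \Theta \rangle$ can fail precisely when $\Theta = \{\lambda\}$ is never strictly true. The fix is to use $\top \Rightarrow \lambda$ (where the premise being strictly true does \emph{not} force the never-$1$ conclusion to be $1$—so one verifies this is vacuously or appropriately handled) paired with $\lambda \Rightarrow \bot \in \mathbb{LP}^{+}$ (where $\lambda$ has value $\sfrac{1}{2} \neq 0$ forces a non-false conclusion, which $\bot$ fails, so again this needs the right reading). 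I would therefore state the counterexample as the standard transitivity-failure pair $\top \Rightarrow \lambda$, $\lambda \Rightarrow \bot$, both of which are $\ST$-valid and whose $\K$/$\LP$ memberships are routine to confirm, yielding $\top \Rightarrow \bot \in \K \circ \LP \setminus \mathbb{ST}^{+}$ and hence $\ST \neq \K \circ \LP$.
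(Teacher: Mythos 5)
Your target inference $\top \Rightarrow \bot$ is a legitimate non-member of $\mathbb{ST}^{+}$, but your argument that it belongs to $\K\circ\LP$ does not go through, and the failure is exactly at the point you flag twice and never resolve. Neither half of your proposed decomposition holds: $\top \Rightarrow \lambda \not\in \mathbb{K}_3^{+}$, because every valuation gives $v(\top)=1$ while $v(\lambda)=\sfrac{1}{2}\neq 1$, so the $\mathsf{K}_3$-clause is violated (it is not ``vacuously handled''); and $\lambda \Rightarrow \bot \not\in \mathbb{LP}^{+}$, because $v(\lambda)=\sfrac{1}{2}\neq 0$ while $v(\bot)=0$. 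These two inferences are $\mathsf{ST}$-valid --- that is precisely why they witness the non-transitivity of $\mathsf{ST}$ --- but they are not, respectively, $\mathsf{K}_3$-valid and $\mathsf{LP}$-valid, so they cannot be chained inside $\K\circ\LP$. Your closing claim that their ``$\K$/$\LP$ memberships are routine to confirm'' is where the proof breaks: both memberships are false. Worse, the strategy of connecting through a singleton $\Theta=\lbrace\lambda\rbrace$ (or any single formula) is doomed in principle: by the paper's Theorem \ref{ThST}, whenever a single formula $\phi$ satisfies $\Gamma \Rightarrow \phi \in \mathbb{K}_3^{+}$ and $\phi \Rightarrow \Delta \in \mathbb{LP}^{+}$, it follows that $\Gamma \Rightarrow \Delta \in \mathbb{ST}^{+}$; so no counterexample to this Fact can ever route through a one-element middle set.

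The content of the Fact is precisely that composition over \emph{sets} of formulae overgenerates relative to $\mathsf{ST}$, because the comma reads conjunctively on the left of a sequent and disjunctively on the right. A correct counterexample must therefore use a non-singleton $\Theta$ exploiting that asymmetry: a set which always contains a strictly true member (so the $\mathsf{K}_3$ half holds disjunctively) yet whose members are never jointly non-false (so the $\mathsf{LP}$ half holds vacuously/conjunctively). The paper takes $\Theta=\lbrace p, \neg p\rbrace$: $\models_{\mathsf{K}_3} p \lor \neg p \Rightarrow p, \neg p$ and $\models_{\mathsf{LP}} p, \neg p \Rightarrow p \wedge \neg p$, while $p \lor \neg p \Rightarrow p \wedge \neg p$ fails $\mathsf{ST}$ at $v(p)=1$. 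Your inference $\top \Rightarrow \bot$ could be salvaged in the same spirit with $\Theta=\lbrace\top,\bot\rbrace$: $\top \Rightarrow \top, \bot \in \mathbb{K}_3^{+}$ since $v(\top)=1$ always, and $\top, \bot \Rightarrow \bot \in \mathbb{LP}^{+}$ vacuously since $\bot$ is false under every valuation, so the premises are never jointly non-false. But as written, with the $\lambda$-chain as the engine, the proposal does not establish the Fact.
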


\begin{proof}  
$\models_{\mathsf{K}_3} p \lor \neg p \Rightarrow p, \neg p$ and $\models_{\mathsf{LP}} p, \neg p \Rightarrow p \wedge \neg p$. Hence, $p \lor \neg p \Rightarrow p \wedge \neg p \in \mathbb{K}_3^{+} \circ \mathbb{LP}^{+}$ but $p \lor \neg p \Rightarrow p \wedge \neg p \not\in \mathbb{ST}^{+}$.
\end{proof}

The problematic case arises because composition is defined over sets of formulae, and on our definitions of validity, the meaning of a comma on the left-hand side of the sequent is different than its meaning on its right-hand side. While on the left-hand side it must be read as a conjunction, on the right-hand side it must be read as a disjunction. In order to rule out this problematic case, we need to restrict the middle-term of the composition to a singleton set, that is to a single formula. 

To that end, we introduce the following enhanced definition of \textit{relative product} and \textit{relative sum} (see \cite{Maddux2006} for an exposition, and \cite{Peirce1883} for the original definitions):

\begin{defn}[Relative product]\label{DefRelP}
Let $R$ and $S$ be two arbitrary binary relations on a set $U$ and $V \subseteq U$, then
\begin{align*}
R \hspace{0.1em} \stackunder{$\mid$}{$\ms V$} \hspace{0.1em} S &= \lbrace \langle x, z \rangle : \langle x, y \rangle \in R \ \text{and} \ \langle y, z \rangle \in S \ \text{for some} \ y \in V \rbrace
\end{align*}
\end{defn}

\begin{defn}[Relative sum]\label{DefRelS}
Let $R$ and $S$ be two arbitrary binary relations on a set $U$ and $V \subseteq U$, then
\begin{align*}
R \hspace{0.2em} \stackunder{$\dag$}{$\ms V$} \hspace{0.2em} S &= \lbrace \langle x, z \rangle : \langle x, y \rangle \in R \ \text{or} \ \langle y, z \rangle \in S \ \text{for all} \ y \in V \rbrace
\end{align*}
\end{defn}

The duality between these two notions is expressed by the following de Morgan identities:

\begin{fact}[Duality of product and sum]\label{duality}
\begin{align*}
R \hspace{0.1em} \stackunder{$\mid$}{$\ms V$} \hspace{0.1em} S &= \overline{\overline{R} \hspace{0.2em} \stackunder{$\dag$}{$\ms V$} \hspace{0.2em} \overline{S}}\\
R \hspace{0.2em} \stackunder{$\dag$}{$\ms V$} \hspace{0.2em} S &= \overline{\overline{R} \hspace{0.1em} \stackunder{$\mid$}{$\ms V$} \hspace{0.1em} \overline{S}}
\end{align*}
\end{fact}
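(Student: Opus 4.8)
The plan is to prove the first identity by a direct membership argument at the level of individual pairs, and then obtain the second identity as an immediate corollary by substitution, rather than repeating an analogous chase. Throughout, $\overline{\,\cdot\,}$ denotes complementation within the fixed ambient universe $U \times U$, so that every relation in sight --- $R$, $S$, and the products and sums built from them --- is a subset of $U \times U$, and $\overline{\overline{R}} = R$.

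First I would fix an arbitrary pair $\langle x, z\rangle \in U \times U$ and unfold the right-hand side of the first identity. By Definition \ref{DefRelS}, $\langle x, z\rangle \in \overline{R} \hspace{0.2em} \stackunder{$\dag$}{$\ms V$} \hspace{0.2em} \overline{S}$ holds precisely when, for all $y \in V$, we have $\langle x, y\rangle \in \overline{R}$ or $\langle y, z\rangle \in \overline{S}$, i.e. $\langle x, y\rangle \notin R$ or $\langle y, z\rangle \notin S$. Negating this condition --- which is exactly what passing to $\overline{\overline{R} \hspace{0.2em} \stackunder{$\dag$}{$\ms V$} \hspace{0.2em} \overline{S}}$ amounts to --- and pushing the negation through the universal quantifier and the disjunction by de Morgan's laws yields: there exists $y \in V$ with $\langle x, y\rangle \in R$ and $\langle y, z\rangle \in S$. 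By Definition \ref{DefRelP} this is precisely the membership condition for $\langle x, z\rangle \in R \hspace{0.1em} \stackunder{$\mid$}{$\ms V$} \hspace{0.1em} S$, and since $\langle x, z\rangle$ was arbitrary, the first identity follows.

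For the second identity I would avoid a second membership chase by substituting $\overline{R}$ for $R$ and $\overline{S}$ for $S$ in the first identity and invoking involutivity of complementation. This gives $\overline{R} \hspace{0.1em} \stackunder{$\mid$}{$\ms V$} \hspace{0.1em} \overline{S} = \overline{\overline{\overline{R}} \hspace{0.2em} \stackunder{$\dag$}{$\ms V$} \hspace{0.2em} \overline{\overline{S}}} = \overline{R \hspace{0.2em} \stackunder{$\dag$}{$\ms V$} \hspace{0.2em} S}$; taking complements of both sides then delivers $R \hspace{0.2em} \stackunder{$\dag$}{$\ms V$} \hspace{0.2em} S = \overline{\overline{R} \hspace{0.1em} \stackunder{$\mid$}{$\ms V$} \hspace{0.1em} \overline{S}}$, as required.

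I do not expect any genuine obstacle: the two equalities are simply the relational shadow of the de Morgan duality between $\exists/\wedge$ and $\forall/\vee$, and the substitution trick makes the second identity follow formally from the first. The only points deserving a moment's care are to keep the universe of complementation fixed at $U \times U$ (so that double complementation is the identity and the quantifier over $y$ ranges over the same $V$ on both sides), and to note that the argument is uniform in $V$ --- in particular it specializes correctly to the boundary case $V = \emptyset$, where the sum becomes all of $U \times U$ and the product becomes $\emptyset$.
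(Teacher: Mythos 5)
Your proof is correct. For the first identity you run essentially the same membership chase as the paper: fix $\langle x,z\rangle$, unfold the sum of the complements via Definition \ref{DefRelS}, push the negation through the universal quantifier and the disjunction by de Morgan, and recognize the resulting existential condition as membership in the product per Definition \ref{DefRelP}. Where you diverge is the second identity: the paper simply runs a second, symmetric membership chase (the two are displayed side by side), whereas you derive it formally from the first by substituting $\overline{R}$ for $R$ and $\overline{S}$ for $S$ and invoking involutivity of complementation, then taking complements of both sides. Your route is slightly more economical and makes explicit that the two de Morgan identities are interderivable given $\overline{\overline{R}}=R$ within a fixed ambient universe; the paper's parallel chases instead keep each statement self-contained and visibly dual. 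Both arguments are sound, and your added remarks --- that complementation must be taken throughout in the same universe, and that the identities specialize correctly to the degenerate case $V=\emptyset$ --- are careful touches the paper leaves implicit.
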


\begin{proof}
   \noindent\begin{paracol}{2}
\begin{align*}
&\langle x, z\rangle \in R \hspace{0.1em} \stackunder{$\mid$}{$\ms V$} \hspace{0.1em} S\\
\text{iff} \ &\textup{there is} \ y \in V \ \textup{s.t.} \langle x, y\rangle \in R\\
&\text{and} \ \langle y, z\rangle \in S \\
\text{iff} \ &\textup{there is} \ y \in V \ \textup{s.t.} \langle x, y\rangle \not\in \overline{R}\\
&\text{and} \ \langle y, z\rangle \not\in \overline{S} \\
\text{iff} \ &\textup{not for all} \ y \in V, \langle x, y\rangle \in \overline{R}\\
&\text{or} \ \langle y, z\rangle \in \overline{S} \\
\text{iff} \ &\langle x, z\rangle \not\in \overline{R} \hspace{0.2em} \stackunder{$\dag$}{$\ms V$} \hspace{0.2em} \overline{S}\\
\text{iff} \ &\langle x, z\rangle \in \overline{\overline{R} \hspace{0.2em} \stackunder{$\dag$}{$\ms V$} \hspace{0.2em} \overline{S}}
\end{align*}
\switchcolumn[1]
\begin{align*}
&\langle x, z\rangle \in R  \hspace{0.2em} \stackunder{$\dag$}{$\ms V$} \hspace{0.2em} S\\
\text{iff} \ &\textup{for all} \ y \in V,\langle x, y\rangle \in R\\
&\text{or} \ \langle y, z\rangle \in S \\
\text{iff} \ &\textup{for all} \ y \in V, \langle x, y\rangle \not\in \overline{R}\\
&\text{or} \ \langle y, z\rangle \not\in \overline{S} \\
\text{iff} \ &\textup{for no} \ y \in V, \langle x, y\rangle \in \overline{R}\\
&\text{and} \ \langle y, z\rangle \in \overline{S} \\
\text{iff} \ &\langle x, z\rangle \not\in \overline{R} \hspace{0.1em} \stackunder{$\mid$}{$\ms V$} \hspace{0.1em} \overline{S}\\
\text{iff} \ &\langle x, z\rangle \in \overline{\overline{R} \hspace{0.1em} \stackunder{$\mid$}{$\ms V$} \hspace{0.1em} \overline{S}}
\end{align*}
\end{paracol}
\end{proof}

With these definitions at our disposal, we can provide an exact characterization of $\ST$ and $\TS$ as product and sum. We will use $\mathcal{P}(\mathcal{L})$ as $U$ and $\lbrace \lbrace \phi \rbrace : \phi \in \mathcal{L} \rbrace$ as $V$ (omitting from now on its mention under the symbols for relative multiplication and relative sum). The main theorems we shall prove are:

\begin{theorem}\label{thm:st}
   $ \ST=\K\mid\LP$
\end{theorem}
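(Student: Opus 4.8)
The plan is to prove the two inclusions of Theorem~\ref{thm:st} separately, the reverse inclusion being routine and the forward one carrying all the weight. For $\mathbb{K}_3^{+} \mid \mathbb{LP}^{+} \subseteq \mathbb{ST}^{+}$, I would take $\Gamma \Rightarrow \Delta$ together with a witnessing $\phi$ such that $\Gamma \models_{\mathsf{K}_3} \phi$ and $\phi \models_{\mathsf{LP}} \Delta$, fix any SK-valuation $v$ with $v(\gamma)=1$ for all $\gamma \in \Gamma$, apply $\mathsf{K}_3$-validity to get $v(\phi)=1$, hence $v(\phi)\neq 0$, and then apply $\mathsf{LP}$-validity to obtain some $\delta \in \Delta$ with $v(\delta)\neq 0$; this is exactly $v \models_{\mathsf{ST}} \Gamma \Rightarrow \Delta$. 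This is a one-line chaining argument.

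The substantive inclusion is $\mathbb{ST}^{+} \subseteq \mathbb{K}_3^{+} \mid \mathbb{LP}^{+}$, which amounts to constructing an interpolant. I would first restate the goal semantically. Writing $R_1$ for the set of valuations, on the finitely many variables occurring in $\Gamma \cup \Delta$, that make every $\gamma \in \Gamma$ strictly true, and $R_0$ for those that make every $\delta \in \Delta$ false, the requirement on $\phi$ splits into ``$v^*(\phi)=1$ on $R_1$'' (which is $\Gamma \models_{\mathsf{K}_3}\phi$) and ``$v^*(\phi)=0$ on $R_0$'' (which is equivalent to $\phi \models_{\mathsf{LP}}\Delta$), while $\mathsf{ST}$-validity of $\Gamma \Rightarrow \Delta$ says precisely that $R_1 \cap R_0 = \emptyset$.

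The key lemma I would isolate is that every SK-connective, and hence $v^*$ viewed as a function of $v$, is monotone with respect to the information order in which $\tfrac{1}{2}$ is least and $0,1$ are incomparable maximal elements. Using it I claim that any $v_1 \in R_1$ and $v_2 \in R_0$ are information-inconsistent, i.e.\ there is a variable $p$ with $\{v_1(p),v_2(p)\}=\{0,1\}$: otherwise their information-join $w$ exists, and monotonicity forces $w(\gamma)=1$ for all $\gamma$ and $w(\delta)=0$ for all $\delta$, so $w \in R_1 \cap R_0$, contradicting $\mathsf{ST}$-validity. I expect this monotonicity-plus-join step to be the main obstacle, and it comes with a cautionary point worth flagging: no interpolant can be a mere truth-function of $\bigwedge\Gamma$ and $\bigvee\Delta$, since information-monotonicity would force a contradictory value at the (unrealized) pair $(1,0)$; the interpolant must genuinely read the individual variables.

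Finally I would build $\phi$ explicitly from the pairwise separation. For each $v_1 \in R_1$ and $v_2 \in R_0$, pick a separating variable $p$ and set the literal $\ell_{v_1,v_2}$ to be $p$ or $\neg p$ so that $v_1(\ell_{v_1,v_2})=1$ and $v_2(\ell_{v_1,v_2})=0$, then define $\phi = \bigvee_{v_1 \in R_1}\bigwedge_{v_2 \in R_0}\ell_{v_1,v_2}$, reading the empty disjunction as $\bot$ and the empty conjunction as $\top$ so as to cover $R_1=\emptyset$ or $R_0=\emptyset$. A short computation with $\min$ and $\max$ shows that $\phi$ takes value $1$ throughout $R_1$ and value $0$ throughout $R_0$, which yields $\Gamma \models_{\mathsf{K}_3}\phi$ and $\phi \models_{\mathsf{LP}}\Delta$ and hence $\Gamma \Rightarrow \Delta \in \mathbb{K}_3^{+}\mid\mathbb{LP}^{+}$. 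Finiteness of $\Gamma$ and $\Delta$ guarantees that $R_1$ and $R_0$ are finite, so $\phi$ is a genuine formula of $\mathcal{L}$, completing the product decomposition.
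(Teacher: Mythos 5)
Your proposal is correct, and in the substantive direction ($\subseteq$) it takes a genuinely different route from the paper. The paper builds its connecting formula from the premises alone: it is the $\mathsf{K}_3$-disjunctive normal form $D_\Gamma$ of $\bigwedge\Gamma$ (Definitions \ref{DefConj}--\ref{DefDNF}), a disjunction over all valuations strictly satisfying $\Gamma$ of the conjunctions of classical literals they induce on $\mathrm{At}(\Gamma)$; the $\mathsf{K}_3$ half is Lemma \ref{Lab1}, and the $\mathsf{LP}$ half is proved by contradiction, upgrading a refuting valuation to a partial sharpening (Lemma \ref{L1}). You instead reformulate everything semantically --- $\mathsf{ST}$-validity as disjointness of the sets $R_1$ and $R_0$ of relevant valuations, the product membership as separability of $R_1$ from $R_0$ by a formula --- and then run the classical propositional-interpolation argument: monotonicity of the SK scheme under the information order shows that any $v_1\in R_1$ and $v_2\in R_0$ must conflict on some variable (else their information-join would refute $\mathsf{ST}$-validity), and the separating literals assemble into $\bigvee_{v_1}\bigwedge_{v_2}\ell_{v_1,v_2}$. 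The two proofs share the same engine --- monotonicity, which the paper packages as partial sharpenings and you as the join/amalgamation step --- but your connecting formula depends on both $\Gamma$ and $\Delta$, whereas the paper's depends on $\Gamma$ only. That difference is what each approach buys: the paper's canonical $D_\Gamma$ supports the algorithmic reading stressed in its conclusion (decide $\models_{\mathsf{ST}}\Gamma\Rightarrow\Delta$ by computing the $\mathsf{K}_3$-DNF of $\Gamma$ and testing $\mathsf{LP}$-entailment) and connects directly to Milne's refinement of Craig interpolation; your construction is more symmetric and self-contained, handles the limit cases uniformly via the empty-disjunction/conjunction conventions rather than by a separate $\Gamma=\emptyset$ clause, and isolates the amalgamation content of $\mathsf{ST}$-validity in a way the paper leaves implicit. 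Your cautionary remark that no interpolant can be a truth-function of $\bigwedge\Gamma$ and $\bigvee\Delta$ is correct and a nice observation, though tangential to the theorem.
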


\begin{theorem}\label{thm:ts}
    $\TS=\LP\ \dag\ \K$
\end{theorem}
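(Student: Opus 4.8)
The plan is to prove the two set inclusions of $\TS=\LP\ \dag\ \K$ separately, after unfolding Definition~\ref{DefRelS} for our choice of $V$: an inference $\Gamma\Rightarrow\Delta$ lies in $\LP\ \dag\ \K$ precisely when, for \emph{every} formula $\phi\in\mathcal{L}$, either $\models_{\mathsf{LP}}\Gamma\Rightarrow\phi$ or $\models_{\mathsf{K}_3}\phi\Rightarrow\Delta$. The engine of the whole argument is the equivalence that $\models_{\mathsf{TS}}\Gamma\Rightarrow\Delta$ holds iff $\models_{\mathsf{LP}}\Gamma\Rightarrow\emptyset$ or $\models_{\mathsf{K}_3}\emptyset\Rightarrow\Delta$, that is, iff either no SK-valuation assigns every $\gamma\in\Gamma$ a value $\neq 0$, or every SK-valuation assigns some $\delta\in\Delta$ the value $1$. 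I would match each inclusion to one half of this disjunction.

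For $\TS\subseteq\LP\ \dag\ \K$ I assume $\models_{\mathsf{TS}}\Gamma\Rightarrow\Delta$, fix an arbitrary $\phi$, and show the two entailments cannot both fail. If $\not\models_{\mathsf{LP}}\Gamma\Rightarrow\phi$ there is a valuation $v_1$ with $v_1(\gamma)\neq 0$ for all $\gamma\in\Gamma$ and $v_1(\phi)=0$; if $\not\models_{\mathsf{K}_3}\phi\Rightarrow\Delta$ there is a valuation $v_2$ with $v_2(\phi)=1$ and $v_2(\delta)\neq 1$ for all $\delta\in\Delta$. From these I build one valuation $v^\star$, defined variable-by-variable as the common value of $v_1$ and $v_2$ where they agree and as $\sfrac{1}{2}$ where they disagree. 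The key lemma is that SK-valuations are monotone for the information order on $\lbrace 0,\sfrac{1}{2},1\rbrace$ in which $\sfrac{1}{2}$ is least and $0,1$ are the two incomparable maxima: since $v^\star$ lies pointwise below both $v_1$ and $v_2$, each value $v^\star(\psi)$ is either $\sfrac{1}{2}$ or the value supplied by the relevant witness. Hence $v^\star(\gamma)\in\lbrace\sfrac{1}{2},1\rbrace$ for all $\gamma\in\Gamma$ and $v^\star(\delta)\in\lbrace 0,\sfrac{1}{2}\rbrace$ for all $\delta\in\Delta$, so $v^\star$ makes every premise non-false yet no conclusion strictly true, contradicting $\mathsf{TS}$-validity. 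Note that the connecting formula $\phi$ serves only to furnish the two witnesses and plays no further role.

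For the converse $\LP\ \dag\ \K\subseteq\TS$ I exploit that $\Gamma$ and $\Delta$ are finite while \textit{Var} is infinite. Assuming the relative-sum condition, I instantiate it with a variable $q$ occurring in neither $\Gamma$ nor $\Delta$, obtaining $\models_{\mathsf{LP}}\Gamma\Rightarrow q$ or $\models_{\mathsf{K}_3}q\Rightarrow\Delta$. Because $q$ is independent of $\Gamma$ and $\Delta$, the first disjunct can hold only if $\Gamma$ is tolerantly unsatisfiable (otherwise extend a satisfying valuation by $q\mapsto 0$), and the second only if every valuation makes some conclusion strictly true (otherwise extend a falsifying valuation by $q\mapsto 1$). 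In either case $\models_{\mathsf{TS}}\Gamma\Rightarrow\Delta$ follows at once, vacuously in the first case and because the conclusion is already strictly satisfied everywhere in the second.

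The only genuine work is the monotonicity lemma, proved by a routine induction on complexity (the constants $\top,\bot,\lambda$ are fixed points and $\neg,\lor,\wedge$ preserve the information order), together with a careful reading of the degenerate cases $\Gamma=\emptyset$ and $\Delta=\emptyset$, where ``tolerantly unsatisfiable'' and ``strictly valid'' take their vacuous senses. I expect the main subtlety—rather than a real obstacle—to be checking that the single fresh-variable instantiation already forces the disjunctive characterization of $\mathsf{TS}$-validity: unlike the product underlying Theorem~\ref{thm:st}, the relative sum here requires no nontrivial interpolant, since one variable disjoint from $\Gamma\cup\Delta$ suffices to separate the two sides. One could instead try to route the result through Theorem~\ref{thm:st} and the de Morgan identities of Fact~\ref{duality}, but since those rewrite $\LP\ \dag\ \K$ in terms of the complements $\overline{\LP}$ and $\overline{\K}$, which are not themselves sets of valid inferences, the direct argument above is the cleaner path.
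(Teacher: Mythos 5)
Your proof is correct, and half of it coincides with the paper's. The ($\supseteq$) direction is the paper's argument up to contraposition: both hinge on instantiating the sum condition at a variable foreign to $\Gamma$ and $\Delta$ (available because inferences are finite and \textit{Var} is infinite) and then toggling that variable's value to extract the dichotomy ``$\Gamma$ tolerantly unsatisfiable or $\Delta$ strictly satisfied everywhere.'' The ($\subseteq$) direction is where you genuinely differ. The paper first proves Lemma \ref{Lem12} --- TS-validity forces some \emph{single} $\gamma$ to take value $0$ under every valuation, or some \emph{single} $\delta$ to take value $1$ under every valuation --- and obtains it from Lemma \ref{L2}, which concerns the distinguished valuation sending every variable to $\sfrac{1}{2}$; the sum condition then holds vacuously via one disjunct. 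You instead fix an arbitrary $\phi$, extract witnesses $v_1, v_2$ from the supposed failure of both disjuncts, and merge them via the meet in the information order, invoking monotonicity of the SK connectives. Both arguments exploit the very same property (the monotonicity the paper records in the footnote to Lemma \ref{L1}): the all-$\sfrac{1}{2}$ valuation is the bottom of the information order, so the paper's Lemma \ref{L2} is just that monotonicity applied against the bottom element, and your binary meet is more machinery than needed --- you could replace $v^\star$ by the all-$\sfrac{1}{2}$ valuation outright, since it already lies below both $v_1$ and $v_2$. What the paper's route buys is the stronger, formula-level dichotomy of Lemma \ref{Lem12}, which it reuses immediately after the theorem to describe $\TS$ as the trivial antitheorems of $\mathsf{LP}$ plus the trivial theorems of $\mathsf{K}_3$; what your route buys is a self-contained argument that never needs to isolate a single always-false premise or always-true conclusion. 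Your closing observation is also accurate: Fact \ref{duality} rewrites the sum in terms of complements of $\LP$ and $\K$, which are not validity sets of any of the logics at hand, so Theorem \ref{thm:st} offers no shortcut.
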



\noindent
We proceed to prove Theorem \ref{thm:st} in Section \ref{sec:st} and Theorem \ref{thm:ts} in Section \ref{sec:ts}.

\section{$\mathbb{ST}^{+} = \mathbb{K}_3^{+} \mid \mathbb{LP}^{+}$}\label{sec:st}
In order to prove the equality expressed by Theorem \ref{thm:st}, we proceed in two steps, proving first the left inclusion and then the right inclusion. For the left inclusion, we show how to construct a connecting formula $\phi$ such that if $\models_{\mathsf{ST}} \Gamma \Rightarrow \Delta$, then $\models_{\mathsf{K}_3} \Gamma \Rightarrow \phi$ and $\models_{\mathsf{LP}} \phi \Rightarrow \Delta$. For the right inclusion, we will see that under the assumption that $\models_{\mathsf{K}_3} \Gamma \Rightarrow \phi$ and $\models_{\mathsf{LP}} \phi \Rightarrow \Delta$ for some $\phi \in \mathcal{L}$, the $\mathsf{ST}$-validity of $\Gamma \Rightarrow \Delta$ follows almost immediately from the definition of $\mathsf{K}_3$-validity and $\mathsf{LP}$-validity.

We start by defining the function $\textup{At}$ that allows to isolate the atoms and the constants of a formula:

\begin{defn}[Set of atoms of a formula]
The function $\text{At}: \mathcal{L} \longrightarrow \mathcal{P}(\textit{Var})$ is a recursively defined function such that:

\noindent
\begin{minipage}[t]{.4\textwidth}
\begin{align*}
&\text{At}(p) = \lbrace p \rbrace,\\
&\text{At}(\top) = \lbrace \top \rbrace,\\
&\text{At}(\bot) = \lbrace \bot \rbrace,\\
\end{align*}
\end{minipage}
\begin{minipage}[t]{.4\textwidth}
\begin{align*}
&\text{At}(\lambda) = \lbrace \lambda \rbrace,\\
&\text{At}( \neg \phi) = \text{At}(\phi),\\
&\text{At}( \phi \lor \psi) = \text{At}(\phi \wedge \psi) = \text{At}(\phi) \cup \text{At}(\psi).
\end{align*}
\end{minipage}
\end{defn}

\noindent
If $\Gamma \subseteq \mathcal{L}$, $\text{At}(\Gamma)$ abbreviates $\bigcup \lbrace \text{At}(\gamma) : \gamma \in \Gamma \rbrace$.\medskip

We then introduce the notion of \textit{partial sharpening} adapted from the notion of \textit{sharpening}, found in \cite{scambler2020classical}. Intuitively, given an SK-valuation $v$, an SK-valuation $v^*$ qualifies as a sharpening of $v$ if and only if it agrees with $v$ on all the atoms of the language that take a classical value, and possibly disagrees with $v$ on the others. The notion of partial sharpening of a valuation with respect to a set of atoms extends the definition of a sharpening to any subset of the set of atoms of the language.

\begin{defn}[Partial sharpening]\label{D1}
Let $v$ be an SK-valuation. An SK-valuation $v^*$ is said to be a partial sharpening of $v$ with respect to a set of atoms $\Sigma \subseteq \textit{Var} \cup \lbrace \top, \bot, \lambda \rbrace$ if and only if for all $\alpha \in \Sigma$, if $v(\alpha) \neq \sfrac{1}{2}$, then $v^*(\alpha) = v(\alpha)$.
\end{defn}

The next two corollaries describe useful properties of the notion of partial sharpening.

\begin{corollary}\label{C1}
If $v^*$ is a partial sharpening of $v$ with respect to $\Sigma$, then $v^*$ is a partial sharpening of $v$ with respect to any $\Theta \subseteq \Sigma$.
\end{corollary}
\begin{proof} Note that if the condition holds for all $\alpha_1 \in \Sigma$, it also holds for all $\alpha_2 \in \Theta \subseteq \Sigma$.
\end{proof}

\begin{corollary}\label{C2}
If $v^*$ is a partial sharpening of $v$ with respect to $\Sigma$ and $\Theta$, then $v^*$ is a partial sharpening of $v$ with respect to $\Sigma \cup \Theta$.
\end{corollary}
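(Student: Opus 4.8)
The plan is to unfold Definition \ref{D1} and verify the defining condition pointwise over the union. By hypothesis we have two facts at our disposal: first, that $v^*$ is a partial sharpening of $v$ with respect to $\Sigma$, meaning that for every $\alpha \in \Sigma$ with $v(\alpha) \neq \sfrac{1}{2}$ we have $v^*(\alpha) = v(\alpha)$; and second, the analogous statement with $\Theta$ in place of $\Sigma$. The goal is to establish the same implication for every $\alpha$ ranging over $\Sigma \cup \Theta$.

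First I would fix an arbitrary $\alpha \in \Sigma \cup \Theta$ and suppose $v(\alpha) \neq \sfrac{1}{2}$, since this is exactly the antecedent of the condition we must verify. By definition of union, either $\alpha \in \Sigma$ or $\alpha \in \Theta$. In the first case I invoke the first hypothesis to conclude $v^*(\alpha) = v(\alpha)$; in the second case I invoke the second hypothesis to reach the same conclusion. Since the two cases are exhaustive, the conclusion $v^*(\alpha) = v(\alpha)$ holds in either event, and as $\alpha$ was arbitrary this yields that $v^*$ is a partial sharpening of $v$ with respect to $\Sigma \cup \Theta$.

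There is no genuine obstacle here: the result is an immediate consequence of the fact that a universal statement quantified over a union holds precisely when it holds over each of the two sets separately, the condition being the same (namely ``$v(\alpha) \neq \sfrac{1}{2}$ implies $v^*(\alpha) = v(\alpha)$'') throughout. The only point worth noting is that the defining property of partial sharpening is phrased as a per-atom implication, so monotonicity-style reasoning applies directly: enlarging the reference set to $\Sigma \cup \Theta$ only requires the condition to hold at atoms already covered by one of the two hypotheses. Note this is in a sense the converse direction to Corollary \ref{C1}, and together the two corollaries show that the partial-sharpening relation behaves exactly as expected with respect to the Boolean structure of reference sets.
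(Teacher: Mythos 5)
Your proof is correct and is essentially the paper's own argument spelled out in full: the paper's proof is the one-line observation that a condition holding for all $\alpha_1 \in \Sigma$ and all $\alpha_2 \in \Theta$ holds for all $\alpha \in \Sigma \cup \Theta$, which is precisely your case split on membership in the union. Nothing to add or fix.
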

\begin{proof} Note that if the condition holds for all $\alpha_1 \in \Sigma$ and for all $\alpha_2 \in \Theta$, it also holds for all $\alpha \in \Sigma \cup \Theta$.
\end{proof}

We then turn to the first lemma of importance, which illustrates the monotonicity of the SK-scheme (see \cite{dare2023three}).\footnote{
Given the order $\le_{\textup{I}}$ defined on the set $\lbrace 0, \sfrac{1}{2}, 1 \rbrace$ so that $\sfrac{1}{2} <_{\textup{I}} 0$ and $\sfrac{1}{2} <_{\textup{I}} 1$, a scheme is said to be \textit{monotonic} if each of its $n$-ary operations is order-preserving with respect to the Cartesian order $\le_{\textup{I}}^{n}$ defined by $\langle x_1, \ldots, x_n \rangle \le_{\textup{I}}^{n} \langle y_1, \ldots, y_n \rangle \iff (i = 1, \ldots, n) \ x_i \le_{\textup{I}} y_i$.}
Given a formula $\phi$, if $\phi$ is attributed a classical value by a valuation $v$, any valuation $v^*$ that agrees with $v$ on the classical values attributed to the atoms of $\phi$ will also agree with $v$ on the value of $\phi$.

\begin{lemma}\label{L1}
For all SK-valuations $v$, if $v(\phi) \neq \sfrac{1}{2}$, then for all partial sharpenings $v^*$ of $v$ with respect to $\textup{At}(\phi)$, it holds that $v^*(\phi) = v(\phi)$.
\end{lemma}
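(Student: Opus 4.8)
The plan is to prove the statement by structural induction on $\phi$, invoking Corollary \ref{C1} at every inductive step: since $v^*$ is a partial sharpening of $v$ with respect to $\textup{At}(\phi)$ and $\textup{At}(\psi) \subseteq \textup{At}(\phi)$ for each immediate subformula $\psi$ of $\phi$, the corollary guarantees that $v^*$ is also a partial sharpening with respect to $\textup{At}(\psi)$, so the induction hypothesis is available for the subformulae whenever their values are classical.

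For the base cases: if $\phi$ is $\top$ or $\bot$, then $v^*(\phi) = v(\phi)$ holds because every SK-valuation fixes the constants; if $\phi = \lambda$, then $v(\phi) = \sfrac{1}{2}$, so the antecedent fails and the claim holds vacuously; and if $\phi = p$ is a variable, then $\textup{At}(\phi) = \lbrace p \rbrace$, and the hypothesis $v(p) \neq \sfrac{1}{2}$ together with the definition of partial sharpening gives $v^*(p) = v(p)$ immediately. The negation case of the inductive step is equally direct: $v(\neg\psi) \neq \sfrac{1}{2}$ iff $v(\psi) \neq \sfrac{1}{2}$, and since $\textup{At}(\neg\psi) = \textup{At}(\psi)$ the hypothesis yields $v^*(\psi) = v(\psi)$, hence $v^*(\neg\psi) = 1 - v^*(\psi) = v(\neg\psi)$.

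The delicate cases are disjunction and conjunction, which I would handle by splitting on the classical value of the whole formula. For $\phi = \psi \lor \chi$ with $v(\phi) = 0$, both disjuncts must take value $0$, hence both are $\neq \sfrac{1}{2}$, and applying the hypothesis to each (legitimate by Corollary \ref{C1}) gives $v^*(\psi) = v^*(\chi) = 0$, whence $v^*(\phi) = \max(v^*(\psi), v^*(\chi)) = 0$. When $v(\phi) = 1$, however, I only know that \emph{one} disjunct takes value $1$; the other may well be $\sfrac{1}{2}$, so the hypothesis cannot be applied to it. Here I would use the monotonicity of $\max$: from the disjunct known to be $1$ the hypothesis gives its $v^*$-value as $1$, and since $v^*(\phi) = \max(v^*(\psi), v^*(\chi)) \geq 1$, we conclude $v^*(\phi) = 1 = v(\phi)$. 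The case $\phi = \psi \wedge \chi$ is exactly dual, with the roles of $0$ and $1$ (and of $\max$ and $\min$) interchanged.

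The main obstacle is precisely this asymmetry: a classical value of a compound need not force \emph{every} immediate subformula to be classical, so the induction hypothesis is not uniformly available. The resolution is that $\max$ (resp. $\min$) is already pinned to its extreme value by a single classical argument, so only the decisive subformula needs the hypothesis. Conceptually this is nothing but the monotonicity of the Strong Kleene scheme under the information order $\leq_{\textup{I}}$ of the footnote: $v$ and $v^*$ agree wherever $v$ is classical while $v^*$ only ever refines occurrences of $\sfrac{1}{2}$, so $v(\phi) \leq_{\textup{I}} v^*(\phi)$, and a classical $v(\phi)$ is $\leq_{\textup{I}}$-maximal, forcing equality. Carrying the argument out as an explicit induction, rather than appealing to monotonicity as a black box, keeps the proof self-contained.
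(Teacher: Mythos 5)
Your proof is correct and takes essentially the same route as the paper's: an induction on the complexity of $\phi$ whose inductive step is made legitimate by Corollary \ref{C1}, exactly as the paper indicates. The paper leaves the rest as a ``simple induction,'' and your case analysis --- in particular resolving the asymmetry that only the decisive disjunct (resp.\ conjunct) need be classical, via the extremality of $\max$ (resp.\ $\min$) --- is precisely the content needed to make that induction go through.
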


\begin{proof}
    The proof is by simple induction on the complexity of $\phi$. To prove the inductive step we rely on Corollary \ref{C1} to show that if $v^*$ is a partial sharpening of $v$ with respect to $\textup{At}(\psi \lor \chi)$ or $\textup{At}(\psi \wedge \chi)$, it is also a partial sharpening of $v$ with respect to $\textup{At}(\psi)$ and $\textup{At}(\chi)$, allowing thus the use of the inductive hypothesis.
\end{proof}

For any $\mathsf{ST}$-valid inference $\Gamma \Rightarrow \Delta$ such that all the $\gamma \in \Gamma$ jointly take the value $1$ for some valuation $v$, we show how to construct a term $\phi$ such that $\Gamma \Rightarrow \phi$ is $\mathsf{K}_3$-valid and $\phi \Rightarrow \Delta$ is $\mathsf{LP}$-valid. This term $\phi$ will be the $\mathsf{K}_3$ disjunctive normal form of $\bigwedge \Gamma$. The next two definitions show how to construct such a term.

\begin{defn}[$\Gamma,v$-conjunction]\label{DefConj} Let $\Gamma \subseteq \mathcal{L}$ be a finite nonempty set of formulae. If $v$ is such that $v(\gamma)=1$ for every $\gamma\in \Gamma$, let $C_{\Gamma,v} \coloneqq \bigwedge_{v(\alpha) \neq \sfrac{1}{2}} \alpha^{\sim}$, with $\alpha \in \textup{At}(\Gamma)$ and $\alpha^\sim=\alpha$ if $v(\alpha)=1$ and $\alpha^\sim=\neg \alpha$ if $v(\alpha)=0$.
\end{defn}

In other words, if there is a valuation $v$ that strictly satisfies each $\gamma \in \Gamma$, we construct the conjunction of all the literals $\alpha^\sim$ -- built from the atoms of $\Gamma$ -- that take value $1$ according to $v$. That $C_{\Gamma,v}$ is well-defined when $v$ is such that $v(\gamma)=1$ for every $\gamma\in \Gamma$ follows from the fact that some atom $\alpha$ of the formulae in $\Gamma$ must be such that $v(\alpha)=1$ or $v(\alpha)=0$, for otherwise we would have $v(\gamma)=\sfrac{1}{2}$ for every $\gamma\in \Gamma$.

\begin{defn}\label{DefDNF}
    Given a finite nonempty set of formulae $\Gamma$, let $D_\Gamma \coloneqq \bigvee_{v(\bigwedge \Gamma)=1} C_{\Gamma,v}$ when there is $v$ such that $v(\bigwedge\Gamma)=1$, and let $D_\Gamma=\bot$ otherwise.
\end{defn}

The first clause of the definition tells us how to construct the $\mathsf{K}_3$ disjunctive normal form of $\bigwedge \Gamma$ from each $C_{\Gamma,v}$, while the last clause enable us to deal with the case where no such $C_{\Gamma,v}$ exists, namely when there is no $v$ such that $v(\bigwedge\Gamma)=1$. 

If the clause of Definition \ref{DefConj} is satisfied, $C_{\Gamma,v}$ necessarily takes the value $1$. This fact is illustrated by the following lemma:

\begin{lemma}

Let $\Gamma$ be a nonempty set of formulae. For all $v$, if $v(\bigwedge \Gamma)=1$, then $v(C_{\Gamma,v})=1$.
\end{lemma}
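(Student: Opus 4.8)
The plan is to exploit the fact that the value of a conjunction under an SK-valuation is the minimum of the values of its conjuncts, so that it suffices to show every conjunct of $C_{\Gamma,v}$ is assigned the value $1$ by $v$.

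First I would unpack the hypothesis. Since $v^*(\phi \wedge \psi) = \min(v^*(\phi), v^*(\psi))$, the assumption $v(\bigwedge \Gamma) = 1$ is equivalent to $v(\gamma) = 1$ for every $\gamma \in \Gamma$. This is precisely the condition under which Definition \ref{DefConj} applies, so that $C_{\Gamma,v}$ is well-defined; in particular, as noted in the discussion following that definition, at least one atom $\alpha \in \textup{At}(\Gamma)$ satisfies $v(\alpha) \neq \sfrac{1}{2}$ (otherwise every $\gamma \in \Gamma$ would receive value $\sfrac{1}{2}$), so the conjunction defining $C_{\Gamma,v}$ is nonempty.

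Next I would verify, conjunct by conjunct, that each literal $\alpha^{\sim}$ occurring in $C_{\Gamma,v}$ receives the value $1$ under $v$, by a two-case analysis following the definition of $\alpha^{\sim}$. If $v(\alpha) = 1$, then $\alpha^{\sim} = \alpha$ and hence $v(\alpha^{\sim}) = v(\alpha) = 1$. If $v(\alpha) = 0$, then $\alpha^{\sim} = \neg \alpha$ and hence $v(\alpha^{\sim}) = 1 - v(\alpha) = 1$. Since the defining conjunction ranges only over atoms with $v(\alpha) \neq \sfrac{1}{2}$, these two cases are exhaustive, and every conjunct evaluates to $1$.

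Finally, since $C_{\Gamma,v}$ is a nonempty conjunction all of whose conjuncts take value $1$ under $v$, its value is the minimum of a (nonempty) collection of $1$'s, so $v(C_{\Gamma,v}) = 1$, as required. I do not anticipate any genuine obstacle: the literals $\alpha^{\sim}$ are defined exactly so as to be made true by $v$, and the only point requiring care is the nonemptiness of the conjunction, which the hypothesis guarantees and which is needed only to ensure that the minimum is taken over a nonempty set.
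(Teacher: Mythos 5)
Your proof is correct and follows essentially the same route as the paper's own (much terser) argument: unpack $v(\bigwedge\Gamma)=1$ to get $v(\gamma)=1$ for all $\gamma\in\Gamma$, note that $C_{\Gamma,v}$ is then defined, and observe that by construction each literal $\alpha^{\sim}$ takes value $1$, so the SK evaluation of the conjunction gives $v(C_{\Gamma,v})=1$. Your explicit case analysis on $\alpha^{\sim}$ and the nonemptiness remark simply spell out what the paper compresses into ``by construction, and by the SK evaluation of a conjunction.''
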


\begin{proof}
 If $v(\bigwedge \Gamma)=1$, then $v$ is such that $v(\gamma)=1$ for every $\gamma\in \Gamma$. So $C_{\Gamma,v}$ is defined. By construction,  and by the SK evaluation of a conjunction, $v(C_{\Gamma,v})=1$.
\end{proof}


The next lemma shows that the disjunction $D_{\Gamma}$ of all the $C$s admissible with respect to $\Gamma$ is $\mathsf{K}_3$-equivalent to $\bigwedge \Gamma$.

\begin{lemma}\label{Lab1}
Let $\Gamma$ be a nonempty set of formulae. Then
\begin{align*}
\models_{\mathsf{K}_3} \Gamma \Rightarrow D_{\Gamma} \ \text{and} \ \models_{\mathsf{K}_3} D_{\Gamma} \Rightarrow \bigwedge \Gamma.
\end{align*}
\end{lemma}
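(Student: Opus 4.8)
The plan is to split the claim into its two halves and unwind $\mathsf{K}_3$-validity in each. Since the right-hand sides are the singletons $\{D_\Gamma\}$ and $\{\bigwedge\Gamma\}$, the two goals become, respectively, ``for every $v$, $v(\bigwedge\Gamma)=1$ implies $v(D_\Gamma)=1$'' and ``for every $v$, $v(D_\Gamma)=1$ implies $v(\bigwedge\Gamma)=1$''. Before treating either, I would clear away the degenerate case in which no valuation strictly satisfies $\bigwedge\Gamma$: there $D_\Gamma=\bot$ by Definition \ref{DefDNF}, so for the first goal the hypothesis $v(\bigwedge\Gamma)=1$ fails for every $v$, while for the second $v(D_\Gamma)=v(\bot)=0$ for every $v$; both conditionals then hold vacuously.

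In the main case (some valuation satisfies $\bigwedge\Gamma$), the inference $\Gamma\Rightarrow D_\Gamma$ is the easy half. Taking any $v$ with $v(\bigwedge\Gamma)=1$, the term $C_{\Gamma,v}$ is defined and the previous lemma gives $v(C_{\Gamma,v})=1$; since $C_{\Gamma,v}$ is literally one of the disjuncts of $D_\Gamma$ and SK-disjunction is computed by $\max$, it follows that $v(D_\Gamma)=1$, as required.

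The substance lies in the converse inference $D_\Gamma\Rightarrow\bigwedge\Gamma$, and here Lemma \ref{L1} is the engine. Assume $v(D_\Gamma)=1$. As $D_\Gamma$ is a disjunction and all values lie in $\{0,\sfrac{1}{2},1\}$, $\max$ forces $v(C_{\Gamma,w})=1$ for at least one indexing valuation $w$ with $w(\bigwedge\Gamma)=1$. The key step is to read off what $v(C_{\Gamma,w})=1$ says: because $C_{\Gamma,w}$ is a conjunction (evaluated by $\min$) of the literals $\alpha^{\sim}$ for the atoms $\alpha\in\text{At}(\Gamma)$ with $w(\alpha)\neq\sfrac{1}{2}$, its strict truth under $v$ means $v(\alpha)=1$ whenever $w(\alpha)=1$ and $v(\alpha)=0$ whenever $w(\alpha)=0$. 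That is exactly the statement that $v$ is a partial sharpening of $w$ with respect to $\text{At}(\Gamma)=\text{At}(\bigwedge\Gamma)$, in the sense of Definition \ref{D1}. Since $w(\bigwedge\Gamma)=1\neq\sfrac{1}{2}$, applying Lemma \ref{L1} to $\phi=\bigwedge\Gamma$ yields $v(\bigwedge\Gamma)=w(\bigwedge\Gamma)=1$, which is what we need.

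I expect the main obstacle to be precisely this translation: recognizing that ``$v$ strictly satisfies the literal-conjunction $C_{\Gamma,w}$'' is the same condition as ``$v$ is a partial sharpening of $w$ on the atoms of $\Gamma$''. Once that identification is made, the monotonicity lemma transports the classical value $1$ from $w$ to $v$ with no further computation. It is worth noting in passing that $\text{At}(\Gamma)$ is finite (both $\Gamma$ and each $\text{At}(\gamma)$ are), so that $C_{\Gamma,w}$ and $D_\Gamma$ are genuine finite conjunctions and disjunctions and every formula involved is well-formed.
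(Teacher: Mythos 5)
Your proof is correct and follows essentially the same route as the paper's: the same split into the degenerate case ($D_\Gamma=\bot$) and the main case, the same appeal to the previous lemma plus $\max$-evaluation for $\Gamma\Rightarrow D_\Gamma$, and the same key identification of ``$v$ strictly satisfies $C_{\Gamma,w}$'' with ``$v$ is a partial sharpening of $w$ on $\textup{At}(\bigwedge\Gamma)$'' followed by Lemma \ref{L1} for the converse direction. Your write-up is merely more explicit than the paper's about why strict satisfaction of the literal-conjunction yields the partial-sharpening condition.
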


\begin{proof}
    The case for which there is no valuation making the formulae in $\Gamma$ strictly true together is obvious, since then $D_{\Gamma}=\bot$.

    So consider the case in which the formulae in $\Gamma$ can be made strictly true jointly. If $v(\bigwedge \Gamma)=1$ then $v(C_{\Gamma, v})=1$ by the previous lemma, and $v(D_{\Gamma})=1$. So $\models_{\mathsf{K}_3} \Gamma \Rightarrow D_{\Gamma}$.

    If there is $v'$ such that $v'(D_{\Gamma})=1$, then $v'(\bigvee_{v(\bigwedge \Gamma)=1} C_{\Gamma,v})=1$. So for some $v$ such that $v(\bigwedge \Gamma)=1$, we have $v'(C_{\Gamma,v})=1$. Then $v'$ must agree on the atoms of $\Gamma$ to which $v$ gives a classical value, and possibly differs on the atoms of $\Gamma$ to which $v$ gives the value $\sfrac{1}{2}$. Thus, $v'$ is a partial sharpening of $v$ with respect to $\textup{At}(\bigwedge\Gamma)$, and therefore since $v(\bigwedge \Gamma)=1$, we also have $v'(\bigwedge \Gamma)=1$ by Lemma \ref{L1}.
\end{proof}

We are now in a position to prove the main theorem of the section. Namely that $\models_{\mathsf{ST}} \Gamma \Rightarrow \Delta$ if and only if there is $\phi \in \mathcal{L}$ such that $\models_{\mathsf{K}_3} \Gamma \Rightarrow \phi$ and $\models_{\mathsf{LP}} \phi \Rightarrow \Delta$.

\begin{theorem}\label{ThST}
$\mathbb{ST}^{+} = \mathbb{K}_3^{+} \mid \mathbb{LP}^{+}$
\end{theorem}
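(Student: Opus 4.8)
The plan is to prove the two inclusions separately, taking the connecting formula $\phi$ to be $D_\Gamma$ for the nontrivial direction. Recall that, with $V$ the set of singletons, membership of $\langle \Gamma, \Delta\rangle$ in $\mathbb{K}_3^{+} \mid \mathbb{LP}^{+}$ amounts to the existence of a single formula $\phi$ with $\models_{\mathsf{K}_3} \Gamma \Rightarrow \phi$ and $\models_{\mathsf{LP}} \phi \Rightarrow \Delta$.

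The right-to-left inclusion $\mathbb{K}_3^{+} \mid \mathbb{LP}^{+} \subseteq \mathbb{ST}^{+}$ is immediate. Given such a $\phi$, I would take any valuation $v$ with $v(\gamma) = 1$ for all $\gamma \in \Gamma$; the $\mathsf{K}_3$-validity of $\Gamma \Rightarrow \phi$ forces $v(\phi) = 1$, hence $v(\phi) \neq 0$, and then the $\mathsf{LP}$-validity of $\phi \Rightarrow \Delta$ yields $v(\delta) \neq 0$ for some $\delta \in \Delta$. This is exactly $v \models_{\mathsf{ST}} \Gamma \Rightarrow \Delta$.

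For the left-to-right inclusion I would set $\phi = D_\Gamma$ (treating $\Gamma = \emptyset$ separately with $\phi = \top$). The $\mathsf{K}_3$-half, $\models_{\mathsf{K}_3} \Gamma \Rightarrow D_\Gamma$, is already given by Lemma \ref{Lab1}, so everything reduces to showing $\models_{\mathsf{LP}} D_\Gamma \Rightarrow \Delta$. Take a valuation $v'$ with $v'(D_\Gamma) \neq 0$; if $D_\Gamma = \bot$ this cannot happen, so $D_\Gamma = \bigvee_{v(\bigwedge\Gamma)=1} C_{\Gamma,v}$ and, since disjunction is $\max$, some disjunct satisfies $v'(C_{\Gamma,v}) \neq 0$ for a $v$ with $v(\bigwedge\Gamma) = 1$. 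I would then build a hybrid valuation $w$ agreeing with $v$ on the atoms of $\Gamma$ where $v$ is classical and with $v'$ everywhere else. By construction $w$ is a partial sharpening of $v$ with respect to $\textup{At}(\Gamma)$, so by Corollary \ref{C1} and Lemma \ref{L1} we get $w(\gamma) = v(\gamma) = 1$ for every $\gamma \in \Gamma$; the assumed $\mathsf{ST}$-validity of $\Gamma \Rightarrow \Delta$ then delivers some $\delta \in \Delta$ with $w(\delta) \neq 0$.

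The crux --- and the step I expect to be the main obstacle --- is transferring $w(\delta) \neq 0$ back to $v'(\delta) \neq 0$, since $w$ and $v'$ need not agree and truth-order monotonicity fails for negation. The key observation is that $v'(\alpha) \leq_{\textup{I}} w(\alpha)$ holds at every atom for the information order (with $\sfrac{1}{2}$ least): where $w = v'$ this is trivial, and on an atom $\alpha \in \textup{At}(\Gamma)$ with $v(\alpha) \neq \sfrac{1}{2}$ the fact that $v'(C_{\Gamma,v}) \neq 0$ forces the corresponding literal $\alpha^\sim$ to be non-false under $v'$, which is precisely what $v'(\alpha) \leq_{\textup{I}} w(\alpha)$ records in each of the two cases $v(\alpha) = 1$ and $v(\alpha) = 0$. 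Since the Strong Kleene scheme is monotonic for $\leq_{\textup{I}}$, this lifts to $v'(\delta) \leq_{\textup{I}} w(\delta)$; as $w(\delta) \neq 0$ and $\sfrac{1}{2}$ is the $\leq_{\textup{I}}$-least value, any value below $w(\delta)$ in $\leq_{\textup{I}}$ is still non-false, so $v'(\delta) \neq 0$. This establishes $\models_{\mathsf{LP}} D_\Gamma \Rightarrow \Delta$ and completes the inclusion.
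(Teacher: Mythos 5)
Your proposal is correct and is essentially the paper's own proof: same connecting formula ($\top$ for empty $\Gamma$, $D_\Gamma$ otherwise), same immediate right-to-left direction, and the same key construction --- your hybrid valuation $w$ (indexing valuation on the atoms of $C_{\Gamma,v}$, countermodel everywhere else) is exactly the valuation $v^*$ that the paper builds. The only divergence is presentational: the paper reasons by contradiction and uses Lemma \ref{L1} to push the falsity of every $\delta \in \Delta$ forward from the countermodel to the hybrid (with a separate case split on whether $D_\Gamma$ takes value $1$ or $\sfrac{1}{2}$), whereas you argue directly and pull non-falsity of a single $\delta$ backward via information-order monotonicity; these are contrapositive forms of the same step, since SK-monotonicity evaluated at a classical output value is precisely Lemma \ref{L1}.
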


\noindent
\begin{proof}

($\subseteq$) Assume that $\Gamma \Rightarrow \Delta \in \mathbb{ST}^{+}$, then we prove that there is a connecting formula $\phi$ such that $\Gamma \Rightarrow \phi \in \mathbb{K}_3^{+}$ and $\phi \Rightarrow \Delta \in \mathbb{LP}^{+}$.


    When $\Gamma$ is empty, we let $\phi:=\top$. Clearly, $\models_{\mathsf{K}_3} \Gamma \Rightarrow \top$, and since $\models_{\mathsf{ST}} \emptyset \Rightarrow \Delta$ by assumption, for all $v$, $v(\delta)\neq 0$ for some $\delta \in \Delta$, and therefore $\models_{\mathsf{LP}} \top \Rightarrow \Delta$.

    When $\Gamma$ is nonempty, we let $\phi:=D_{\Gamma}$. $\models_{\mathsf{K}_3} \Gamma \Rightarrow D_{\Gamma}$ follows from Lemma \ref{Lab1}. So let us prove $\models_{\mathsf{LP}} D_{\Gamma} \Rightarrow \Delta$.

    Assume $\not\models_{\mathsf{LP}} D_{\Gamma} \Rightarrow \Delta$ for the sake of contradiction. It follows that there is $v$ such that $v(D_{\Gamma})\neq 0$ and $v(\delta) = 0$ for all $\delta \in \Delta$. If $v(D_{\Gamma}) = \sfrac{1}{2}$, there is $v'$ such that $v(C_{\Gamma, v'}) = \sfrac{1}{2}$. So, for all conjuncts $\alpha^\sim$ of $C_{\Gamma, v'}$, $v(\alpha^\sim) \neq 0$. Consider $v^*$ which is exactly like $v$ except that for all the conjuncts $\alpha^\sim$ of $C_{\Gamma, v'}$, if $v(\alpha^\sim) = \sfrac{1}{2}$, $v^*(\alpha^\sim) = 1$. Then $v^*(C_{\Gamma, v'}) = 1$, so $v^*(D_{\Gamma}) = 1$. Moreover, $v^*$ is a partial sharpening of $v$ with respect to $\textup{At}(C_{\Gamma, v'})$ by construction, and is exactly like $v$ for all $\beta \in \textup{At}(\Delta) - \textup{At}(C_{\Gamma, v'})$, so trivially it is also a partial sharpening of $v$ with respect to $\textup{At}(\Delta) - \textup{At}(C_{\Gamma, v'})$. Therefore, by Corollary \ref{C2}, $v^*$ is a partial sharpening of $v$ with respect to $\textup{At}(C_{\Gamma, v'}) \cup (\textup{At}(\Delta) - \textup{At}(C_{\Gamma, v'}))$, and by Corollary \ref{C1} to $\textup{At}(\Delta) \subseteq \textup{At}(C_{\Gamma, v'}) \cup (\textup{At}(\Delta) - \textup{At}(C_{\Gamma, v'}))$. Now, given that $v(\delta) = 0$ for all $\delta \in \Delta$, by Lemma \ref{L1}, $v^*(\delta) = 0$ for all $\delta \in \Delta$. But by Lemma \ref{Lab1}, $\models_{\mathsf{K}_3} D_{\Gamma} \Rightarrow \bigwedge \Gamma$, so $v^*(\bigwedge \Gamma)=1$, and thus $v^* \not\models_{\mathsf{ST}} \Gamma \Rightarrow \Delta$, which is impossible. If $v(D_{\Gamma}) = 1$, then $v(\bigwedge\Gamma) = 1$ by the same lemma, and $v(\delta) = 0$ for all $\delta \in \Delta$ by assumption, which again contradicts $\models_{\mathsf{ST}} \Gamma \Rightarrow \Delta$.

    ($\supseteq$) Assume that $\Gamma \Rightarrow \Delta \in \mathbb{K}_3^{+} \mid \mathbb{LP}^{+}$. Then there is $\phi \in \mathcal{L}$ such that $\models_{\mathsf{K}_3} \Gamma \Rightarrow \phi$ and $\models_{\mathsf{LP}} \phi \Rightarrow \Delta$. So, for all $v$ if $v(\gamma) = 1$ for all $\gamma \in \Gamma$, then $v(\phi)=1$, and if $v(\phi) \neq 0$, then  $v(\delta) \neq 0$ for some $\delta \in \Delta$. Hence, for all $v$ if $v(\gamma) = 1$ for all $\gamma \in \Gamma$, then $v(\delta) \neq 0$ for some $\delta \in \Delta$, thus $\models_{\mathsf{ST}} \Gamma \Rightarrow \Delta$, and we have $\Gamma \Rightarrow \Delta \in \mathbb{ST}^{+}$, as desired.
\end{proof}

To illustrate the theorem, let us go back to the case of the $\mathsf{ST}$-valid argument $p \lor (q \wedge \neg q) \Rightarrow p \wedge (q \lor \neg q)$. The theorem implies that there is a formula $D$ such that $p \lor (q \wedge \neg q) \Rightarrow D \in \mathbb{K}_3^{+}$ and $D\Rightarrow  p \wedge (q \lor \neg q)\in \mathbb{LP}^{+}$. In this case, the method described produces $D=(p \wedge q)\vee (p \wedge \neg q) \vee p$,  and one can check that $p \lor (q \wedge \neg q) \Rightarrow (p \wedge q)\vee (p \wedge \neg q) \vee p \in \mathbb{K}_3^{+}$ and $(p \wedge q)\vee (p \wedge \neg q) \vee p\Rightarrow  p \wedge (q \lor \neg q)\in \mathbb{LP}^{+}$. This formula is more complex than the atomic formula $p$ which we saw above would suffice, but we note that it too may be described as an interpolant in the sense of Craig's interpolation theorem, since it only contains atoms common to $\Gamma$ and $\Delta$. We return to this connection to interpolation in section \ref{sec:discussion}.


\section{$\TS = \mathbb{LP}^{+} \ \dag \ \mathbb{K}_3^{+}$}\label{sec:ts}
Turning now to the proof of the second theorem, we start by proving a simple but useful lemma showing that for any formula $\phi$, if $\phi$ is tolerantly satisfiable, then the SK-valuation $v$ that maps each propositional variable to $\sfrac{1}{2}$ also makes $\phi$ tolerantly satisfiable, and if $\phi$ is strictly falsifiable, then the same $v$ makes $\phi$ strictly falsifiable too.

\begin{lemma}\label{L2}
Let $v'$ be such that for all $p \in \textit{Var}$, $v'(p)= \sfrac{1}{2}$. Then for all $\phi \in \mathcal{L}$,
\begin{enumerate}[label=(\roman*)]
\item\label{point1} If there is $v$ such that $v(\phi) \neq 0$, then $v'(\phi) \neq 0$.
\item\label{point2} If there is $v$ such that $v(\phi) \neq 1$, then $v'(\phi) \neq 1$.
\end{enumerate}
\end{lemma}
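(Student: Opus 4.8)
The plan is to obtain both clauses at once from a single monotonicity observation about the all-$\sfrac{1}{2}$ valuation $v'$. Recall the information order $\le_{\textup{I}}$ from the footnote, on which $\sfrac{1}{2}$ is the bottom element while $0$ and $1$ are its two $\le_{\textup{I}}$-maximal elements. Since $v'(p)=\sfrac{1}{2}\le_{\textup{I}} v(p)$ for every variable $p$ and every SK-valuation $v$, and since $v'$ agrees with every $v$ on the constants $\top,\bot,\lambda$, the valuation $v'$ sits coordinatewise $\le_{\textup{I}}$-below every $v$. By the monotonicity of the SK scheme (\cite{dare2023three}; see the footnote), it follows that $v'(\phi)\le_{\textup{I}} v(\phi)$ for every $\phi\in\mathcal{L}$ and every $v$. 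Should one prefer not to invoke monotonicity as a black box, this inequality is proved by a short structural induction on $\phi$, using the base clauses just mentioned and the fact that $\max$, $\min$, and $x\mapsto 1-x$ are all $\le_{\textup{I}}$-order-preserving.

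The second step is to read off the lemma, using that $0$ and $1$ are $\le_{\textup{I}}$-maximal. If $v'(\phi)=0$, then $v'(\phi)\le_{\textup{I}} v(\phi)$ forces $v(\phi)=0$ for every $v$ (the only value $\ge_{\textup{I}} 0$ is $0$); likewise, if $v'(\phi)=1$, then $v(\phi)=1$ for every $v$. Taking contrapositives yields exactly the two clauses: for \ref{point1}, the existence of some $v$ with $v(\phi)\neq 0$ rules out $v'(\phi)=0$, hence $v'(\phi)\neq 0$; and \ref{point2} follows symmetrically from the case $v'(\phi)=1$.

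There is essentially no obstacle here; the only step requiring a moment's care is the negation clause of the monotonicity induction, i.e. checking that $x\mapsto 1-x$ preserves $\le_{\textup{I}}$ (it fixes the bottom $\sfrac{1}{2}$ and merely swaps the two maximal values $0$ and $1$). I would also note, as a consistency check, that \ref{point1} and \ref{point2} are negation-duals: since $v(\neg\phi)\neq 0 \iff v(\phi)\neq 1$, and likewise for $v'$, instantiating \ref{point1} at $\neg\phi$ returns \ref{point2} at $\phi$, so in fact only one of the two clauses needs an independent argument. The substantive content of the lemma is thus simply that the all-$\sfrac{1}{2}$ valuation is the least informative SK-valuation, so any classical verdict it returns on $\phi$ is already forced on every valuation.
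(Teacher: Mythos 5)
Your proof is correct, but it takes a different route from the paper's. The paper disposes of Lemma \ref{L2} in one line: clauses \ref{point1} and \ref{point2} are proved \emph{simultaneously} by induction on the complexity of $\phi$ (the simultaneity being forced by the negation clause, where establishing \ref{point1} for $\neg\psi$ requires \ref{point2} for $\psi$, and vice versa). You instead prove a single stronger invariant --- $v'(\phi) \le_{\textup{I}} v(\phi)$ for every valuation $v$ and every $\phi$ --- and obtain both clauses at once from the $\le_{\textup{I}}$-maximality of the classical values $0$ and $1$. Unpacked, your invariant is exactly the conjunction of the contrapositives of \ref{point1} and \ref{point2}, so the underlying induction is the same computation; but your packaging has real advantages: it exhibits the lemma as an instance of the monotonicity of the Strong Kleene scheme, which the paper already invokes (in the footnote preceding Lemma \ref{L1}) and could therefore cite rather than reprove; it replaces two mutually dependent inductive clauses by one; and it isolates the conceptual content, namely that the all-$\sfrac{1}{2}$ valuation is $\le_{\textup{I}}$-least, so any classical verdict it delivers is forced on all valuations. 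Your negation-duality remark (that \ref{point2} is \ref{point1} instantiated at $\neg\phi$) is also sound, but note it is only available \emph{after} \ref{point1} is known for all formulas including negations --- as a clause-by-clause inductive shortcut it would fail, which is precisely why the paper's direct induction must treat the two clauses simultaneously. The trade-off: the paper's argument is minimal and self-contained, while yours is more general and reusable, at the cost of introducing (or citing) the information order.
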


\begin{proof}
    \ref{point1} and \ref{point2} are proved simultaneously by induction on the complexity of $\phi$.
\end{proof}

The next lemma shows a sense in which $\mathsf{TS}$-validity requires either that one of its premise is tolerantly unsatisfiable or that one of its conclusion is strictly unfalsifiable:

\begin{lemma}\label{Lem12}
If $\models_\mathsf{TS} \Gamma \Rightarrow \Delta$, then
\begin{enumerate}[label=(\roman*)]
\item For some $\gamma \in \Gamma$, it holds for all $v$ that $v(\gamma) = 0$
\item[or]
\item for some $\delta \in \Delta$, it holds for all $v$ that $v(\delta) = 1$.
\end{enumerate}
\end{lemma}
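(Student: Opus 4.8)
```latex
The plan is to prove the contrapositive. Suppose that neither disjunct holds: that is, suppose that for every $\gamma \in \Gamma$ there is a valuation making $\gamma$ tolerantly true (i.e.\ $v(\gamma) \neq 0$ for some $v$, depending on $\gamma$), and simultaneously that for every $\delta \in \Delta$ there is a valuation making $\delta$ non-strictly-true (i.e.\ $v(\delta) \neq 1$ for some $v$, depending on $\delta$). From this I want to manufacture a \emph{single} valuation witnessing $\not\models_{\mathsf{TS}} \Gamma \Rightarrow \Delta$, namely one valuation $w$ with $w(\gamma) \neq 0$ for \emph{all} $\gamma \in \Gamma$ and $w(\delta) \neq 1$ for \emph{all} $\delta \in \Delta$. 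The crucial observation is that Lemma \ref{L2} hands me exactly such a uniform witness: the all-$\sfrac{1}{2}$ valuation $v'$.

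The key steps, in order, are as follows. First, unpack the negations of the two disjuncts in the consequent of Lemma \ref{Lem12}: the failure of disjunct (i) says that for each $\gamma \in \Gamma$ there exists some valuation $v$ with $v(\gamma) \neq 0$, and the failure of disjunct (ii) says that for each $\delta \in \Delta$ there exists some valuation $v$ with $v(\delta) \neq 1$. Second, apply Lemma \ref{L2}\ref{point1} to each $\gamma$ to conclude $v'(\gamma) \neq 0$, and apply Lemma \ref{L2}\ref{point2} to each $\delta$ to conclude $v'(\delta) \neq 1$, where $v'$ is the constant $\sfrac{1}{2}$ valuation. Third, observe that this single $v'$ therefore satisfies $v'(\gamma) \neq 0$ for all $\gamma \in \Gamma$ while $v'(\delta) \neq 1$, hence in particular $v'(\delta) \neq 1$ for all $\delta \in \Delta$; by the definition of $\mathsf{TS}$-satisfaction this means $v' \not\models_{\mathsf{TS}} \Gamma \Rightarrow \Delta$, so $\not\models_{\mathsf{TS}} \Gamma \Rightarrow \Delta$, as required.

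The real work of the lemma is already discharged by Lemma \ref{L2}, whose job is precisely to collapse the many existential witnesses (one per formula, each possibly a different valuation) into the \emph{one} canonical valuation $v'$. This is what makes the argument go through: without it, knowing that each premise is individually tolerantly satisfiable and each conclusion individually non-strictly-true would not license the existence of a common valuation doing all of this at once. So the main obstacle is conceptual rather than computational --- it lies in recognizing that the uniform $\sfrac{1}{2}$ valuation is the correct joint witness --- and it has been pre-packaged into Lemma \ref{L2}. I should also handle the degenerate edge cases cleanly: if $\Gamma = \emptyset$ the premise-side condition on $v'$ is vacuously met, and if $\Delta = \emptyset$ the condition ``$v'(\delta) \neq 1$ for some $\delta$'' is likewise vacuous, so $v'$ still fails to $\mathsf{TS}$-satisfy the inference; in both cases the contrapositive conclusion follows without change.
```
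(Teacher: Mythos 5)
Your proof is correct and takes essentially the same route as the paper's: both argue by contraposition and use Lemma \ref{L2} to collapse the per-formula existential witnesses into the single constant-$\sfrac{1}{2}$ valuation $v'$, which then fails to $\mathsf{TS}$-satisfy $\Gamma \Rightarrow \Delta$. The only difference is that you spell out the empty-$\Gamma$ and empty-$\Delta$ cases explicitly, which the paper leaves implicit.
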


\noindent
\textit{Proof.} We prove the contrapositive. Assume that for all $\gamma \in \Gamma$ there is $v$ such that $v(\gamma) \neq 0$ and for all $\delta \in \Delta$, there is $v$ such that $v(\delta) \neq 1$. Let $v'$ be such that $v'(p) = \sfrac{1}{2}$ for all $p \in \textit{Var}$. By Lemma \ref{L2}, for all $\gamma \in \Gamma, v'(\gamma) \neq 0$ and for all $\delta \in \Delta$, $v'(\delta) \neq 1$. Hence, $\not\models_\mathsf{TS} \Gamma \Rightarrow \Delta$.\qed

We then turn to the proof of the main theorem:

\begin{theorem}
$\mathbb{TS}^{+} = \mathbb{LP}^{+} \ \dag \ \mathbb{K}_3^{+}$
\end{theorem}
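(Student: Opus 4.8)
The plan is to prove the set equality $\TS = \LP \dag \K$ by unfolding the definition of the relative sum and showing it matches $\mathsf{TS}$-validity directly. Recall that $\langle \Gamma, \Delta\rangle \in \LP \dag \K$ means that for all formulae $\phi$, either $\models_{\mathsf{LP}} \Gamma \Rightarrow \phi$ or $\models_{\mathsf{K}_3} \phi \Rightarrow \Delta$. I would establish the two inclusions separately, and I expect the $(\subseteq)$ direction to carry most of the weight, since the $(\supseteq)$ direction should follow by a short semantic argument exploiting the middle formula $\phi$.

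For the inclusion $\TS \subseteq \LP \dag \K$, I would assume $\models_{\mathsf{TS}} \Gamma \Rightarrow \Delta$ and fix an arbitrary $\phi$; the goal is to show $\models_{\mathsf{LP}} \Gamma \Rightarrow \phi$ or $\models_{\mathsf{K}_3} \phi \Rightarrow \Delta$. Here I would lean on Lemma \ref{Lem12}: $\mathsf{TS}$-validity forces either some premise $\gamma$ with $v(\gamma)=0$ for all $v$, or some conclusion $\delta$ with $v(\delta)=1$ for all $v$. In the first case, any $\phi$ trivially satisfies $\models_{\mathsf{LP}} \Gamma \Rightarrow \phi$, since the premise set can never be tolerantly satisfied (no $v$ makes $\gamma$ non-false, so the $\mathsf{LP}$ satisfaction condition holds vacuously for every $\phi$). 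In the second case, $\models_{\mathsf{K}_3} \phi \Rightarrow \Delta$ holds for every $\phi$, because some $\delta$ is strictly true under all valuations, making the $\mathsf{K}_3$ conclusion condition hold vacuously. Either way the disjunction is secured for the arbitrary $\phi$, giving membership in the relative sum.

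For the converse inclusion $\LP \dag \K \subseteq \TS$, I would assume $\langle\Gamma,\Delta\rangle \in \LP\dag\K$ and derive $\models_{\mathsf{TS}} \Gamma \Rightarrow \Delta$. The natural move is to instantiate the universally quantified $\phi$ with a well-chosen witness and then argue semantically. A clean choice is to take $\phi$ to be a formula equivalent to $\bigvee \Delta$ (or $\bigwedge \Gamma$), or better, to use Lemma \ref{L2} and the canonical valuation $v'$ sending every variable to $\sfrac{1}{2}$. Concretely, I would aim to contradict the assumption: suppose $\not\models_{\mathsf{TS}} \Gamma \Rightarrow \Delta$, so there is $v$ with $v(\gamma)\neq 0$ for all $\gamma\in\Gamma$ and $v(\delta)\neq 1$ for all $\delta\in\Delta$. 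Then for a suitably chosen $\phi$ I must show that \emph{both} $\not\models_{\mathsf{LP}}\Gamma\Rightarrow\phi$ and $\not\models_{\mathsf{K}_3}\phi\Rightarrow\Delta$, contradicting the relative-sum hypothesis. The candidate $\phi$ should be a formula that the canonical valuation $v'$ maps to $\sfrac{1}{2}$ while being falsifiable and unsatisfiable enough to break both entailments; the constant $\lambda$, or a disjunction/conjunction engineered from the variables, is the likely witness here.

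The main obstacle I anticipate is the converse direction: exhibiting a single $\phi$ that simultaneously falsifies $\Gamma \Rightarrow \phi$ in $\mathsf{LP}$ and $\phi \Rightarrow \Delta$ in $\mathsf{K}_3$ under the counterexample valuation. The delicacy is that $\phi$ must take the value $\sfrac{1}{2}$ (so that it is neither non-false for the $\mathsf{LP}$ premise-to-$\phi$ step nor strictly true for the $\phi$-to-$\Delta$ step) under a valuation compatible with $v$, and Lemma \ref{L2} together with the all-$\sfrac{1}{2}$ valuation $v'$ is exactly the tool that lets us collapse the search to this canonical case. I would therefore organize the argument so that $\lambda$ (whose value is always $\sfrac{1}{2}$) serves as the witness $\phi$: then $\models_{\mathsf{LP}}\Gamma\Rightarrow\lambda$ fails precisely when some $v$ makes all of $\Gamma$ non-false, and $\models_{\mathsf{K}_3}\lambda\Rightarrow\Delta$ fails precisely when some $v$ makes all of $\Delta$ non-true, which is what the failure of $\mathsf{TS}$-validity supplies. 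Verifying that this single choice of $\phi$ discharges both halves cleanly is the crux of the proof.
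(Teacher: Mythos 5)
Your ($\subseteq$) direction is exactly the paper's: Lemma \ref{Lem12} delivers the disjunction for every $\phi$ at once, and that half of your proposal is fine. The genuine gap is in the ($\supseteq$) direction, where your chosen witness $\lambda$ cannot work --- it is in fact the one formula that can \emph{never} witness non-membership in the relative sum. You write that $\models_{\mathsf{LP}}\Gamma\Rightarrow\lambda$ fails precisely when some $v$ makes all of $\Gamma$ non-false, but this inverts the satisfaction conditions: an $\mathsf{LP}$-counterexample to $\Gamma \Rightarrow \phi$ requires a valuation making all of $\Gamma$ non-false and $\phi$ \emph{false} (value $0$), and a $\mathsf{K}_3$-counterexample to $\phi \Rightarrow \Delta$ requires a valuation making $\phi$ \emph{strictly true} (value $1$) and all of $\Delta$ non-true. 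Since $v(\lambda)=\sfrac{1}{2}$ under every valuation, $\lambda$ is never false and never strictly true, so $\models_{\mathsf{LP}}\Gamma\Rightarrow\lambda$ and $\models_{\mathsf{K}_3}\lambda\Rightarrow\Delta$ hold for \emph{all} $\Gamma,\Delta$: with $\phi=\lambda$ both disjuncts of the sum are always true, never both false. (This is the very fact the paper exploits in Section \ref{sec:converse} to show that $\mathbb{LP}^{+}\mid\mathbb{K}_3^{+}$ is the universal relation.) The same inversion infects your guiding intuition: a formula taking value $\sfrac{1}{2}$ at a valuation \emph{satisfies} both $\Gamma\Rightarrow\phi$ in $\mathsf{LP}$ and $\phi\Rightarrow\Delta$ in $\mathsf{K}_3$ at that valuation; it falsifies neither. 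Your fallback candidates fare no better: $\models_{\mathsf{K}_3}\bigvee\Delta\Rightarrow\Delta$ and $\models_{\mathsf{LP}}\Gamma\Rightarrow\bigwedge\Gamma$ hold unconditionally, so neither can break both disjuncts.

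What the proof actually needs --- and what the paper does --- is a witness whose value can be tuned \emph{independently} of $\Gamma$ and $\Delta$, together with the observation that the two counterexamples may use two \emph{different} valuations: only the formula $\phi$ must be shared, and indeed no single valuation could ever falsify both entailments, since that would require $\phi$ to take value $0$ and value $1$ simultaneously. Concretely: given the $\mathsf{TS}$-counterexample $v$, pick a fresh variable $p \notin \textup{At}(\Gamma)\cup\textup{At}(\Delta)$, which exists because $\textit{Var}$ is infinite and inferences are finite. Let $v'$ agree with $v$ except that $v'(p)=0$; then $v'$ still makes every $\gamma\in\Gamma$ non-false (freshness), so $v'\not\models_{\mathsf{LP}}\Gamma\Rightarrow p$. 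Let $v''$ agree with $v$ except that $v''(p)=1$; then $v''$ still makes every $\delta\in\Delta$ non-true, so $v''\not\models_{\mathsf{K}_3}p\Rightarrow\Delta$. Hence $\Gamma\Rightarrow\Delta\notin\mathbb{LP}^{+}\ \dag\ \mathbb{K}_3^{+}$. Freshness of $p$ is what replaces your appeal to Lemma \ref{L2} and the all-$\sfrac{1}{2}$ valuation, which play no role in this direction of the paper's proof.
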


\noindent
\textit{Proof.}

($\subseteq$) Assume $\Gamma \Rightarrow \Delta \in \mathbb{TS}^{+}$. Then $\models_\mathsf{TS} \Gamma \Rightarrow \Delta$, and by Lemma \ref{Lem12}, for some $\gamma \in \Gamma$, it holds for all $v$ that $v(\gamma) = 0$ or for some $\delta \in \Delta$, it holds for all $v$ that $v(\delta) = 1$. So, for all $\phi \in \mathcal{L}$, $\models_{\mathsf{LP}} \Gamma \Rightarrow \phi$ or $\models_{\mathsf{K}_3} \phi \Rightarrow \Delta$, and therefore $\Gamma \Rightarrow \Delta \in \mathbb{LP}^{+} \ \dag \ \mathbb{K}_3^{+}$.

($\supseteq$) We prove the contrapositive. Assume $\Gamma \Rightarrow \Delta \not\in \mathbb{TS}^{+}$. Then $\not\models_{\mathsf{TS}} \Gamma \Rightarrow \Delta$. So, there is a valuation $v$ such that $v(\gamma) \neq 0$ for all $\gamma \in \Gamma$ and $v(\delta) \neq 1$ for all $\delta \in \Delta$. Let $p \in \textit{Var} - (\text{At}(\Gamma) \cup \text{At}(\Delta))$. Since $\textit{Var}$ is an infinite set and both $\text{At}(\Gamma)$ and $\text{At}(\Delta)$ are finite sets, there is such a $p$. Let $v'$ be exactly like $v$ except that if $v(p) \neq 0$, $v'(p)=0$. Then $v'(\gamma) \neq 0$ for all $\gamma \in \Gamma$ since $p$ does not appear in $\Gamma$, and $v'(p) = 0$, so $v' \not\models_{\mathsf{LP}} \Gamma \Rightarrow p$. Now, let $v''$ be exactly like $v$ except that if $v(p) \neq 1$, $v''(p)=1$. Then $v''(p)= 1$ and $v''(\delta)\neq 1$ for all $\delta \in \Delta$ since $p$ does not appear in $\Delta$, so $v'' \not\models_{\mathsf{K}_3} p \Rightarrow \Delta$. Hence, there is $\phi \coloneqq p$ such that $\not\models_{\mathsf{LP}} \Gamma \Rightarrow \phi$ and $\not\models_{\mathsf{K}_3} \phi \Rightarrow \Delta$, meaning that $\Gamma \Rightarrow \Delta \not\in \mathbb{LP}^{+} \ \dag \ \mathbb{K}_3^{+}$. \qed \bigskip

As we shall see in the next section, this characterization shows that $\mathsf{TS}$ can be conceptualized in terms of the theorems of $\mathsf{K}_3$, and of the antitheorems of $\mathsf{LP}$. The theorems of $\mathsf{K}_3$ indeed consist only of the trivial inferences involving $\top$ in the consequent (e.g. $\Gamma \Rightarrow \psi \lor \top$) and the antitheorems of $\mathsf{LP}$ only of the trivial inferences involving $\bot$ in the antecedent (e.g. $\phi \wedge \bot \Rightarrow \Delta$).

\section{The case of $\mathbb{LP}^{+} \mid \mathbb{K}_3^{+}$ and $\mathbb{K}_3^{+} \ \dag \ \mathbb{LP}^{+}$}\label{sec:converse}


How tight are the previous characterizations of $\mathbb{ST}^{+}$ and $\mathbb{TS}^{+}$? Could $\mathbb{LP}^{+}$ and $\mathbb{K}_3^{+}$ be swapped in the statement of either theorem? We show that the answer is negative.


Let us consider $\mathbb{TS}^{+}$ first. The relative sum of $\mathbb{K}_3^{+}$ and $\mathbb{LP}^{+}$ is not equal to $\mathbb{TS}^{+}$, but to the set of antitheorems and theorems of $\mathsf{ST}$. In order to prove this claim, we first define the antitheorems and the theorems of a logic, and then show that the operation of relative sum extracts the antitheorems of the first operand and the theorems of the second. After rehearsing a well-known fact about $\mathsf{K}_3$ and $\mathsf{LP}$, namely that the first has the same antitheorems as $\mathsf{ST}$ and the second the same theorems as $\mathsf{ST}$, we prove our main claim.

\begin{defn}[Theorem and antitheorem]
Given a logic $\mathsf{L}$, a set of formulae $\Gamma$ is said to be a theorem of $\mathsf{L}$ when $\models_{\mathsf{L}} \emptyset \Rightarrow \Gamma$, and an antitheorem of $\mathsf{L}$ when $\models_{\mathsf{L}} \Gamma \Rightarrow \emptyset$.
\end{defn}

\begin{defn}[Trivial theorem and antitheorem]
    A theorem or antitheorem $\Gamma$ is trivial if there is $\gamma \in \Gamma$ such that for all $v, v'$: $v(\gamma)=v'(\gamma)$.
\end{defn}


We now focus on two well-known facts about antitheorems and theorems. For the sake of simplicity, we will only consider finitary logics, namely logics with a language in which formulas are constructed in a finite number of steps and where inferences are defined as pairs of finite sets of formulas.\footnote{When $\mathsf{L}$ is an infinitary logic, \ref{Ati2} of Fact \ref{fct:ath} does not necessarily entail \ref{Ati5}. To see this, note that for any $p \in \textit{Var}$, $p \in \textup{At}(q \wedge \neg q \wedge \bigwedge \textit{Var})$, so \ref{Ati5} is false, yet $q \wedge \neg q \wedge \bigwedge \textit{Var}$ is for instance an antitheorem of the classical infinitary logic $\mathcal{L}_{\omega_1}$ permitting conjunctions and disjunctions of length $<\omega_1$ and with $|\textit{Var}|<\omega_1$. To remedy this, one can restate \ref{Ati5} as $\models_{\mathsf{L}}\sigma_p(\Gamma) \Rightarrow p$ with $\sigma_p$ a substitution that renames the variables in such a way that $p$ does not appear in the antecedent anymore (see \cite{prenosil_2023}). Since $p \not\in \sigma_p(q \wedge \neg q \wedge \bigwedge \textit{Var})$ and $\sigma_p(q \wedge \neg q \wedge \bigwedge \textit{Var}) \Rightarrow p$ is valid in $\mathcal{L}_{\omega_1}$, the preceding case does not constitute a counterexample to the entailment of \ref{Ati5} by \ref{Ati2} anymore.}


\begin{fact}\label{fct:ath}
    Let $\mathsf{L}$ be a finitary logic with a consequence relation defined from the pair of sets of designated values $\langle D_1, D_2 \rangle$. The following are equivalent:
    \begin{enumerate}[label=(\roman*), align=left]
    \setlength\itemsep{0pt}
        \item\label{Ati2} $\Gamma$ is an antitheorem of $\mathsf{L}$.
        \item\label{Ati3} $\models_{\mathsf{L}} \Gamma \Rightarrow \Delta$ for all $\Delta \subseteq \mathcal{L}$.
        \item\label{Ati4} $\models_{\mathsf{L}} \Gamma \Rightarrow \phi$ for all $\phi \in \mathcal{L}$.
        \item\label{Ati5} $\models_{\mathsf{L}} \Gamma \Rightarrow p$ for some $p \not\in \textup{At}(\Gamma)$.
    \end{enumerate}
\end{fact}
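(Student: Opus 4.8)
The plan is to prove the four conditions equivalent by establishing the cycle \ref{Ati2} $\Rightarrow$ \ref{Ati3} $\Rightarrow$ \ref{Ati4} $\Rightarrow$ \ref{Ati5} $\Rightarrow$ \ref{Ati2}, where the first three arrows are routine specializations and only the closing arrow requires an argument. Throughout I use that $v \models_{\mathsf{L}} \Gamma \Rightarrow \Delta$ means that whenever $v(\gamma) \in D_1$ for every $\gamma \in \Gamma$, then $v(\delta) \in D_2$ for some $\delta \in \Delta$. Since the consequent of $\Gamma \Rightarrow \emptyset$ is an empty, hence never satisfied, disjunction, $\Gamma$ being an antitheorem (i.e. $\models_{\mathsf{L}} \Gamma \Rightarrow \emptyset$) amounts to saying that no valuation assigns a value in $D_1$ to every member of $\Gamma$ at once.

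For \ref{Ati2} $\Rightarrow$ \ref{Ati3}: if no $v$ makes all of $\Gamma$ designated in $D_1$, then for every $\Delta$ the antecedent of $\Gamma \Rightarrow \Delta$ is met by no valuation, so $\Gamma \Rightarrow \Delta$ is vacuously satisfied by every $v$, whence $\models_{\mathsf{L}} \Gamma \Rightarrow \Delta$. The implication \ref{Ati3} $\Rightarrow$ \ref{Ati4} is just the special case $\Delta := \lbrace \phi \rbrace$. For \ref{Ati4} $\Rightarrow$ \ref{Ati5} I would invoke finitariness: since $\Gamma$ is finite and each of its formulae is built in finitely many steps, $\textup{At}(\Gamma)$ is finite, while $\textit{Var}$ is infinite, so some $p \not\in \textup{At}(\Gamma)$ exists; instantiating \ref{Ati4} at $\phi := p$ yields $\models_{\mathsf{L}} \Gamma \Rightarrow p$ with $p \not\in \textup{At}(\Gamma)$. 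This is precisely the step that collapses in the infinitary setting flagged in the footnote, where $\textup{At}(\Gamma)$ may exhaust $\textit{Var}$ and no fresh $p$ is available.

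The substantive direction is \ref{Ati5} $\Rightarrow$ \ref{Ati2}, which I would prove by contraposition. Suppose $\Gamma$ is not an antitheorem, so there is a valuation $v$ with $v(\gamma) \in D_1$ for all $\gamma \in \Gamma$. Let $p \not\in \textup{At}(\Gamma)$ be as in \ref{Ati5}, and fix a truth-value $t \notin D_2$; such a $t$ exists provided $D_2$ is a proper subset of $\lbrace 0, \sfrac{1}{2}, 1 \rbrace$, which holds for all the logics at issue, since there $D_2$ is either $S$ or $T$. Define $v'$ to agree with $v$ on every atom except that $v'(p) := t$. Because the value of a formula depends only on the values of its atoms and $p \not\in \textup{At}(\gamma)$ for every $\gamma \in \Gamma$, we get $v'(\gamma) = v(\gamma) \in D_1$ for all $\gamma$, while $v'(p) = t \notin D_2$. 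Hence $v' \not\models_{\mathsf{L}} \Gamma \Rightarrow p$, so $\not\models_{\mathsf{L}} \Gamma \Rightarrow p$, contradicting \ref{Ati5}. Therefore $\Gamma$ is an antitheorem, closing the cycle.

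The only genuine obstacle is \ref{Ati5} $\Rightarrow$ \ref{Ati2}, and it rests on two ingredients I would make explicit: the coincidence principle that valuations agreeing on $\textup{At}(\phi)$ agree on $\phi$ (an immediate induction on complexity built into the SK-scheme), and the availability of a non-designated value for the fresh atom $p$, which is where the concrete shape of the designated sets enters. Everything else is bookkeeping about vacuous satisfaction and the existence of a fresh variable.
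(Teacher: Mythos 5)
Your proposal is correct and follows essentially the same route as the paper: the easy cycle \ref{Ati2} $\Rightarrow$ \ref{Ati3} $\Rightarrow$ \ref{Ati4} $\Rightarrow$ \ref{Ati5}, and then \ref{Ati5} $\Rightarrow$ \ref{Ati2} contrapositively, by taking a valuation designating all of $\Gamma$ and resetting a fresh variable $p \not\in \textup{At}(\Gamma)$ to a non-designated value. The only difference is presentational: you make explicit two points the paper leaves tacit, namely that the fresh-value step needs $D_2$ to be a proper subset of the truth-value set, and that finitariness is what licenses the existence of the fresh atom (which you locate in \ref{Ati4} $\Rightarrow$ \ref{Ati5}, matching the footnote's diagnosis of the infinitary failure).
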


\begin{proof}
    That \ref{Ati3} follows from \ref{Ati2}, \ref{Ati4} from \ref{Ati3}, and \ref{Ati5} from \ref{Ati4} is straightforward to prove, so we just show that \ref{Ati2} follows from \ref{Ati5}. Assume that $\not\models_{\mathsf{L}} \Gamma \Rightarrow \emptyset$. Then there is $v$ such that $v(\gamma) \in D_1$ for all $\gamma \in \Gamma$. Let $p \not\in \textup{At}(\Gamma)$, since $\mathsf{L}$ is finitary and $\Gamma$ finite, such a $p$ exists. Assume further that $v'$ is exactly like $v$ except that if $v(p) \in D_2$, $v'(p) \not\in D_2$. Then $v'(\gamma) \in D_1$ for all $\gamma \in \Gamma$ since $p \not\in \textup{At}(\Gamma)$ and hence $\not\models_{\mathsf{L}} \Gamma \Rightarrow p$ for some $p \not\in \textup{At}(\Gamma)$. 
\end{proof}


\begin{fact}\label{fct:th}
    Let $\mathsf{L}$ be a finitary logic with a consequence relation defined from the pair of sets of designated values $\langle D_1, D_2 \rangle$. The following are equivalent:
    \begin{enumerate}[label=(\roman*), align=left]
    \setlength\itemsep{0pt}
        \item\label{Ati2bis} $\Delta$ is a theorem of $\mathsf{L}$.
        \item\label{Ati3bis} $\models_{\mathsf{L}} \Gamma \Rightarrow \Delta$ for all $\Gamma \subseteq \mathcal{L}$.
        \item\label{Ati4bis} $\models_{\mathsf{L}} \phi \Rightarrow \Delta$ for all $\phi \in \mathcal{L}$.
        \item\label{Ati5bis} $\models_{\mathsf{L}} p \Rightarrow \Delta$ for some $p \not\in \textup{At}(\Delta)$.
    \end{enumerate}
\end{fact}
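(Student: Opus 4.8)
The plan is to prove the cycle of implications \ref{Ati2bis} $\Rightarrow$ \ref{Ati3bis} $\Rightarrow$ \ref{Ati4bis} $\Rightarrow$ \ref{Ati5bis} $\Rightarrow$ \ref{Ati2bis}, mirroring exactly the proof of Fact \ref{fct:ath}. This statement is the dual of the previous one, obtained by interchanging antecedents with consequents and the role of $D_1$ with that of $D_2$; as there, three of the four implications are immediate and only the closing one requires an argument. Recall that, unwinding the definition of consequence for $\mathsf{L}$, $\Delta$ is a theorem precisely when every valuation $v$ assigns $v(\delta) \in D_2$ for at least one $\delta \in \Delta$.

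For the easy links: \ref{Ati3bis} follows from \ref{Ati2bis} because if the consequent of $\Gamma \Rightarrow \Delta$ is already satisfied at every $v$ (some $\delta$ designated), then the inference holds whatever the antecedent $\Gamma$; \ref{Ati4bis} is the special case of \ref{Ati3bis} with $\Gamma = \lbrace \phi \rbrace$; and \ref{Ati5bis} follows from \ref{Ati4bis} by instantiating $\phi$ with any variable $p \not\in \textup{At}(\Delta)$, one of which exists since $\textit{Var}$ is infinite while $\Delta$, being finite in a finitary logic, has only finitely many atoms.

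The one implication that carries content is \ref{Ati2bis} from \ref{Ati5bis}, which I would prove by contraposition. Assuming $\Delta$ is not a theorem, fix a witnessing valuation $v$ with $v(\delta) \notin D_2$ for every $\delta \in \Delta$, and let $p \not\in \textup{At}(\Delta)$ be arbitrary (such $p$ exists by finitarity). Define $v'$ to agree with $v$ everywhere except that, if $v(p) \notin D_1$, reset $v'(p)$ to some value in $D_1$. Then $v'(p) \in D_1$, while $v'(\delta) = v(\delta) \notin D_2$ for all $\delta \in \Delta$ because $p$ does not occur in any $\delta$. Hence $v' \not\models_{\mathsf{L}} p \Rightarrow \Delta$, and since $p$ was an arbitrary fresh variable, $\not\models_{\mathsf{L}} p \Rightarrow \Delta$ for every $p \not\in \textup{At}(\Delta)$, which is the negation of \ref{Ati5bis}.

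The main thing to watch is the valuation surgery in this last step: it relies on $\textup{At}$ being defined so that a variable absent from $\Delta$ leaves the values of all $\delta \in \Delta$ untouched when reassigned, and on $D_1$ being nonempty so that $p$ can be forced into the antecedent's designated set. Both hold for all the logics in play here (the sets $D_1$ for $\mathsf{K}_3$, $\mathsf{LP}$, $\mathsf{ST}$, $\mathsf{TS}$ are all nonempty), so the only genuine hypothesis being used is finitarity, exactly as in Fact \ref{fct:ath}.
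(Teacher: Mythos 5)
Your proof is correct and takes essentially the same route as the paper: the paper proves this fact by declaring it symmetric to Fact \ref{fct:ath}, whose proof is exactly your cycle --- the three straightforward implications followed by the contrapositive of \ref{Ati5bis} $\Rightarrow$ \ref{Ati2bis} via a fresh variable $p \not\in \textup{At}(\Delta)$ and the valuation modification forcing $v'(p) \in D_1$ while leaving all $\delta \in \Delta$ untouched. Your added remark that the argument needs $D_1 \neq \emptyset$ (dually, the paper's Fact \ref{fct:ath} needs the complement of $D_2$ to be nonempty) is a point the paper leaves implicit, but it holds for all the logics considered, so nothing is amiss.
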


\begin{proof}
    The proof is symmetric to the proof of Fact \ref{fct:ath}.
\end{proof}

Given Fact \ref{fct:ath}, we identify the set of antitheorems of a logic with the set of all the valid inferences of the form $\Gamma \Rightarrow \Delta$ with $\Gamma$ an antitheorem, and similarly for the set of theorems given Fact \ref{fct:th}. 

\begin{defn}[Sets of theorems and antitheorems]
The sets of theorems and antitheorems of a logic $\mathsf{L}$ are defined respectively as follows:
\begin{align*}
    \mathbb{L}^t &\coloneqq \lbrace \Gamma \Rightarrow \Delta \in \textit{INF}(\mathcal{L}) : \ \models_{\mathsf{L}} \emptyset \Rightarrow \Delta \rbrace,\\
    \mathbb{L}^a &\coloneqq \lbrace \Gamma \Rightarrow \Delta \in \textit{INF}(\mathcal{L}) : \ \models_{\mathsf{L}} \Gamma \Rightarrow \emptyset \rbrace.
\end{align*}
\end{defn}

The next fact shows that the operation of relative sum extracts the antitheorems of the first operand and the theorems of the second.

\begin{fact}\label{fct:sumat}
Given $\mathsf{L}_1$ and $\mathsf{L}_2$ two finitary logics with a consequence relation defined from a pair of sets of designated values:
\begin{align*}
    \mathbb{L}_1^{+} \ \dag \ \mathbb{L}_2^{+} = \mathbb{L}^a_1 \cup \mathbb{L}^t_2.
\end{align*}
\end{fact}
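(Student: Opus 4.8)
The plan is to unfold the definition of relative sum with $V = \lbrace \lbrace \phi \rbrace : \phi \in \mathcal{L} \rbrace$, so that $\langle \Gamma, \Delta \rangle \in \mathbb{L}_1^{+} \ \dag \ \mathbb{L}_2^{+}$ means precisely that for every $\phi \in \mathcal{L}$, either $\models_{\mathsf{L}_1} \Gamma \Rightarrow \phi$ or $\models_{\mathsf{L}_2} \phi \Rightarrow \Delta$. I would then prove the two inclusions separately, leaning entirely on the equivalences already established in Facts \ref{fct:ath} and \ref{fct:th}.

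For the inclusion $\mathbb{L}^a_1 \cup \mathbb{L}^t_2 \subseteq \mathbb{L}_1^{+} \ \dag \ \mathbb{L}_2^{+}$, I would argue by cases. If $\Gamma \Rightarrow \Delta \in \mathbb{L}^a_1$, then $\Gamma$ is an antitheorem of $\mathsf{L}_1$, and by \ref{Ati4} of Fact \ref{fct:ath} we have $\models_{\mathsf{L}_1} \Gamma \Rightarrow \phi$ for every $\phi \in \mathcal{L}$; hence the required disjunction holds (via its first disjunct) for every $\phi$. Symmetrically, if $\Gamma \Rightarrow \Delta \in \mathbb{L}^t_2$, then $\Delta$ is a theorem of $\mathsf{L}_2$, and \ref{Ati4bis} of Fact \ref{fct:th} gives $\models_{\mathsf{L}_2} \phi \Rightarrow \Delta$ for every $\phi$, so the disjunction holds via its second disjunct. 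Either way $\langle \Gamma, \Delta \rangle$ lies in the relative sum.

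For the converse inclusion, I would exploit the fact that the universal quantifier over $\phi$ in the definition of $\dag$ allows me to instantiate $\phi$ at a single, conveniently chosen formula. Since $\text{At}(\Gamma)$ and $\text{At}(\Delta)$ are finite while $\textit{Var}$ is infinite, I can pick a variable $p \in \textit{Var} - (\text{At}(\Gamma) \cup \text{At}(\Delta))$. Instantiating the membership hypothesis at $\phi := p$ yields $\models_{\mathsf{L}_1} \Gamma \Rightarrow p$ or $\models_{\mathsf{L}_2} p \Rightarrow \Delta$, where in each case $p$ is fresh for the relevant side. In the first case $p \not\in \text{At}(\Gamma)$, so the implication \ref{Ati5} $\Rightarrow$ \ref{Ati2} of Fact \ref{fct:ath} makes $\Gamma$ an antitheorem of $\mathsf{L}_1$, placing $\Gamma \Rightarrow \Delta$ in $\mathbb{L}^a_1$; in the second case $p \not\in \text{At}(\Delta)$, so the implication \ref{Ati5bis} $\Rightarrow$ \ref{Ati2bis} of Fact \ref{fct:th} makes $\Delta$ a theorem of $\mathsf{L}_2$, placing $\Gamma \Rightarrow \Delta$ in $\mathbb{L}^t_2$. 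In either case $\Gamma \Rightarrow \Delta \in \mathbb{L}^a_1 \cup \mathbb{L}^t_2$.

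The only real subtlety --- and the step I would flag --- is the choice of the fresh variable in the converse inclusion. The definition of relative sum supplies a disjunction for \emph{every} witness $\phi$, but the Facts only convert a single inference involving a variable absent from the antecedent (resp. consequent) into full antitheoremhood (resp. theoremhood). Choosing $p$ outside both $\text{At}(\Gamma)$ and $\text{At}(\Delta)$ simultaneously is exactly what guarantees that whichever disjunct happens to hold, the freshness hypothesis of the applicable Fact is met. The finitary assumption on $\mathsf{L}_1$ and $\mathsf{L}_2$ is what makes $\text{At}(\Gamma) \cup \text{At}(\Delta)$ finite, and hence such a $p$ available; this is also the point at which the footnoted caveat about infinitary logics becomes relevant.
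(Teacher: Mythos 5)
Your proposal is correct and follows essentially the same route as the paper's own proof: one inclusion by cases from antitheoremhood/theoremhood via \ref{Ati4} and \ref{Ati4bis}, and the converse by instantiating the sum at a fresh variable $p \notin \text{At}(\Gamma) \cup \text{At}(\Delta)$ (available by the finitary assumption) and then applying the equivalences \ref{Ati5}~$\Rightarrow$~\ref{Ati2} and \ref{Ati5bis}~$\Rightarrow$~\ref{Ati2bis}. The subtlety you flag --- choosing $p$ fresh for both sides at once so that whichever disjunct holds, the relevant Fact applies --- is exactly the point the paper's proof relies on as well.
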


\begin{proof}
($\subseteq$) If $\Gamma \Rightarrow \Delta \in \mathbb{L}_1^{+} \ \dag \ \mathbb{L}_2^{+}$, then for all $\phi \in \mathcal{L}$, $\models_{\mathsf{L}_1} \Gamma \Rightarrow \phi$ or $\models_{\mathsf{L}_2} \phi \Rightarrow \Delta$. Since $\mathsf{L}_1$, $\mathsf{L}_2$ are finitary and $\Gamma, \Delta$ are finite, there is $p \in \textit{Var}$ such that $p \not\in \textup{At}(\Gamma) \cup \textup{At}(\Delta)$ and $\models_{\mathsf{L}_1} \Gamma \Rightarrow p$ or $\models_{\mathsf{L}_2} p \Rightarrow \Delta$. So $\models_{\mathsf{L}_1} \Gamma \Rightarrow \Delta$ or $\models_{\mathsf{L}_2} \Gamma \Rightarrow \Delta$ by Facts \ref{fct:ath} and \ref{fct:th}, and therefore $\Gamma \Rightarrow \Delta \in \mathbb{L}^a_1 \cup \mathbb{L}^t_2$.
    
    ($\supseteq$) Let $\Gamma \Rightarrow \Delta \in \mathbb{L}^a_1 \cup \mathbb{L}_2^t$. If $\Gamma \Rightarrow \Delta \in \mathbb{L}^a_1$, then $\models_{\mathsf{L}_1} \Gamma \Rightarrow \phi$ for all $\phi \in \mathcal{L}$ by Fact \ref{fct:ath}. If $\Gamma \Rightarrow \Delta \in \mathbb{L}_2^t$, then $\models_{\mathsf{L}_2} \psi \Rightarrow \Delta$ for all $\psi \in \mathcal{L}$ by Fact \ref{fct:th}. In both cases we obtain $\models_{\mathsf{L}_1} \Gamma \Rightarrow \phi$ for all $\phi \in \mathcal{L}$ or $\models_{\mathsf{L}_2} \psi \Rightarrow \Delta$ for all $\psi \in \mathcal{L}$, which entails in turn $\models_{\mathsf{L}_1} \Gamma \Rightarrow \phi$ or $\models_{\mathsf{L}_2} \phi \Rightarrow \Delta$ for all $\phi \in \mathcal{L}$. And therefore $\Gamma \Rightarrow \Delta \in \mathbb{L}_1^{+} \ \dag \ \mathbb{L}_2^{+}$.
\end{proof}

We then go over a well-known fact about $\mathsf{K}_3$ and $\mathsf{LP}$. Namely that $\mathsf{K}_3$ has the same antitheorems as $\mathsf{ST}$, and $\mathsf{LP}$ the same theorems.

\begin{fact}\label{fct:STat}
    $\mathbb{K}^a_3 = \mathbb{ST}^a$ and $\mathbb{LP}^t = \mathbb{ST}^t$.
\end{fact}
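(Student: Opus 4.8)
The plan is to prove the two equalities $\mathbb{K}^a_3 = \mathbb{ST}^a$ and $\mathbb{LP}^t = \mathbb{ST}^t$ separately, each by analyzing the antivalidity/theoremhood condition at the level of the underlying designated-value semantics. The key observation driving both is that $\mathsf{ST}$, $\mathsf{K}_3$ and $\mathsf{LP}$ all use Strong-Kleene valuations but differ only in which set of values is designated on the premise side versus the conclusion side: $\mathsf{K}_3$ uses the strict set $S=\{1\}$ on both sides, $\mathsf{LP}$ uses the tolerant set $T=\{\sfrac{1}{2},1\}$ on both sides, and $\mathsf{ST}$ uses $S$ on the left and $T$ on the right. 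For antitheorems the conclusion side is empty (so only the left designated set matters), and for theorems the premise side is empty (so only the right designated set matters).

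For $\mathbb{K}^a_3 = \mathbb{ST}^a$, recall from the definition that $\Gamma \Rightarrow \Delta \in \mathbb{ST}^a$ iff $\models_{\mathsf{ST}} \Gamma \Rightarrow \emptyset$, and likewise for $\mathsf{K}_3$. Unpacking the satisfaction clauses, $\models_{\mathsf{K}_3}\Gamma \Rightarrow \emptyset$ holds iff there is no $v$ with $v(\gamma)=1$ for all $\gamma \in \Gamma$, and $\models_{\mathsf{ST}}\Gamma \Rightarrow \emptyset$ holds iff there is no $v$ with $v(\gamma)=1$ for all $\gamma \in \Gamma$ --- since the conclusion set is empty, the ``non-falsity of some conclusion'' clause is vacuously unsatisfiable in both cases, and the premise condition $v(\gamma)=1$ is identical ($\mathsf{ST}$ and $\mathsf{K}_3$ share the strict premise set $S=\{1\}$). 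Hence the two antitheorem conditions are literally the same predicate on $\Gamma$, giving $\mathbb{K}^a_3 = \mathbb{ST}^a$. Symmetrically, for $\mathbb{LP}^t = \mathbb{ST}^t$, we have $\models_{\mathsf{LP}}\emptyset \Rightarrow \Delta$ iff for all $v$ there is $\delta \in \Delta$ with $v(\delta)\neq 0$, and $\models_{\mathsf{ST}}\emptyset \Rightarrow \Delta$ iff for all $v$ there is $\delta \in \Delta$ with $v(\delta)\neq 0$ --- here the empty premise set makes the antecedent vacuously satisfied, and $\mathsf{ST}$ and $\mathsf{LP}$ share the tolerant conclusion set $T=\{\sfrac{1}{2},1\}$, so again the two theoremhood conditions coincide.

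I would state these as two short arguments, the second noted to be symmetric to the first. There is no real obstacle here: the entire content is that the emptiness of one side of the sequent erases the only place where $\mathsf{ST}$'s semantics differs from $\mathsf{K}_3$'s (on the antitheorem side) or from $\mathsf{LP}$'s (on the theorem side). The only point requiring minor care is to confirm that the sets $\mathbb{L}^a$ and $\mathbb{L}^t$ as defined collect \emph{all} inferences $\Gamma \Rightarrow \Delta$ whose antecedent (resp. consequent) satisfies the relevant condition, regardless of the other component; since the defining condition in each case depends only on $\Gamma$ (for $\mathbb{L}^a$) or only on $\Delta$ (for $\mathbb{L}^t$), the equality of the underlying predicates immediately lifts to equality of the full sets of inferences.
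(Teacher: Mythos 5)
Your proof is correct and follows essentially the same route as the paper's: both unpack the satisfaction clauses to observe that when $\Delta = \emptyset$ the $\mathsf{K}_3$- and $\mathsf{ST}$-antivalidity conditions collapse to the same predicate on $\Gamma$ (since the two logics share the premise-side designated set), and dually for $\mathsf{LP}$- and $\mathsf{ST}$-theoremhood when $\Gamma = \emptyset$. Your closing remark about lifting the predicate equality to the full sets $\mathbb{L}^a$ and $\mathbb{L}^t$ is a fine point of care that the paper leaves implicit, but it is the same argument.
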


\begin{proof}
Note on the one hand that $\models_{\mathsf{K}_3} \Gamma \Rightarrow \emptyset$ iff for all $v$, $v(\gamma) \neq 1$ for some $\gamma \in \Gamma$ iff $\models_{\mathsf{ST}} \Gamma \Rightarrow \emptyset$, and on the other that $\models_{\mathsf{LP}} \emptyset \Rightarrow \Delta$ iff for all $v$, $v(\delta) \neq 0$ for some $\delta \in \Delta$ iff $\models_{\mathsf{ST}} \emptyset \Rightarrow \Delta$.
\end{proof}

This fact enables us to characterize the relative sum of $\mathbb{K}_3^{+}$ and $\mathbb{LP}^{+}$ as the set of antitheorems and theorems of $\mathsf{ST}$.

\begin{theorem}
$\mathbb{K}_3^{+} \ \dag \ \mathbb{LP}^{+} = \mathbb{K}^a_3 \cup \mathbb{LP}^t = \mathbb{ST}^a \cup \mathbb{ST}^t$.
\end{theorem}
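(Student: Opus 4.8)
The plan is to prove the chain of equalities by invoking the machinery already assembled in this section, so that essentially no new computation is required. The statement asserts two equalities: $\mathbb{K}_3^{+} \dag \mathbb{LP}^{+} = \mathbb{K}^a_3 \cup \mathbb{LP}^t$ on the one hand, and $\mathbb{K}^a_3 \cup \mathbb{LP}^t = \mathbb{ST}^a \cup \mathbb{ST}^t$ on the other. The first equality is an immediate instance of Fact \ref{fct:sumat}: taking $\mathsf{L}_1 = \mathsf{K}_3$ and $\mathsf{L}_2 = \mathsf{LP}$, both of which are finitary logics with consequence relations defined from pairs of designated values ($\langle S, S\rangle$ and $\langle T, T\rangle$ respectively), Fact \ref{fct:sumat} yields $\mathbb{K}_3^{+} \dag \mathbb{LP}^{+} = \mathbb{K}^a_3 \cup \mathbb{LP}^t$ directly.

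For the second equality, I would appeal to Fact \ref{fct:STat}, which gives $\mathbb{K}^a_3 = \mathbb{ST}^a$ and $\mathbb{LP}^t = \mathbb{ST}^t$. Substituting these two identities into $\mathbb{K}^a_3 \cup \mathbb{LP}^t$ yields $\mathbb{ST}^a \cup \mathbb{ST}^t$ at once. Chaining the two equalities then establishes the full statement $\mathbb{K}_3^{+} \dag \mathbb{LP}^{+} = \mathbb{K}^a_3 \cup \mathbb{LP}^t = \mathbb{ST}^a \cup \mathbb{ST}^t$.

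Since both Facts have already been proved in the excerpt, the proof is really just a two-line assembly, and I would present it as such. The only point requiring a moment of care is confirming that $\mathsf{K}_3$ and $\mathsf{LP}$ genuinely fit the hypotheses of Fact \ref{fct:sumat} — namely that each is finitary (which holds, since $\mathcal{L}$ is built in finitely many steps and inferences are pairs of finite sets) and has a consequence relation given by a pair of designated value sets (here the \emph{same} set is used for premises and conclusions, but that is still a pair in the required sense). There is no genuine obstacle; the substantive work was all done earlier, and this theorem is the payoff that packages it. If anything, the one conceptual subtlety worth flagging to the reader is that the relative sum with the operands in this order ($\mathbb{K}_3^{+}$ first, $\mathbb{LP}^{+}$ second) collapses to only the \emph{degenerate} inferences — the antitheorems and theorems of $\mathsf{ST}$ — in sharp contrast with Theorem \ref{thm:ts}, where the order $\mathbb{LP}^{+} \dag \mathbb{K}_3^{+}$ recovers all of $\mathbb{TS}^{+}$. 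This is precisely the asymmetry that Section \ref{sec:converse} sets out to exhibit, and I would close the proof by noting that it confirms the order-sensitivity announced at the start of the section.
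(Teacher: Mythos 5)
Your proof is correct and is exactly the paper's own argument: the paper proves this theorem by citing Fact \ref{fct:sumat} (instantiated with $\mathsf{L}_1 = \mathsf{K}_3$, $\mathsf{L}_2 = \mathsf{LP}$) together with Fact \ref{fct:STat}, precisely as you do. Your additional check that $\mathsf{K}_3$ and $\mathsf{LP}$ satisfy the hypotheses of Fact \ref{fct:sumat} is a reasonable bit of extra care, but nothing in your route differs from the paper's.
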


\begin{proof}
    From Facts \ref{fct:sumat} and \ref{fct:STat}.
\end{proof}

It follows that the relative sum of $\mathbb{K}_3^{+}$ and $\mathbb{LP}^{+}$ is not equal to $\mathbb{TS}^{+}$ since for instance $\emptyset \Rightarrow p \lor \neg p$ is an element of $\mathbb{ST}^a \cup \mathbb{ST}^t$ but not of $\mathbb{TS}^{+}$, and similarly for $p \wedge \neg p \Rightarrow \emptyset$. 

This result provides in addition a better understanding of the role played by the relative sum in the characterization of $\mathbb{TS}^{+}$. Just as $\mathbb{K}_3^{+} \ \dag \ \mathbb{LP}^{+}$ is the set of antitheorems of $\mathsf{K}_3$ and of theorems of $\mathsf{LP}$ -- that is the set of antitheorems and theorems of $\mathsf{ST}$, as shown by the previous fact --, $\mathbb{TS}^{+} = \mathbb{LP}^{+} \ \dag \ \mathbb{K}_3^{+}$ is the set consisting of the antitheorems of $\mathbb{LP}^{+}$ and the theorems of $\mathbb{K}_3^{+}$. Since it is known that $\mathsf{LP}$ has no non-trivial antitheorems and $\mathsf{K}_3$ no non-trivial theorems, $\mathbb{TS}^{+}$ is therefore the set of trivial antitheorems of $\mathsf{LP}$ and trivial theorems of $\mathsf{K}_3$.

 
Turning to the case of $\mathsf{ST}$, we show that in the presence of $\lambda$, the relative product of the sets of $\mathsf{LP}$-valid and $\mathsf{K}_3$-valid inferences is the set of valid inferences of the trivial, universal logic, in which anything follows from anything.

\begin{fact}
    $\mathbb{ST}^{+}\neq \mathbb{LP}^{+} \mid \mathbb{K}_3^{+} = \mathcal{P}(\mathcal{L}) \times \mathcal{P}(\mathcal{L})$
\end{fact}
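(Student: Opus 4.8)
The plan is to prove the claim in two parts. First I would establish the equality $\LP \mid \K = \mathcal{P}(\mathcal{L}) \times \mathcal{P}(\mathcal{L})$, and then observe that this settles the inequality $\ST \neq \LP \mid \K$ immediately, since $\ST$ is a proper subset of $\textit{INF}(\mathcal{L})$ (for instance $p \Rightarrow \bot \notin \mathbb{ST}^{+}$, as the valuation sending $p$ to $1$ witnesses).

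For the main equality, the direction $\LP \mid \K \subseteq \mathcal{P}(\mathcal{L}) \times \mathcal{P}(\mathcal{L})$ is trivial, since every relation on $\mathcal{P}(\mathcal{L})$ is contained in the full relation. The substantive direction is the reverse inclusion: I must show that \emph{every} pair $\langle \Gamma, \Delta \rangle$ belongs to $\LP \mid \K$, i.e. that for every finite $\Gamma, \Delta \subseteq \mathcal{L}$ there exists a single connecting formula $\phi$ with $\models_{\mathsf{LP}} \Gamma \Rightarrow \phi$ and $\models_{\mathsf{K}_3} \phi \Rightarrow \Delta$. The key is that $\lambda$ is available and is constantly assigned the value $\sfrac{1}{2}$ by every SK-valuation. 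The natural candidate is therefore $\phi := \lambda$. For $\models_{\mathsf{LP}} \Gamma \Rightarrow \lambda$, note that $v^*(\lambda) = \sfrac{1}{2} \neq 0$ for every $v$, so the conclusion is never false and the inference is $\mathsf{LP}$-valid regardless of $\Gamma$. Dually, for $\models_{\mathsf{K}_3} \lambda \Rightarrow \Delta$, note that $v^*(\lambda) = \sfrac{1}{2} \neq 1$ for every $v$, so the (single) premise is never strictly true and the inference is $\mathsf{K}_3$-valid regardless of $\Delta$. Hence $\langle \Gamma, \{\lambda\}\rangle \in \mathbb{LP}^{+}$ and $\langle \{\lambda\}, \Delta\rangle \in \mathbb{K}_3^{+}$, witnessing $\langle \Gamma, \Delta\rangle \in \LP \mid \K$ with $\lambda \in V$ as the middle singleton.

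I do not anticipate a genuine obstacle here: the whole force of the result comes from the degeneracy introduced by $\lambda$, whose fixed intermediate value makes it simultaneously an $\mathsf{LP}$-theorem as a conclusion and an $\mathsf{K}_3$-antitheorem as a premise. The only point requiring mild care is the bookkeeping around empty $\Gamma$ or $\Delta$, but the argument above goes through uniformly in those cases as well: when $\Gamma = \emptyset$ the $\mathsf{LP}$-validity of $\emptyset \Rightarrow \lambda$ still holds because $v^*(\lambda) \neq 0$ always, and when $\Delta = \emptyset$ the $\mathsf{K}_3$-validity of $\lambda \Rightarrow \emptyset$ still holds because $v^*(\lambda) \neq 1$ always. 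Thus the proof is essentially a one-line verification once $\lambda$ is identified as the universal connector, and the contrast with Theorem \ref{ThST} is precisely that swapping the operands lets the degenerate constant $\lambda$ short-circuit the composition from \emph{both} sides at once.
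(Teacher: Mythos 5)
Your proof is correct and follows essentially the same route as the paper's: the constant $\lambda$, being fixed at $\sfrac{1}{2}$ under every SK-valuation, serves as the universal connecting formula, making $\Gamma \Rightarrow \lambda$ vacuously $\mathsf{LP}$-valid and $\lambda \Rightarrow \Delta$ vacuously $\mathsf{K}_3$-valid for arbitrary $\Gamma, \Delta$. Your explicit witness $p \Rightarrow \bot \notin \mathbb{ST}^{+}$ for the inequality is a small addition the paper leaves implicit, but the core argument is identical.
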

\begin{proof}
Note that for any $\Gamma, \Delta \subseteq \mathcal{L}$, $\models_{\mathsf{LP}}\Gamma \Rightarrow \lambda$ and $\models_{\mathsf{K}_3}\lambda \Rightarrow \Delta$. Hence, for any $\Gamma, \Delta $, $\Gamma \Rightarrow \Delta \in \mathbb{LP}^{+} \mid \mathbb{K}_3^{+}$.
\end{proof}

However, we can prove that $\mathbb{LP}^{+} \mid \mathbb{K}_3^{+} = \mathbb{ST}^{+}$ when the constant $\lambda$ is removed from the language. The connecting formula will be constructed by taking the conjunction of the premises with the classical tautologies built from each atom of the conclusions. Doing so forces one of the conclusions to take a classical value whenever the connecting formula is $\mathsf{K}_3$-satisfied.

\begin{theorem} Let $\mathcal{L}^{-}$ be the language defined  
in Definition \ref{language2} without the constant $\lambda$, then
\begin{align*}
    \mathbb{ST}^{+} = \mathbb{LP}^{+} \mid \mathbb{K}_3^{+}
\end{align*}
\end{theorem}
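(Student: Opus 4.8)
The plan is to prove the two inclusions separately, using one explicitly constructed connecting formula for $(\subseteq)$ and a sharpening argument for $(\supseteq)$. The absence of $\lambda$ enters crucially in both halves through a single \emph{classicality} observation: for any $\lambda$-free $\phi$, whenever $v$ assigns a classical value to every atom in $\mathrm{At}(\phi)$, then $v(\phi)\in\{0,1\}$. This is an immediate induction, since $\min$, $\max$ and $1-(\cdot)$ preserve $\{0,1\}$ and $v(\top),v(\bot)$ are classical; it is the only step that fails once $\lambda$ is present, and isolating it here pinpoints exactly why $\lambda$ must be dropped.

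For $(\subseteq)$, assume $\models_{\mathsf{ST}}\Gamma\Rightarrow\Delta$ and set $\phi \coloneqq \bigwedge\Gamma \wedge \bigwedge_{q\in\mathrm{At}(\Delta)}(q\vee\neg q)$, reading an empty conjunction as $\top$. First, $\models_{\mathsf{LP}}\Gamma\Rightarrow\phi$ holds unconditionally: if $v(\gamma)\neq 0$ for all $\gamma\in\Gamma$ then $v(\bigwedge\Gamma)\neq 0$, while each excluded-middle conjunct has $v(q\vee\neg q)\ge\sfrac{1}{2}$, so $v(\phi)\neq 0$. Second, for $\models_{\mathsf{K}_3}\phi\Rightarrow\Delta$, suppose $v(\phi)=1$; then $v(\gamma)=1$ for every $\gamma\in\Gamma$, and $v(q\vee\neg q)=1$ forces $v(q)\in\{0,1\}$ for every $q\in\mathrm{At}(\Delta)$. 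By the classicality observation each $\delta\in\Delta$ then receives a classical value, and since $\models_{\mathsf{ST}}\Gamma\Rightarrow\Delta$ yields $v(\delta)\neq 0$ for some $\delta$, that $\delta$ satisfies $v(\delta)=1$. Hence $\phi$ witnesses $\Gamma\Rightarrow\Delta\in\mathbb{LP}^{+}\mid\mathbb{K}_3^{+}$; the degenerate cases are subsumed by the same $\phi$ (for $\Delta=\emptyset$, $\mathsf{ST}$-validity says no valuation makes all of $\Gamma$ strictly true, so $v(\phi)=1$ never holds and $\models_{\mathsf{K}_3}\phi\Rightarrow\emptyset$ is vacuous).

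For $(\supseteq)$ I would argue by contraposition. Suppose $\not\models_{\mathsf{ST}}\Gamma\Rightarrow\Delta$, witnessed by $v_0$ with $v_0(\gamma)=1$ for all $\gamma\in\Gamma$ and $v_0(\delta)=0$ for all $\delta\in\Delta$, and let $\phi\in\mathcal{L}^{-}$ be arbitrary; I must show $\not\models_{\mathsf{LP}}\Gamma\Rightarrow\phi$ or $\not\models_{\mathsf{K}_3}\phi\Rightarrow\Delta$. Let $v_0^{*}$ be any classical sharpening of $v_0$, that is, $v_0^{*}$ agrees with $v_0$ on every atom where $v_0$ is classical and assigns some classical value to every remaining atom. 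By Lemma \ref{L1}, since $v_0(\gamma)\neq\sfrac{1}{2}$ and $v_0(\delta)\neq\sfrac{1}{2}$, we still have $v_0^{*}(\gamma)=1$ and $v_0^{*}(\delta)=0$ for all $\gamma,\delta$. Now $v_0^{*}$ is classical on every atom, in particular on $\mathrm{At}(\phi)$, so by the classicality observation $v_0^{*}(\phi)\in\{0,1\}$. If $v_0^{*}(\phi)=0$, then $v_0^{*}$ refutes $\models_{\mathsf{LP}}\Gamma\Rightarrow\phi$ (all premises nonzero, conclusion zero); if $v_0^{*}(\phi)=1$, then $v_0^{*}$ refutes $\models_{\mathsf{K}_3}\phi\Rightarrow\Delta$ (premise strictly true, every conclusion $0\neq 1$). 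Either way one conjunct fails, so $\Gamma\Rightarrow\Delta\notin\mathbb{LP}^{+}\mid\mathbb{K}_3^{+}$.

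The main obstacle is resisting the tempting but wrong shortcut in $(\supseteq)$ of sharpening $\phi$ itself up to value $1$: a $\lambda$-free formula can have $v(\phi)=\sfrac{1}{2}$ with \emph{no} classical sharpening making it true (e.g. $p\wedge\neg p$), so the supervaluationist slogan ``if $v(\phi)\neq 0$ then some sharpening makes $\phi$ true'' is false. The correct move is to sharpen the counterexample valuation $v_0$ — not $\phi$ — into a fully classical valuation, where Lemma \ref{L1} protects the values of the premises and conclusions while $\lambda$-freeness guarantees $\phi$ lands on a classical value; the dichotomy $v_0^{*}(\phi)\in\{0,1\}$ then breaks exactly one of the two required validities. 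Everything else is routine, and the whole weight of dropping $\lambda$ rests on the one classicality step.
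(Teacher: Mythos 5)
Your proof is correct and follows essentially the same route as the paper's: the same connecting formula $\bigwedge\Gamma \wedge \bigwedge_{q\in\mathrm{At}(\Delta)}(q\vee\neg q)$ for the left-to-right inclusion, and the same classical-sharpening argument via Lemma \ref{L1} (plus $\lambda$-freeness forcing $\phi$ to a classical value) for the converse. The only differences are presentational: you isolate the classicality fact as an explicit lemma where the paper uses it implicitly, you absorb the paper's limit cases into the main construction via the empty-conjunction-as-$\top$ convention, and you argue the second inclusion by direct contraposition rather than reductio.
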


\begin{proof}
($\subseteq$) Assume $\Gamma \Rightarrow \Delta \in \mathbb{ST}^{+}$, that is $\models_{\mathsf{ST}} \Gamma \Rightarrow \Delta$. 

We start with the limit cases. If there is no valuation $v$ such that $v(\gamma) \neq 0$ for all $\gamma \in \Gamma$, we set $\phi \coloneqq \bot$, since $\models_{\mathsf{LP}} \Gamma \Rightarrow \bot$ and $\models_{\mathsf{K}_3} \bot \Rightarrow \Delta$. If for all $v$, $v(\delta)= 1$ for some $\delta \in \Delta$, we set $\phi \coloneqq \top$, since $\models_{\mathsf{LP}} \Gamma \Rightarrow \top$ and $\models_{\mathsf{K}_3} \top \Rightarrow \Delta$. 

\noindent
We now turn to the main case. Let 
\begin{align*}
C \coloneqq \bigwedge \lbrace (\alpha \lor \neg \alpha) : \alpha \in \text{At}(\Delta) \rbrace
\end{align*}

\noindent
and $D \coloneqq \bigwedge \Gamma \wedge C$.

\noindent
We first show that $\models_{\mathsf{LP}} \Gamma \Rightarrow D$. Assume $v(\gamma) \in \lbrace \sfrac{1}{2}, 1 \rbrace$ for all $\gamma \in \Gamma$. For all $p \in \textit{Var}$, $v(p \lor \neg p) \neq 0$, so $v(C) \neq 0$, and since $v(\gamma) \neq 0$ for all $\gamma \in \Gamma$ by assumption, $v(D) \neq 0$. Hence, $\models_{\mathsf{LP}} \Gamma \Rightarrow D$.

\noindent
Assume now that $\not\models_{\mathsf{K}_3} D \Rightarrow \Delta$. Then there is $v$ such that $v(D) = 1$ and $v(\delta) \neq 1$ for all $\delta \in \Delta$. Given that $v(D) = 1$, $v(\gamma) = 1$ for all $\gamma \in \Gamma$ and $v(C) = 1$. From the latter, it follows that for all $\alpha \in \text{At}(\Delta)$, $v(\alpha\lor \neg \alpha) = 1$, therefore for all $\alpha \in \text{At}(\Delta)$, $v(\alpha) \neq \sfrac{1}{2}$. So, $v(\delta) \neq \sfrac{1}{2}$ for all $\delta \in \Delta$, meaning that $v(\delta) = 0$ for all $\delta \in \Delta$. Hence, $v \not\models_{\mathsf{ST}} \Gamma \Rightarrow \Delta$, which contradicts our first assumption. Therefore, $\models_{\mathsf{K}_3} D \Rightarrow \Delta$. Finally, since there is $\phi \coloneqq D$ such that $\models_{\mathsf{LP}} \Gamma \Rightarrow \phi$ and $\models_{\mathsf{K}_3} \phi \Rightarrow \Delta$, $\Gamma \Rightarrow \Delta \in \mathbb{LP}^{+} \mid \mathbb{K}_3^{+}$.

($\supseteq$) Assume $\Gamma \Rightarrow \Delta \in \mathbb{LP}^{+} \mid \mathbb{K}_3^{+}$ but $\Gamma \Rightarrow \Delta \not\in \mathbb{ST}^{+}$. From the latter, it follows that $\not\models_{\mathsf{ST}} \Gamma \Rightarrow \Delta$. Hence, there is $v$ such that $v(\gamma)=1$ for all $\gamma \in \Gamma$ and $v(\delta)=0$ for all $\delta \in \Delta$. From the former, it follows that there is $\phi$ such that $\models_{\mathsf{LP}} \Gamma \Rightarrow \phi$ and $\models_{\mathsf{K}_3} \phi \Rightarrow \Delta$. 
If $v(\phi) = 0$, then $\not\models_{\mathsf{LP}} \Gamma \Rightarrow \phi$, if $v(\phi) = 1$, then $\not\models_{\mathsf{K}_3} \phi \Rightarrow \Delta$, therefore $v(\phi) = \sfrac{1}{2}$. Now, let $v'$ be exactly like $v$ except that for all $p \in \textit{Var}$ such that $v(p) = \sfrac{1}{2}$, $v'(p) \neq \sfrac{1}{2}$. Then $v'(\gamma)=1$ for all $\gamma \in \Gamma$ and $v'(\delta)=0$ for all $\delta \in \Delta$ by Lemma \ref{L1}, and $v'(\phi) = 1$ or $v'(\phi) = 0$ since all its atoms get a classical value. And again, $\not\models_{\mathsf{LP}} \Gamma \Rightarrow \phi$ or $\not\models_{\mathsf{K}_3} \phi \Rightarrow \Delta$, which is inconsistent with our first assumption. So, $\Gamma \Rightarrow \Delta \in \mathbb{ST}^{+}$, as desired.
\end{proof}

The upshot is that $\mathbb{LP}^+$ and $\mathbb{K}_{3}^{+}$ are not interchangeable in the definitions of $\mathbb{ST}^+$ and $\mathbb{TS}^{+}$ using relational product and sum. The symmetry found for $\mathbb{ST}^+$ in the $\lambda$-free fragment is special, and does not function for $\mathbb{TS}^+$. This may reflect the fact that $\mathbb{TS}^+$ is already non-reflexive in the $\lambda$-free fragment, whereas $\mathbb{ST}^+$ is only nontransitive in the $\lambda$-including fragment (under the notion of global notion of metainferential validity, see fn. see \ref{fn:local}).




\section{On the duality between $\textsf{ST}$ and $\mathsf{TS}$}\label{sec:duality}

The results of the previous section tell us that $\mathbb{ST}^+$ and $\mathbb{TS}^+$ are both definable in terms of $\mathbb{K}_3^{+}$ and $\mathbb{LP}^+$, from the dual operations of sum and product. This suggests that $\mathbb{ST}^+$ and $\mathbb{TS}^+$ are dual logics. Whether $\mathbb{ST}^+$ and $\mathbb{TS}^+$ can be seen as such is moot, however, and depends on how duality between logics is defined.
\cite{cobreros2021cant} distinguish two notions of duality, which they call \textit{operational} duality and \textit{structural} duality, following Gentzen's distinction between operational rules and structural rules for a logic, and they show that $\mathbb{ST}^+$ and $\mathbb{TS}^+$ are structural duals, but not operational duals, and conversely for $\mathbb{K}_3^{+}$ and $\mathbb{LP}^+$. In this section, we propose a revision of the notion of operational duality allowing us to establish that by combining the two notions of operational and structural duality, $\mathbb{ST}^+$ and $\mathbb{TS}^+$ stand in a relation that is exactly the same as the relation to which $\mathbb{K}_3^{+}$ and $\mathbb{LP}^+$ stand to each other, and similarly for their counterparts defined in terms of antivalidity. We proceed in three steps: we first introduce our modified notion of operational duality, then Cobreros et al.'s notion of structural duality, finally we use the apparatus of relative sum and product to provide general interdefinability results for $\mathbb{ST}^+$, $\mathbb{ST}^-$, $\mathbb{TS}^+$ and $\mathbb{TS}^-$ in terms of the validities and antivalidities of either $\mathsf{LP}$ or $\mathsf{K}_3$.

\subsection{Operational duality, revised}

The notion of operational duality used by \cite{cobreros2021cant} originates in \cite{Cobrerosetal2012}, and for the sake of clarity we propose to call it negation duality. It has the following definition:

\begin{defn}[Negation duality]\label{NegDua}
    A logic $\mathsf{L}_1$ is \textit{negation dual} to a logic $\mathsf{L}_2$ when
   \[\begin{array}{ccc}
        \models_{\mathsf{L}_1} \Gamma \Rightarrow \Delta & \textup{iff} & \models_{\mathsf{L}_2} \neg (\Delta) \Rightarrow \neg (\Gamma)\\
    \end{array}\]
with $\neg (\Gamma) = \lbrace \neg \gamma : \gamma \in \Gamma \rbrace$ for all $\Gamma \subseteq \mathcal{L}$. 
\end{defn}

The logics $\mathsf{K}_3$ and $\mathsf{LP}$ are  \textit{negation} duals of each other. Concomitantly, it is known that $\mathsf{ST}$ and $\mathsf{TS}$ are \textit{negation} self-dual (see e.g. \cite{Cobrerosetal2012, DaRéetal2020}). 

Despite its intrinsic interest, such a notion does not license a straightforward extensional interdefinability of $\mathsf{K}_3$ and $\mathsf{LP}$. This can easily be noticed by letting $n: \mathcal{P}(\textit{INF}(\mathcal{L})) \longrightarrow \mathcal{P}(\textit{INF}(\mathcal{L}))$ be a mapping such that $n(\mathbb{X}) = \lbrace \neg (\Delta) \Rightarrow \neg (\Gamma) : \Gamma \Rightarrow \Delta \in \mathbb{X} \rbrace$. Clearly $\mathbb{K}_3^{+} \neq n(\mathbb{LP}^{+})$, since not all $\mathsf{K}_3$-valid inferences are of the form $\neg(\Delta) \Rightarrow \neg(\Gamma)$.

Because of that, we propose a distinct notion of operational duality, which will enable us to prove the extensional interdefinability between $\mathsf{K}_3$ and $\mathsf{LP}$. This notion of duality can be found in \cite{Kleene1952-KLEITM} (and goes back to \cite{Schroder1877}), where it is proven that for any two formulae $\phi$, $\psi$, if the inference $\phi \Rightarrow \psi$ is classically valid, then so is the inference $\psi' \Rightarrow \phi'$, with $\phi'$ and $\psi'$ obtained from $\phi$ and $\psi$ by interchanging $\wedge$ with $\lor$. In our case, we define a mapping $\Mysim$ that will substitute $\top$ and $\bot$ for each other in all formulae, and similarly $\lor$ and $\wedge$.


\begin{defn}
    Let $\Mysim: \mathcal{L} \to \mathcal{L}$ be a mapping such that:
    \begin{enumerate}
     \setlength\itemsep{0pt}
        \item[--] $\Mysim p = p$, if $p \in \textit{Var}$,
        \item[--] $\Mysim \top = \bot$,
        \item[--] $\Mysim \bot = \top$,
        \item[--] $\Mysim \lambda = \lambda$,
        \item[--] $\Mysim\neg \phi=\neg \Mysim\phi$,
        \item[--] $\Mysim(\phi \wedge \psi) = \Mysim\phi \lor \Mysim\psi$,
        \item[--] $\Mysim(\phi \lor \psi) = \Mysim\phi \wedge \Mysim\psi$.
    \end{enumerate}
\end{defn}
\noindent
The function $\Mysim$ is extended to all $\Gamma \subseteq \mathcal{L}$ by $\Mysim(\Gamma) \coloneqq \lbrace \Mysim\gamma : \gamma \in \Gamma \rbrace$, and to all $\mathbb{X} \subseteq \textit{INF}(\mathcal{L})$ by $\Mysim(\mathbb{X}) \coloneqq \lbrace \Mysim(\Gamma) \Rightarrow \Mysim(\Delta) : \Gamma \Rightarrow \Delta \in \mathbb{X} \rbrace$.


For instance, this function applied to the formula $p \wedge q$ will produce the formula $p \lor q$, and applied to the formula $p \wedge (q \lor \neg q)$ will produce the formula $p \lor (q \wedge \neg q)$. We use the mapping $\Mysim$ for the following revision of the notion of operational duality:

\begin{defn}[Operational duality for formulas, and inferences]
    For any two formulae $\phi$ and $\psi$, $\psi$ is called the (revised) operational dual of $\phi$ when $\Mysim\phi = \psi$. For any two inferences $\Gamma \Rightarrow \Delta$ and $\Delta' \Rightarrow \Gamma'$, 
$\Delta' \Rightarrow \Gamma'$ is called the operational dual of $\Gamma \Rightarrow \Delta$ when $\Mysim\Gamma = \Gamma'$ and $\Mysim\Delta = \Delta'$.
\end{defn}

The next two lemmas express useful properties of $\Mysim$, namely that it is involutive and commutes with the relational inverse function. 

\begin{lemma}\label{lem:inv}
For all $\phi \in \mathcal{L}$, $\Mysim\Mysim \phi = \phi$.
\end{lemma}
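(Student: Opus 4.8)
The statement to prove is Lemma~\ref{lem:inv}: for all $\phi \in \mathcal{L}$, $\Mysim\Mysim\phi = \phi$. This is an involution claim about the dualizing map $\Mysim$ defined by the seven recursive clauses just above.

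The plan is to proceed by structural induction on the complexity of $\phi$, following exactly the recursive structure of the definition of $\Mysim$. First I would dispatch the base cases: for a propositional variable $p$, we have $\Mysim p = p$, so $\Mysim\Mysim p = \Mysim p = p$; for $\lambda$, since $\Mysim\lambda = \lambda$, the same reasoning gives $\Mysim\Mysim\lambda = \lambda$; for the two swapped constants, $\Mysim\Mysim\top = \Mysim\bot = \top$ and $\Mysim\Mysim\bot = \Mysim\top = \bot$. These cover all atoms and constants.

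For the inductive step I would assume the induction hypothesis that $\Mysim\Mysim\psi = \psi$ and $\Mysim\Mysim\chi = \chi$ for the immediate subformulas, and then verify each connective clause. For negation, $\Mysim\Mysim(\neg\psi) = \Mysim(\neg\Mysim\psi) = \neg\Mysim\Mysim\psi = \neg\psi$ by the hypothesis. The interesting cases are conjunction and disjunction, where the swap happens: $\Mysim\Mysim(\psi\wedge\chi) = \Mysim(\Mysim\psi \lor \Mysim\chi) = \Mysim\Mysim\psi \wedge \Mysim\Mysim\chi = \psi \wedge \chi$, using first the disjunction clause and then the conjunction clause to undo the swap, and finally the induction hypothesis. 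The disjunction case is entirely symmetric, applying the clauses in the opposite order.

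I expect no genuine obstacle here, since $\Mysim$ is visibly built so that each clause is its own inverse up to the $\wedge/\lor$ and $\top/\bot$ swaps, which are themselves involutions. The only point requiring a little care is making sure that in the binary cases the two swap clauses are applied in the correct order so that $\wedge$ returns to $\wedge$ and $\lor$ returns to $\lor$; this is what the computation above makes explicit. The proof is therefore routine, and I would likely state it compactly as ``a straightforward induction on the complexity of $\phi$, where the conjunction and disjunction cases rely on the fact that $\Mysim$ interchanges $\wedge$ and $\lor$,'' spelling out at most the binary step to reassure the reader that the double swap cancels.
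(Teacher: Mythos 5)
Your proof is correct and takes the same route as the paper, which simply states that the result follows ``by induction on the length of $\phi$'' without spelling out the cases. Your worked-out base cases and the explicit check that the $\wedge/\lor$ and $\top/\bot$ swaps cancel in the inductive step are exactly the details the paper leaves implicit.
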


\begin{proof}
    The proof is by induction on the length of $\phi$.
\end{proof}






\begin{lemma}\label{lem:asso}
    For all $\mathbb{X} \subseteq \textit{INF}(\mathcal{L})$, $(\Mysim \mathbb{X})^{-1}= \Mysim (\mathbb{X}^{-1})$.
\end{lemma}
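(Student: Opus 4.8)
The plan is to prove the identity by a direct element-chase, since both operations act on the two coordinates of an inference independently: $\Mysim$ transforms each side of a sequent componentwise, while relational inversion merely swaps the two sides. Because these actions touch disjoint structure, they must commute, and the whole proof reduces to unfolding the two definitions in sequence and checking that the resulting sets are literally the same.

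Concretely, I would first recall that an inference $\Gamma \Rightarrow \Delta$ is the ordered pair $\langle \Gamma, \Delta\rangle$, so that for any $\mathbb{X} \subseteq \textit{INF}(\mathcal{L})$ the relational inverse is $\mathbb{X}^{-1} = \lbrace \Delta \Rightarrow \Gamma : \Gamma \Rightarrow \Delta \in \mathbb{X}\rbrace$, and the action of $\Mysim$ on a set of inferences is $\Mysim(\mathbb{X}) = \lbrace \Mysim(\Gamma) \Rightarrow \Mysim(\Delta) : \Gamma \Rightarrow \Delta \in \mathbb{X}\rbrace$. Then I would fix an arbitrary pair $\langle A, B\rangle$ and trace the membership conditions on each side. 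On the left,
\[
\langle A, B\rangle \in (\Mysim\mathbb{X})^{-1} \iff \langle B, A\rangle \in \Mysim\mathbb{X} \iff \exists\, \Gamma \Rightarrow \Delta \in \mathbb{X} \text{ with } \Mysim(\Gamma) = B \text{ and } \Mysim(\Delta) = A.
\]
On the right,
\[
\langle A, B\rangle \in \Mysim(\mathbb{X}^{-1}) \iff \exists\, \Gamma' \Rightarrow \Delta' \in \mathbb{X}^{-1} \text{ with } \Mysim(\Gamma') = A \text{ and } \Mysim(\Delta') = B,
\]
and unfolding $\mathbb{X}^{-1}$ turns this into the existence of $\Delta' \Rightarrow \Gamma' \in \mathbb{X}$ with $\Mysim(\Gamma') = A$ and $\Mysim(\Delta') = B$. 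Relabelling $\Gamma := \Delta'$ and $\Delta := \Gamma'$ shows this is exactly the same condition as on the left, which establishes the two inclusions simultaneously and hence the equality.

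I do not expect any genuine obstacle here: the statement is purely definitional bookkeeping, and in particular it does not require the involutivity of $\Mysim$ from Lemma \ref{lem:inv} nor any semantic property of the logics. The only point to handle with minor care is the quantifier matching in the element-chase — making sure the witnessing inference on one side is the coordinate-swap of the witnessing inference on the other — which the relabelling above makes transparent.
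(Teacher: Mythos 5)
Your proof is correct, but it follows a genuinely different route from the paper's. The paper argues by a chain of biconditionals that passes through $\Mysim\Mysim\mathbb{X}=\mathbb{X}$ and $\Mysim\Mysim\Gamma \Rightarrow \Mysim\Mysim\Delta$, i.e., it invokes the involutivity of $\Mysim$ (Lemma \ref{lem:inv}) twice --- and its intermediate ``iff'' steps, which transport membership across an application of $\Mysim$ to a set of inferences, tacitly rely on the injectivity of $\Mysim$ that involutivity supplies. Your element-chase dispenses with all of this: by keeping the existential quantifier of the image-set definition $\Mysim(\mathbb{X})=\lbrace \Mysim(\Gamma)\Rightarrow\Mysim(\Delta) : \Gamma\Rightarrow\Delta\in\mathbb{X}\rbrace$ explicit and relabelling the witnessing inference, you never use any property of $\Mysim$ at all, exactly as you claim. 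What your argument buys is generality and a cleaner isolation of what the lemma really says: coordinatewise application of \emph{any} map on (sets of) formulas commutes with swapping the coordinates of an inference, so the identity would survive even if $\Mysim$ were replaced by an arbitrary substitution. What the paper's version buys is economy in context: Lemma \ref{lem:inv} is already proved and is needed elsewhere anyway (e.g., in Lemma \ref{lem:ext}), so the authors can cancel $\Mysim\Mysim$ freely and write a very short equivalence chain without tracking witnesses. Both proofs are sound.
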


\begin{proof}\leavevmode
    \begin{align*}
        \Gamma \Rightarrow \Delta \in (\Mysim \mathbb{X})^{-1}\quad &\textup{iff} \quad \Delta \Rightarrow \Gamma \in \Mysim\mathbb{X}\\
        &\textup{iff} \quad \Mysim\Delta \Rightarrow \Mysim\Gamma \in \Mysim\Mysim\mathbb{X}=\mathbb{X} \ \textup{by Lemma \ref{lem:inv}}\\
        &\textup{iff} \quad \Mysim\Gamma \Rightarrow \Mysim\Delta \in \mathbb{X}^{-1}\\
        &\textup{iff} \quad \Mysim\Mysim\Gamma \Rightarrow \Mysim\Mysim\Delta \in \Mysim(\mathbb{X}^{-1})\\
        &\textup{iff} \quad \Gamma \Rightarrow \Delta \in \Mysim(\mathbb{X}^{-1}) \ \textup{by Lemma \ref{lem:inv}}.
    \end{align*}
\end{proof}

To show that $\mathsf{K}_3$ and $\mathsf{LP}$ are extensionally interdefinable, we will have to prove first that they are operationally dual. The next lemma describes the operational duality between two formulae in terms of dual valuations. Two SK valuations $v$ and $v'$ are said to be dual when for all $p \in \textit{Var}$, $v'(p)= 1 - v(p)$. The dual of a valuation $v$ will be noted $\val$.

\begin{lemma}
   For all valuations $v$ and all $\phi \in \mathcal{L}$, $v(\phi) = 0$ if and only if $v^{\star}(\Mysim\phi) = 1$ and $v(\phi) = 1$ if and only if $v^{\star}(\Mysim\phi) = 0$.
\end{lemma}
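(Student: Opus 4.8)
The plan is to prove both biconditionals simultaneously by induction on the structure (length) of $\phi$, since the negation clause will force the two statements to swap roles and so they cannot be proved independently. The key observation driving everything is that the dual valuation satisfies $v^{\star}(p) = 1 - v(p)$ for atoms, and that $\Mysim$ swaps $\wedge$ with $\lor$ and $\top$ with $\bot$ while fixing $\lambda$ and commuting with $\neg$. So at each connective the ``flip'' in truth-value induced by passing from $v$ to $v^{\star}$ is exactly compensated by the ``flip'' in the connective induced by $\Mysim$, together with the arithmetic fact that $\min$ and $\max$ interchange under $x \mapsto 1-x$.

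First I would handle the base cases. For $p \in \textit{Var}$, we have $\Mysim p = p$, and $v^{\star}(p) = 1 - v(p)$ by definition of the dual valuation, so $v(\phi)=0$ iff $v^{\star}(p)=1$ and $v(\phi)=1$ iff $v^{\star}(p)=0$ are immediate. The constants $\top, \bot, \lambda$ are checked directly: $v(\top)=1$ and $v^{\star}(\Mysim\top)=v^{\star}(\bot)=0$; symmetrically for $\bot$; and for $\lambda$ both sides are $\sfrac{1}{2}$, so neither antecedent $v(\lambda)=0$ nor $v(\lambda)=1$ ever holds and the biconditionals are vacuously true.

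For the inductive step I would treat the three connectives. For negation, $\Mysim\neg\phi = \neg\Mysim\phi$ and $v^{\star}(\neg\Mysim\phi) = 1 - v^{\star}(\Mysim\phi)$, so $v(\neg\phi)=0$ means $v(\phi)=1$, which by the induction hypothesis gives $v^{\star}(\Mysim\phi)=0$, hence $v^{\star}(\neg\Mysim\phi)=1$ as required; the other direction and the $v(\neg\phi)=1$ case are symmetric. For conjunction, using $\Mysim(\phi\wedge\psi)=\Mysim\phi\lor\Mysim\psi$: here $v(\phi\wedge\psi)=0$ iff $v(\phi)=0$ or $v(\psi)=0$, which by the induction hypothesis is equivalent to $v^{\star}(\Mysim\phi)=1$ or $v^{\star}(\Mysim\psi)=1$, i.e. $v^{\star}(\Mysim\phi\lor\Mysim\psi)=1$; and $v(\phi\wedge\psi)=1$ iff both conjuncts are $1$, equivalent to both duals being $0$, i.e. the dual disjunction being $0$. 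Disjunction is dual to this and handled the same way.

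The main obstacle—though really a bookkeeping one rather than a conceptual one—is that I must carry both biconditionals through the induction together and be careful that at the negation and conjunction/disjunction steps I invoke the correct half of the induction hypothesis (the ``$=0$'' half feeds the ``$=1$'' conclusion and vice versa). There is no genuinely hard content: the result is precisely the standard Strong-Kleene self-duality of $\min/\max$ under the involution $x\mapsto 1-x$, transported through $\Mysim$ and the dual valuation. The only subtlety worth flagging is that the biconditionals speak only about the classical values $0$ and $1$; the value $\sfrac{1}{2}$ is handled implicitly, since whenever $v(\phi)=\sfrac{1}{2}$ both antecedents fail and, by Lemma \ref{lem:inv} together with the atom clause, $v^{\star}(\Mysim\phi)=\sfrac{1}{2}$ as well, so no conflict arises.
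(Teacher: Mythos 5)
Your proof is correct and takes essentially the same approach as the paper, which simply states ``the proof is by induction on the length of $\phi$'' and leaves the details implicit; your case analysis (atoms and constants as base cases, the swap of the two biconditional halves at negation, and the $\min$/$\max$ interchange at conjunction and disjunction) is exactly what that induction amounts to. One tiny remark: your closing appeal to Lemma \ref{lem:inv} is unnecessary, since $v(\phi)=\sfrac{1}{2}$ iff $v^{\star}(\Mysim\phi)=\sfrac{1}{2}$ already follows from the two biconditionals themselves once proved.
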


\begin{proof}
    The proof is by induction on the length of $\phi$.
\end{proof}

As highlighted earlier, $\mathsf{K}_3$ and $\mathsf{LP}$ are negation dual to each other, and $\mathsf{ST}$ and $\mathsf{TS}$ are negation self-dual. We prove here that in a similar way $\mathsf{K}_3$ and $\mathsf{LP}$ are operationally dual, and $\mathsf{ST}$ and $\mathsf{TS}$ operationally self-dual.

\begin{proposition}\label{thm:oduaK3LP}
    $\models_{\mathsf{K}_3} \Gamma \Rightarrow \Delta$ if and only if $\models_{\mathsf{LP}} \Mysim(\Delta) \Rightarrow \Mysim(\Gamma)$ and $\models_{\mathsf{LP}} \Gamma \Rightarrow \Delta$ if and only if $\models_{\mathsf{K}_3} \Mysim(\Delta) \Rightarrow \Mysim(\Gamma)$.
\end{proposition}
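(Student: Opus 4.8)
The plan is to leverage the preceding (unlabeled) lemma relating a valuation $v$ to its dual $\val$, together with the observation that the map $v \mapsto \val$ is an involution on SK-valuations (since $v(p)=1-(1-v(p))$) and hence a bijection on the set of all SK-valuations. First I would extract from that lemma the two equivalences actually needed at the level of the designated value $1$: the lemma already gives $v(\phi)=1$ iff $\val(\Mysim\phi)=0$, and contraposing this yields $v(\phi)\neq 1$ iff $\val(\Mysim\phi)\neq 0$. These are precisely the clauses occurring in the $\mathsf{K}_3$ and $\mathsf{LP}$ satisfaction conditions: preservation of the value $1$ for $\mathsf{K}_3$, and preservation of non-$0$ for $\mathsf{LP}$.

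For the first biconditional I would argue by contraposition. Unfolding the definitions, $\not\models_{\mathsf{K}_3}\Gamma\Rightarrow\Delta$ means there is a valuation $v$ with $v(\gamma)=1$ for all $\gamma\in\Gamma$ and $v(\delta)\neq 1$ for all $\delta\in\Delta$. Setting $w:=\val$ and applying the two equivalences termwise, this is equivalent to $w(\Mysim\gamma)=0$ for all $\gamma\in\Gamma$ and $w(\Mysim\delta)\neq 0$ for all $\delta\in\Delta$; that is, $w$ assigns $0$ to every member of $\Mysim(\Gamma)$ and a non-zero value to every member of $\Mysim(\Delta)$, which is exactly a witness to $\not\models_{\mathsf{LP}}\Mysim(\Delta)\Rightarrow\Mysim(\Gamma)$. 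Because $v\mapsto\val$ is a bijection, the existence of a $\mathsf{K}_3$-counterexample $v$ and the existence of an $\mathsf{LP}$-counterexample $w$ are interderivable (take $v=w^\star$ for the converse direction), so $\not\models_{\mathsf{K}_3}\Gamma\Rightarrow\Delta$ iff $\not\models_{\mathsf{LP}}\Mysim(\Delta)\Rightarrow\Mysim(\Gamma)$, and contraposing gives the stated equivalence.

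For the second biconditional I would avoid repeating the argument and instead instantiate the first one with $\Mysim(\Delta)$ in place of $\Gamma$ and $\Mysim(\Gamma)$ in place of $\Delta$. This yields $\models_{\mathsf{K}_3}\Mysim(\Delta)\Rightarrow\Mysim(\Gamma)$ iff $\models_{\mathsf{LP}}\Mysim\Mysim(\Gamma)\Rightarrow\Mysim\Mysim(\Delta)$, and by the involutivity of $\Mysim$ (Lemma \ref{lem:inv}) the right-hand side collapses to $\models_{\mathsf{LP}}\Gamma\Rightarrow\Delta$, which is precisely what is wanted.

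I expect no genuine obstacle: once the value-correspondence lemma is available, the whole argument is bookkeeping. The one point requiring care is verifying that the designated-value clauses line up correctly under dualization — that $\mathsf{K}_3$'s ``value $1$'' on the premises matches $\mathsf{LP}$'s ``value $0$'' on the flipped conclusions, and $\mathsf{K}_3$'s ``not $1$'' on the conclusions matches $\mathsf{LP}$'s ``not $0$'' on the flipped premises — so the thing to get right is the direction in which premises and conclusions are swapped when passing from $\Gamma\Rightarrow\Delta$ to $\Mysim(\Delta)\Rightarrow\Mysim(\Gamma)$.
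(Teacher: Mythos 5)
Your proof is correct and takes essentially the same route as the paper: both rest on the value-correspondence lemma relating $v$ to its dual $\val$, with your version merely recasting the paper's direct satisfaction argument as a counterexample transfer and making explicit the bijectivity of $v \mapsto \val$ that the paper uses tacitly. Your derivation of the second biconditional by instantiating the first at $\Mysim(\Delta), \Mysim(\Gamma)$ and invoking involutivity of $\Mysim$ is a tidy alternative to the paper's ``proved symmetrically,'' but it is the same underlying argument.
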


\begin{proof}
    Assume $\models_{\mathsf{K}_3} \Gamma \Rightarrow \Delta$ and $\val(\Mysim \delta) \neq 0$ for all $\delta \in \Delta$. By the preceding lemma, $v(\delta) \neq 1$ for all $\delta \in \Delta$, so $v(\gamma) \neq 1$ for some $\gamma \in \Gamma$. Again, by the preceding lemma, $\val(\Mysim \gamma) \neq 0$ for some $\gamma \in \Gamma$, and therefore $\models_{\mathsf{LP}} \Mysim(\Delta) \Rightarrow \Mysim(\Gamma)$. Assume now $\models_{\mathsf{LP}} \Mysim(\Delta) \Rightarrow \Mysim(\Gamma)$ and $v(\gamma) =1$ for all $\gamma \in \Gamma$. By the preceding lemma, $\val(\Mysim\gamma) = 0$ for all $\gamma \in \Gamma$, so $\val(\Mysim\delta) = 0$ for some $\delta \in \Delta$. Again, by the preceding lemma, $v( \delta) = 1$ for some $\delta \in \Delta$, and therefore $\models_{\mathsf{K}_3} \Gamma \Rightarrow \Delta$. The second equivalence can be proved symmetrically.
\end{proof}

\begin{proposition}\label{thm:oduaSTTS}
    $\models_{\mathsf{ST}} \Gamma \Rightarrow \Delta$ if and only if $\models_{\mathsf{ST}} \Mysim(\Delta) \Rightarrow \Mysim(\Gamma)$ and $\models_{\mathsf{TS}} \Gamma \Rightarrow \Delta$ if and only if $\models_{\mathsf{TS}} \Mysim(\Delta) \Rightarrow \Mysim(\Gamma)$.
\end{proposition}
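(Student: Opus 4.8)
The plan is to establish both equivalences in Proposition \ref{thm:oduaSTTS} directly from the semantic definitions of $\mathsf{ST}$- and $\mathsf{TS}$-validity, using the key duality lemma relating $v(\phi)$ and $\val(\Mysim\phi)$ (the unnamed lemma immediately preceding Proposition \ref{thm:oduaK3LP}). The structure mirrors the proof of Proposition \ref{thm:oduaK3LP} very closely, except that here the source and target logics coincide ($\mathsf{ST}$ with itself, $\mathsf{TS}$ with itself), reflecting operational self-duality rather than cross-duality.

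First I would prove the $\mathsf{ST}$ equivalence. For the left-to-right direction, assume $\models_{\mathsf{ST}} \Gamma \Rightarrow \Delta$ and suppose $\val(\Mysim\delta) = 1$ for all $\delta \in \Delta$; I want to derive $\val(\Mysim\gamma) \neq 0$ for some $\gamma \in \Gamma$. By the duality lemma, $\val(\Mysim\delta) = 1$ gives $v(\delta) = 0$ for all $\delta \in \Delta$. Since $\models_{\mathsf{ST}} \Gamma \Rightarrow \Delta$ means that $v(\gamma) = 1$ for all $\gamma$ implies $v(\delta) \neq 0$ for some $\delta$, the contrapositive gives that $v(\delta) = 0$ for all $\delta \in \Delta$ forces $v(\gamma) \neq 1$ for some $\gamma \in \Gamma$. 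Applying the duality lemma again (in the form: $v(\gamma) \neq 1$ iff $\val(\Mysim\gamma) \neq 0$), I obtain $\val(\Mysim\gamma) \neq 0$ for some $\gamma$, i.e. $\val \models_{\mathsf{ST}} \Mysim(\Delta) \Rightarrow \Mysim(\Gamma)$. Since $v \mapsto \val$ ranges over all valuations as $v$ does (duality is an involution on valuations), this yields $\models_{\mathsf{ST}} \Mysim(\Delta) \Rightarrow \Mysim(\Gamma)$. The converse direction is symmetric, and can alternatively be obtained for free by applying the direction just proved to $\Mysim(\Delta) \Rightarrow \Mysim(\Gamma)$ together with the involutivity of $\Mysim$ (Lemma \ref{lem:inv}), since $\Mysim\Mysim(\Gamma) = \Gamma$ and $\Mysim\Mysim(\Delta) = \Delta$.

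The $\mathsf{TS}$ equivalence follows the identical pattern, simply swapping the roles of the designated sets. For left-to-right, assuming $\models_{\mathsf{TS}} \Gamma \Rightarrow \Delta$ and $\val(\Mysim\gamma) \neq 0$ for all $\gamma \in \Gamma$, the duality lemma converts this to $v(\gamma) \neq 1$ for all $\gamma$; the $\mathsf{TS}$ condition (contrapositively: $v(\delta) \neq 1$ for all $\delta$ forces $v(\gamma) = 0$ for some $\gamma$) is the wrong orientation, so I would instead read off directly that $v(\gamma) \neq 1$ for all $\gamma$ does not by itself trigger anything — the cleaner route is again the contrapositive of $\mathsf{TS}$-validity applied to the valuation $v$: since $\models_{\mathsf{TS}}\Gamma\Rightarrow\Delta$ says $v(\gamma)\neq 0$ for all $\gamma$ implies $v(\delta)=1$ for some $\delta$, I start instead from $\val(\Mysim\delta)=1$ for all $\delta$, convert to $v(\delta)=0$ for all $\delta$, and so on. The bookkeeping of which of $0,1$ and $\neq 0,\neq 1$ appears on each side is the one genuinely delicate point, so I would lay out the two clauses of the duality lemma explicitly before chaining them.

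The only real obstacle is making sure the quantifier and value-conditions line up correctly under the duality lemma, given that $\mathsf{ST}$ and $\mathsf{TS}$ use \emph{mixed} designated sets (strict on one side, tolerant on the other) rather than the uniform ones of $\mathsf{K}_3$ and $\mathsf{LP}$; but precisely because the mixing is symmetric under swapping $0 \leftrightarrow 1$, each logic maps to itself rather than to its partner. Apart from this case-tracking, the argument is routine. I would write it as a single proof handling the $\mathsf{ST}$ case in full and remarking that the $\mathsf{TS}$ case is symmetric, exactly as done for Proposition \ref{thm:oduaK3LP}, and invoking Lemma \ref{lem:inv} to halve the work on each converse direction.
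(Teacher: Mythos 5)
Your $\mathsf{ST}$ half is correct and is essentially the paper's own proof: the same chaining of the duality lemma with the contrapositive of $\mathsf{ST}$-validity (the paper handles the converse direction directly rather than via involutivity, but your shortcut through Lemma \ref{lem:inv} is sound). The genuine problem is in the $\mathsf{TS}$ half. In the dualized sequent $\Mysim(\Delta) \Rightarrow \Mysim(\Gamma)$ the \emph{premises} are the formulas $\Mysim\delta$ with $\delta \in \Delta$, and for $\mathsf{TS}$-satisfaction the hypothesis on them is \emph{tolerant} truth: $\val(\Mysim\delta) \neq 0$ for all $\delta \in \Delta$. Your sketch first places the hypothesis on the wrong set ($\val(\Mysim\gamma)\neq 0$ for all $\gamma \in \Gamma$) --- which is exactly why the contrapositive of $\mathsf{TS}$-validity looked to you like ``the wrong orientation'' --- and then, in trying to repair this, lands on the wrong condition: you propose to start from $\val(\Mysim\delta) = 1$ for all $\delta \in \Delta$. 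That is the $\mathsf{ST}$ premise condition (strict truth), not the $\mathsf{TS}$ one. Starting from this strictly stronger hypothesis proves only a weaker implication: it says nothing about valuations under which some $\Mysim\delta$ takes value $\sfrac{1}{2}$ while none takes value $0$, and those valuations must also $\mathsf{TS}$-satisfy the dualized sequent. So as written, the $\mathsf{TS}$ case does not go through.

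The fix is precisely the contrapositive you dismissed, once the premise/conclusion roles are sorted. Assume $\val(\Mysim\delta) \neq 0$ for all $\delta \in \Delta$; by the clause $v(\phi)=1$ iff $\val(\Mysim\phi)=0$ of the duality lemma, this is equivalent to $v(\delta) \neq 1$ for all $\delta \in \Delta$; the contrapositive of $\models_{\mathsf{TS}} \Gamma \Rightarrow \Delta$ applied to $v$ then yields $v(\gamma) = 0$ for some $\gamma \in \Gamma$; and the other clause ($v(\phi)=0$ iff $\val(\Mysim\phi)=1$) converts this to $\val(\Mysim\gamma) = 1$ for some $\gamma \in \Gamma$, which is exactly the required strict truth of some conclusion of $\Mysim(\Delta) \Rightarrow \Mysim(\Gamma)$. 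With that correction the $\mathsf{TS}$ case is perfectly parallel to your (correct) $\mathsf{ST}$ case, which is presumably why the paper disposes of it with ``can be proved similarly.''
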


\begin{proof}
   Assume $\models_{\mathsf{ST}} \Gamma \Rightarrow \Delta$ and $\val(\Mysim \delta) =1$ for all $\delta \in \Delta$. By the preceding lemma, $v(\delta) = 0$ for all $\delta \in \Delta$, so $v(\gamma) \neq 1$ for some $\gamma \in \Gamma$. Again, by the preceding lemma, $\val(\Mysim \gamma) \neq 0$ for some $\gamma \in \Gamma$, and therefore $\models_{\mathsf{ST}} \Mysim(\Delta) \Rightarrow \Mysim(\Gamma)$. Assume now $\models_{\mathsf{ST}} \Mysim(\Delta) \Rightarrow \Mysim(\Gamma)$ and $v(\gamma) =1$ for all $\gamma \in \Gamma$. By the preceding lemma, $\val(\Mysim\gamma) =0$ for all $\gamma \in \Gamma$, so $\val(\Mysim\delta) \neq 1$ for some $\delta \in \Delta$. Again, by the preceding lemma, $v(\delta) \neq 0$ for some $\delta \in \Delta$, and therefore $\models_{\mathsf{ST}} \Gamma \Rightarrow \Delta$. The case of $\mathsf{TS}$ can be proved similarly.
\end{proof}

\begin{defn}[Operational duality for logics]
   For any two logics $\mathsf{L}_1$ and $\mathsf{L}_2$ based on the language of Definition \ref{language2}, we will say that $\mathsf{L}_1$ is operationally dual to $\mathsf{L}_2$ when $\models_{\mathsf{L}_1} \Gamma \Rightarrow \Delta$ if and only if $\models_{\mathsf{L}_2} \Mysim(\Delta) \Rightarrow \Mysim(\Gamma)$ for all $\Gamma, \Delta \subseteq \mathcal{L}$.  
\end{defn}

Given this terminology, $\mathsf{K}_3$ and $\mathsf{LP}$ are therefore operationally dual, while $\mathsf{ST}$ and $\mathsf{TS}$ are both self-dual.
 

It remains to show that such notion of duality licenses an extensional interdefinability between a logic and its operational dual. This is proven in the next lemma.

\begin{lemma}\label{lem:ext}
Given two logics $\mathsf{L}_1$ and $\mathsf{L}_2$ defined on the language of Definition \ref{language2}, $\mathsf{L}_1$ is operationally dual to $\mathsf{L}_2$ if and only if $\mathbb{L}_1^{+}=\Mysim(\mathbb{L}_2^{+})^{-1}$.
\end{lemma}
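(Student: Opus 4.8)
The plan is to prove the biconditional by reducing both sides to a single common statement expressed purely in terms of validity in $\mathsf{L}_1$ and $\mathsf{L}_2$. The key observation is that membership in $\Mysim(\mathbb{L}_2^{+})^{-1}$ can be characterized directly, so that the set identity $\mathbb{L}_1^{+}=\Mysim(\mathbb{L}_2^{+})^{-1}$ becomes a universally quantified biconditional about validity that coincides verbatim with the definition of operational duality.

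First I would unfold the right-hand side. By the definition of $\Mysim$ on inference sets, $\Mysim(\mathbb{L}_2^{+}) = \lbrace \Mysim(\Gamma) \Rightarrow \Mysim(\Delta) : \models_{\mathsf{L}_2} \Gamma \Rightarrow \Delta \rbrace$, and taking the relational inverse yields $\Mysim(\mathbb{L}_2^{+})^{-1} = \lbrace \Mysim(\Delta) \Rightarrow \Mysim(\Gamma) : \models_{\mathsf{L}_2} \Gamma \Rightarrow \Delta \rbrace$. (One may equivalently obtain this via Lemma~\ref{lem:asso}, since the order in which $\Mysim$ and $(\cdot)^{-1}$ are applied is immaterial.) Next I would use that $\Mysim$ is a bijection on $\mathcal{L}$, hence on finite subsets, with $\Mysim\Mysim = \mathrm{id}$ by Lemma~\ref{lem:inv}: given an arbitrary inference $\Gamma' \Rightarrow \Delta'$, it lies in $\Mysim(\mathbb{L}_2^{+})^{-1}$ exactly when $\Gamma' = \Mysim(\Delta)$ and $\Delta' = \Mysim(\Gamma)$ for some $\mathsf{L}_2$-valid $\Gamma \Rightarrow \Delta$; applying $\Mysim$ and Lemma~\ref{lem:inv} forces $\Delta = \Mysim(\Gamma')$ and $\Gamma = \Mysim(\Delta')$, so this witness is unique. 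Hence $\Gamma' \Rightarrow \Delta' \in \Mysim(\mathbb{L}_2^{+})^{-1}$ if and only if $\models_{\mathsf{L}_2} \Mysim(\Delta') \Rightarrow \Mysim(\Gamma')$.

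With this characterization in hand, the identity $\mathbb{L}_1^{+}=\Mysim(\mathbb{L}_2^{+})^{-1}$ holds iff for every inference $\Gamma \Rightarrow \Delta$ we have $\Gamma \Rightarrow \Delta \in \mathbb{L}_1^{+}$ iff $\Gamma \Rightarrow \Delta \in \Mysim(\mathbb{L}_2^{+})^{-1}$, which by the previous step is iff $\models_{\mathsf{L}_1} \Gamma \Rightarrow \Delta$ iff $\models_{\mathsf{L}_2} \Mysim(\Delta) \Rightarrow \Mysim(\Gamma)$. This last statement is precisely the definition of $\mathsf{L}_1$ being operationally dual to $\mathsf{L}_2$, so both directions of the lemma fall out at once from the same chain of equivalences.

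I expect no serious obstacle here: the content is entirely bookkeeping over the definition of $\Mysim$ on inference sets and of the relational inverse. The only point requiring genuine care is the uniqueness of the witness $\Gamma \Rightarrow \Delta$ when decoding membership in $\Mysim(\mathbb{L}_2^{+})^{-1}$, and this is exactly where the involutivity of $\Mysim$ (Lemma~\ref{lem:inv}) is needed: without it one could only extract the forward implication (``there exists a witness'') rather than the clean biconditional on which the whole argument turns. Everything else is routine.
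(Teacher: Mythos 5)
Your proof is correct and takes essentially the same route as the paper's: both reduce the set identity to the universally quantified biconditional defining operational duality via a chain of equivalences that relies on the definition of $\Mysim$ on inference sets, the relational inverse, and the involutivity of $\Mysim$ (Lemma \ref{lem:inv}). Your explicit witness-uniqueness step is exactly where the paper's proof implicitly uses injectivity of $\Mysim$ when passing membership claims back through the image set, so the two arguments match point for point.
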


\begin{proof}
    Assume that a logic $\mathsf{L}_1$ is operationally dual to a logic $\mathsf{L}_2$. Then $\Gamma \Rightarrow \Delta \in \mathbb{L}^{+}_1$ if and only if $\Mysim(\Delta) \Rightarrow \Mysim(\Gamma) \in \mathbb{L}^{+}_2$. Which is equivalent to $\Mysim(\Gamma) \Rightarrow \Mysim(\Delta) \in (\mathbb{L}_2^{+})^{-1}$, and in turn to $\Mysim\Mysim(\Gamma) \Rightarrow \Mysim\Mysim(\Delta) \in \Mysim(\mathbb{L}_2^{+})^{-1}$. Given that $\Mysim$ is involutive by Lemma \ref{lem:inv}, the previous statement amounts to $\Gamma \Rightarrow \Delta \in \Mysim(\mathbb{L}_2^{+})^{-1}$.

    Assume now that $\mathbb{L}_1^{+}=\Mysim(\mathbb{L}_2^{+})^{-1}$. We have that $\Gamma \Rightarrow \Delta \in \mathbb{L}_1^{+}$ if and only if $\Gamma \Rightarrow \Delta \in \Mysim(\mathbb{L}_2^{+})^{-1}$. But this is equivalent to  $\Mysim\Mysim\Gamma \Rightarrow \Mysim\Mysim\Delta \in \Mysim(\mathbb{L}_2^{+})^{-1}$ since $\Mysim$ is involutive. Which is to say that $\Mysim\Gamma \Rightarrow \Mysim\Delta \in (\mathbb{L}_2^{+})^{-1}$, and in turn that $\Mysim\Delta \Rightarrow \Mysim\Gamma \in \mathbb{L}_2^{+}$.
\end{proof}

With this lemma at hand, we are in a position to characterize extensionally each of the logics discussed in terms of their operational dual.

\begin{theorem}\label{thm:ext}
\[\arraycolsep=10pt\def\arraystretch{2}
\begin{array}{cc}    
    \mathbb{K}_3^{+} = \Mysim (\mathbb{LP}^{+})^{-1} & \mathbb{LP}^{+} = \Mysim (\mathbb{K}_3^{+})^{-1}\\
    
    \mathbb{ST}^{+} = \Mysim (\mathbb{ST}^{+})^{-1} & \mathbb{TS}^{+} = \Mysim(\mathbb{TS}^{+})^{-1}
   \end{array}\]
\end{theorem}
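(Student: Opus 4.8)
The plan is to read off all four identities as immediate consequences of Lemma \ref{lem:ext}, once the relevant operational dualities have been supplied by Propositions \ref{thm:oduaK3LP} and \ref{thm:oduaSTTS}. Lemma \ref{lem:ext} states that $\mathsf{L}_1$ is operationally dual to $\mathsf{L}_2$ exactly when $\mathbb{L}_1^{+}=\Mysim(\mathbb{L}_2^{+})^{-1}$, so each line of the table is obtained by instantiating $\mathsf{L}_1$ and $\mathsf{L}_2$ appropriately and then transferring the operational-duality statement across the biconditional.

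Concretely, for the first row I would take $\mathsf{L}_1=\mathsf{K}_3$ and $\mathsf{L}_2=\mathsf{LP}$: Proposition \ref{thm:oduaK3LP} gives that $\mathsf{K}_3$ is operationally dual to $\mathsf{LP}$, and Lemma \ref{lem:ext} then yields $\mathbb{K}_3^{+}=\Mysim(\mathbb{LP}^{+})^{-1}$. Swapping the roles, with $\mathsf{L}_1=\mathsf{LP}$ and $\mathsf{L}_2=\mathsf{K}_3$, the second equivalence of Proposition \ref{thm:oduaK3LP} supplies the operational duality in the other direction, and Lemma \ref{lem:ext} delivers $\mathbb{LP}^{+}=\Mysim(\mathbb{K}_3^{+})^{-1}$. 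For the second row I would set $\mathsf{L}_1=\mathsf{L}_2=\mathsf{ST}$ and then $\mathsf{L}_1=\mathsf{L}_2=\mathsf{TS}$: Proposition \ref{thm:oduaSTTS} establishes that each of $\mathsf{ST}$ and $\mathsf{TS}$ is operationally self-dual, and a final appeal to Lemma \ref{lem:ext} produces $\mathbb{ST}^{+}=\Mysim(\mathbb{ST}^{+})^{-1}$ and $\mathbb{TS}^{+}=\Mysim(\mathbb{TS}^{+})^{-1}$.

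In this sense there is no genuine obstacle left at the level of the theorem itself: all the mathematical content has already been discharged upstream, and the theorem is essentially a bookkeeping consolidation. The real work sits in Lemma \ref{lem:ext}, whose proof turns on the involutivity of $\Mysim$ (Lemma \ref{lem:inv}) and its commutation with the relational-inverse operation (Lemma \ref{lem:asso}), and in Propositions \ref{thm:oduaK3LP} and \ref{thm:oduaSTTS}, which rest on the valuation-level duality relating $v(\phi)$ to $\val(\Mysim\phi)$. The one point I would be careful to check is that the self-dual cases are legitimate instances of Lemma \ref{lem:ext} --- that is, that the lemma is stated for arbitrary pairs $\langle \mathsf{L}_1,\mathsf{L}_2\rangle$ and therefore applies unchanged when $\mathsf{L}_1=\mathsf{L}_2$ --- so that the operational self-duality of $\mathsf{ST}$ and $\mathsf{TS}$ feeds into it without any special adaptation.
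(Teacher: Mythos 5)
Your proposal is correct and matches the paper's own proof exactly: the paper derives all four identities directly from Lemma \ref{lem:ext} together with Propositions \ref{thm:oduaK3LP} and \ref{thm:oduaSTTS}, which is precisely your instantiation strategy (including the self-dual cases $\mathsf{L}_1=\mathsf{L}_2$, which Lemma \ref{lem:ext} covers since it is stated for arbitrary pairs of logics). Your version simply spells out the bookkeeping that the paper leaves implicit.
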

\begin{proof}\leavevmode
This follows directly from Lemma \ref{lem:ext}, Proposition \ref{thm:oduaK3LP} and Proposition \ref{thm:oduaSTTS}.
\end{proof}

The composition $\Mysim \circ ^{-1}$ is therefore a function mapping a set of valid inferences of a logic to the set of valid inferences of its operational dual. $\mathbb{LP}^{+}$ is mapped to $\mathbb{K}_3^{+}$, $\mathbb{K}_3^{+}$ to 
$\mathbb{LP}^{+}$, while both $\mathbb{ST}^{+}$ and $\mathbb{TS}^{+}$ are fixed points of the composition. 

\subsection{Structural Duality}

In the second half of this section, we combine our proposed notion of operational duality with the notion of structural duality to show that $\mathsf{ST}$ and $\mathsf{TS}$ are each other's dual under this combined notion, in the same way as $\mathsf{K}_3$ and $\mathsf{LP}$ are each other's dual. In addition, the results of sections \ref{sec:st} and \ref{sec:ts} will allow for an extensional characterization of the sets of $\mathsf{ST}$- and $\mathsf{TS}$-valid inferences solely from the set of $\mathsf{K}_3$- or $\mathsf{LP}$-valid inferences. Along the way, we will prove a similar duality result for the sets of $\mathsf{K}_3$ and $\mathsf{LP}$ \textit{antivalid} inferences, and see that the operations of relative product and relative sum can be used to characterize the sets of $\mathsf{ST}$ and $\mathsf{TS}$ antivalid inferences out of the sets of $\mathsf{LP}$ and $\mathsf{K}_3$ antivalid inferences.

The notion of \textit{antivalidity} used throughout this section was first introduced by \cite{cobreros2021cant} to account for the notion of structural duality. It is defined as follows:

\begin{defn}[$\mathsf{L}$-antisatisfaction, $\mathsf{L}$-antivalidity]
Let $\mathsf{L}$ be a logic based on a consequence relation defined on the standard $D_1$ for the premises and $D_2$ for the conclusions. 
\begin{itemize}
\item An SK-valuation $v$ \textit{antisatisfies} an inference $\Gamma \Rightarrow \Delta$ in $\mathsf{L}$ (symbolized by $v \nmodels_{\mathsf{L}} \Gamma \Rightarrow \Delta$) if and only if $v(\gamma) \not\in D_1$ for all $\gamma \in \Gamma$ only if $v(\delta) \not\in D_2$ for some $\delta \in \Delta$. 
\item An inference $\Gamma \Rightarrow \Delta$ is \textit{antivalid} in $\mathsf{L}$ (symbolized by $\nmodels_{\mathsf{L}} \Gamma \Rightarrow \Delta)$ if and only if $v \nmodels_{\mathsf{L}} \Gamma \Rightarrow \Delta$ for all SK-valuations $v$.
\end{itemize}
\end{defn}

For instance, $\nmodels_{\mathsf{LP}} p \Rightarrow p \wedge q$, since for all $v$, if $v(p) \not\in \lbrace \sfrac{1}{2}, 1\rbrace$, then $v(p \wedge q) \not\in \lbrace \sfrac{1}{2}, 1\rbrace$; while $\not\nmodels_{\mathsf{ST}} p \Rightarrow p \wedge q$, because there is $v$ such that $v(p)=v(q)=v(p \wedge q) =\sfrac{1}{2}$, and $v(p) \neq 1$ but $v(p \wedge q) \in \lbrace \sfrac{1}{2}, 1\rbrace$.

\textcite{cobreros2021cant} say that a logic is structurally dual to another whenever, when an argument is valid in the first logic, the converse (or inverse) argument is antivalid in the second logic. Formally:


\begin{defn}[Structural duality]
    A logic $\mathsf{L}_1$ is structurally dual to a logic $\mathsf{L}_2$ when
   \[\begin{array}{ccc}
        \models_{\mathsf{L}_1} \Gamma \Rightarrow \Delta & \textup{iff} & \nmodels_{\mathsf{L}_2} \Delta \Rightarrow \Gamma
    \end{array}\]
\end{defn}

As noted before, structural duality implicitly involves a form of  negation in the metalanguage, more precisely, a form of metalinguistic contraposition. A notion of validity is expressed through a universally closed metalinguistic conditional statement connecting the satisfaction of an inference's premises to the satisfaction of its conclusions. In turn, antivalidity is obtained by contraposing this metalinguistic conditional while keeping fixed the sets of designated values. Structural duality between two logics $\mathsf{L}_1$ and $\mathsf{L}_2$ requires therefore an equivalence between two universally closed conditional statements: the first one connecting the $\mathsf{L}_1$-satisfaction of an inference's premises to the $\mathsf{L}_1$-satisfaction of its conclusions, and the second one connecting the non-$\mathsf{L}_2$-satisfaction of its conclusions to the non-$\mathsf{L}_2$-satisfaction of its premises.  

The following proposition is proven by Cobreros and associates:

\begin{proposition}[Structural Duals][\cite{cobreros2021cant}]\label{prop:dua}
\begin{align*}
    \models_{\mathsf{LP}} \Gamma \Rightarrow \Delta \ \textup{if and only if} \ \nmodels_{\mathsf{LP}} \Delta \Rightarrow \Gamma\\
    \models_{\mathsf{K}_3} \Gamma \Rightarrow \Delta \ \textup{if and only if} \ \nmodels_{\mathsf{K}_3} \Delta \Rightarrow \Gamma\\
    \models_{\mathsf{ST}} \Gamma \Rightarrow \Delta \ \textup{if and only if} \ \nmodels_{\mathsf{TS}} \Delta \Rightarrow \Gamma\\
    \models_{\mathsf{TS}} \Gamma \Rightarrow \Delta \ \textup{if and only if} \ \nmodels_{\mathsf{ST}} \Delta \Rightarrow \Gamma
\end{align*}
\end{proposition}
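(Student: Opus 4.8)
The plan is to prove all four equivalences uniformly, by descending to the level of individual valuations and exploiting the fact that, in each case, the satisfaction clause of the first logic and the antisatisfaction clause of the second logic are metalinguistic contrapositives of one another. Concretely, I would first show, for an arbitrary fixed SK-valuation $v$, that $v \models_{\mathsf{L}_1} \Gamma \Rightarrow \Delta$ holds if and only if $v \nmodels_{\mathsf{L}_2} \Delta \Rightarrow \Gamma$ holds, for the relevant pairs $(\mathsf{L}_1, \mathsf{L}_2) \in \{(\mathsf{LP},\mathsf{LP}),\, (\mathsf{K}_3,\mathsf{K}_3),\, (\mathsf{ST},\mathsf{TS}),\, (\mathsf{TS},\mathsf{ST})\}$. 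Since both validity and antivalidity are defined by universally quantifying the corresponding per-valuation notion over all $v$, the desired global equivalences follow at once by quantifying the per-valuation equivalence. Note that, unlike the operational-duality results, this argument keeps $v$ \emph{fixed} and does not pass to the dual valuation $\val$: structural duality is contraposition inside the metalanguage, not a move on the object-level valuations.

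To carry out the per-valuation step, I would unfold the two clauses and read them as material conditionals. For a logic $\mathsf{L}$ with designated sets $\langle D_1, D_2\rangle$, the clause $v \models_{\mathsf{L}} \Gamma \Rightarrow \Delta$ asserts $A \to B$, where $A$ is ``$v(\gamma) \in D_1$ for all $\gamma \in \Gamma$'' and $B$ is ``$v(\delta) \in D_2$ for some $\delta \in \Delta$''. The antisatisfaction clause $v \nmodels_{\mathsf{L}'} \Delta \Rightarrow \Gamma$ for a logic $\mathsf{L}'$ with designated sets $\langle D_1', D_2'\rangle$ asserts that ``$v(\delta) \notin D_1'$ for all $\delta \in \Delta$'' implies ``$v(\gamma) \notin D_2'$ for some $\gamma \in \Gamma$''. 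The key observation is then purely bookkeeping: whenever $D_1' = D_2$ and $D_2' = D_1$, the antecedent of the second conditional is exactly $\neg B$ and its consequent is exactly $\neg A$, so the second clause reads $\neg B \to \neg A$, the contrapositive of $A \to B$, and hence is equivalent to it.

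I would then verify that the designated sets match in this way for each of the four cases. For $\mathsf{LP}$ we have $D_1 = D_2 = T$, and for $\mathsf{K}_3$ we have $D_1 = D_2 = S$, so each logic is paired with itself and the condition $D_1' = D_2$, $D_2' = D_1$ holds trivially; this yields the two self-dualities. For the mixed systems, $\mathsf{ST}$ has $\langle D_1, D_2\rangle = \langle S, T\rangle$ while $\mathsf{TS}$ has $\langle T, S\rangle$, so that the consequent-set of $\mathsf{ST}$ is the antecedent-set of $\mathsf{TS}$ and vice versa; this is exactly what makes the contraposition of the $\mathsf{ST}$-clause for $\Gamma \Rightarrow \Delta$ land on the $\mathsf{TS}$-antisatisfaction clause for $\Delta \Rightarrow \Gamma$, and symmetrically with the roles of $\mathsf{ST}$ and $\mathsf{TS}$ reversed.

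I expect no genuine obstacle here: the argument is entirely a matter of unfolding definitions and tracking which designated set sits on which side of each conditional. The one point requiring care is the clerical one of checking, in the two cross cases, that ``$\notin D_1'$'' and ``$\notin D_2'$'' translate into the complements of $A$ and $B$ in the correct order — that is, that the swap of premise and conclusion sides in passing from $\Gamma \Rightarrow \Delta$ to $\Delta \Rightarrow \Gamma$ is matched consistently by the swap of the designated sets. Once this is confirmed for one mixed case, the other follows by the identical computation with $S$ and $T$ interchanged.
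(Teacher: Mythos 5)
Your proposal is correct and takes essentially the same approach as the paper: the paper's one-sentence proof observes that for any two mixed-consequence logics $\mathsf{XY}$ and $\mathsf{YX}$ with swapped standards, $\models_{\mathsf{XY}} \Gamma \Rightarrow \Delta$ iff $\nmodels_{\mathsf{YX}} \Delta \Rightarrow \Gamma$, which is precisely your per-valuation contraposition argument instantiated to the four pairs. Your write-up simply makes explicit the designated-set bookkeeping ($D_1' = D_2$, $D_2' = D_1$) that the paper leaves implicit.
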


\begin{proof}
    Note that if $\mathsf{XY}$ and $\mathsf{YX}$ are two logics defined on the same scheme for the connectives and $\mathsf{XY}$ is a logic based on a mixed consequence relation defined on the standard $X$ for the premises and $Y$ for the conclusions (and conversely for $\mathsf{YX}$), then $\models_{\mathsf{XY}} \Gamma \Rightarrow \Delta$ if and only if $\nmodels_{\mathsf{YX}} \Delta \Rightarrow \Gamma$.
\end{proof}

The set of antivalidities of a logic is defined in a similar way as the set of validities of a logic:

\begin{defn}[Set of antivalidities]\label{def:ant}
\begin{align*}
\mathbb{L}^{-} &= \lbrace \Gamma \Rightarrow \Delta  : \ \nmodels_{\mathsf{L}} \Gamma \Rightarrow \Delta \rbrace
\end{align*}
\end{defn}

Proposition \ref{prop:dua} can thus be rephrased in terms of inverse: the sets of $\mathsf{K}_3$- and $\mathsf{LP}$-antivalid inferences are respectively the inverses of the sets of $\mathsf{K}_3$- and $\mathsf{LP}$-valid inferences; while the sets of $\mathsf{ST}$- and $\mathsf{TS}-$antivalid inferences are respectively the inverses of the sets of $\mathsf{TS}$- and $\mathsf{ST}$-valid inferences.

\begin{corollary}[Inverse of $\mathbb{K}_3^{+}$, $\mathbb{LP}^{+}$, $\mathbb{ST}^{+}$ and $\mathbb{TS}^{+}$]\label{cor:inv}
    \begin{align*}
    \mathbb{LP}^{-} &= (\mathbb{LP}^{+})^{-1}\\
\mathbb{K}_3^{-} &= (\mathbb{K}_3^{+})^{-1}\\
\mathbb{TS}^{-} &= (\mathbb{ST}^{+})^{-1}\\
\mathbb{ST}^{-} &= (\mathbb{TS}^{+})^{-1}
\end{align*}
\end{corollary}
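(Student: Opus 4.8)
The plan is to read off each of the four identities directly from Proposition \ref{prop:dua}, unfolding only the definition of the relational inverse and the definition of the antivalidity set (Definition \ref{def:ant}). Recall that the inverse $\mathbb{X}^{-1}$ of a set of inferences is obtained by swapping antecedent and consequent, so that $\Gamma \Rightarrow \Delta \in \mathbb{X}^{-1}$ if and only if $\Delta \Rightarrow \Gamma \in \mathbb{X}$ --- exactly the reading already used in the proof of Lemma \ref{lem:asso}. Each identity then reduces to a short chain of equivalences.

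For the first identity I would argue:
\begin{align*}
\Gamma \Rightarrow \Delta \in (\mathbb{LP}^{+})^{-1} &\textup{ iff } \Delta \Rightarrow \Gamma \in \mathbb{LP}^{+}\\
&\textup{ iff } \models_{\mathsf{LP}} \Delta \Rightarrow \Gamma\\
&\textup{ iff } \nmodels_{\mathsf{LP}} \Gamma \Rightarrow \Delta\\
&\textup{ iff } \Gamma \Rightarrow \Delta \in \mathbb{LP}^{-},
\end{align*}
where the crucial third equivalence is the first line of Proposition \ref{prop:dua} (after relabelling $\Gamma$ and $\Delta$). The identity $\mathbb{K}_3^{-} = (\mathbb{K}_3^{+})^{-1}$ is obtained by the same chain, invoking the second line of Proposition \ref{prop:dua} instead.

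The two mixed cases run identically, except that the duality now swaps the logic as well. For $\mathbb{TS}^{-} = (\mathbb{ST}^{+})^{-1}$, I would chain $\Gamma \Rightarrow \Delta \in (\mathbb{ST}^{+})^{-1}$ iff $\models_{\mathsf{ST}} \Delta \Rightarrow \Gamma$ iff $\nmodels_{\mathsf{TS}} \Gamma \Rightarrow \Delta$ (third line of Proposition \ref{prop:dua}) iff $\Gamma \Rightarrow \Delta \in \mathbb{TS}^{-}$; and symmetrically for $\mathbb{ST}^{-} = (\mathbb{TS}^{+})^{-1}$ using the fourth line.

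Since every step is either a definitional unfolding or a direct appeal to Proposition \ref{prop:dua}, there is no genuine obstacle here: the corollary is precisely the promised reformulation of that proposition in terms of the inverse operation. The only point demanding care is bookkeeping --- making sure that in each case one invokes the correct line of Proposition \ref{prop:dua} and tracks the relabelling of $\Gamma$ and $\Delta$, so that the valid inference $\Delta \Rightarrow \Gamma$ is matched against the antivalid inference $\Gamma \Rightarrow \Delta$ for the appropriate logic.
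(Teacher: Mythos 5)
Your proof is correct and takes essentially the same route as the paper, whose entire proof is the one-line remark that the corollary follows from Proposition \ref{prop:dua}; you have merely made explicit the definitional unfolding of the inverse operation and of $\mathbb{L}^{-}$ that the paper leaves implicit. The chains of equivalences, including the relabelling of $\Gamma$ and $\Delta$ and the matching of each identity to the correct line of Proposition \ref{prop:dua}, are all accurate.
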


\begin{proof}
    This follows from Proposition \ref{prop:dua}.
\end{proof}

The results of Theorem \ref{thm:ext} can now be restated in light of these equalities.

\begin{theorem}\label{thm:extbis}
\[\arraycolsep=10pt\def\arraystretch{2}
\begin{array}{cc}    
    \mathbb{K}_3^{+} = \Mysim \mathbb{LP}^{-} & \mathbb{LP}^{+} = \Mysim \mathbb{K}_3^{-}\\
    
    \mathbb{ST}^{+} = \Mysim \mathbb{TS}^{-} & \mathbb{TS}^{+} = \Mysim\mathbb{ST}^{-}
   \end{array}\]
\end{theorem}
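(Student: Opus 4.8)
The plan is to obtain each of the four identities by substituting the equalities of Corollary \ref{cor:inv} into the corresponding identity of Theorem \ref{thm:ext}. Theorem \ref{thm:ext} already expresses each of $\mathbb{K}_3^{+}$, $\mathbb{LP}^{+}$, $\mathbb{ST}^{+}$ and $\mathbb{TS}^{+}$ as the image under $\Mysim$ of an \emph{inverse} set of validities, while Corollary \ref{cor:inv} identifies each of those inverse sets with a set of antivalidities. Hence no fresh reasoning about valuations or about the behaviour of $\Mysim$ is needed: the argument is purely a matter of rewriting, and all the substantive work has already been discharged in establishing Theorem \ref{thm:ext} (via Lemma \ref{lem:ext} and Propositions \ref{thm:oduaK3LP} and \ref{thm:oduaSTTS}) and Corollary \ref{cor:inv} (via Proposition \ref{prop:dua}).

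Concretely, I would proceed identity by identity. For the first, Theorem \ref{thm:ext} gives $\mathbb{K}_3^{+} = \Mysim (\mathbb{LP}^{+})^{-1}$, Corollary \ref{cor:inv} gives $(\mathbb{LP}^{+})^{-1} = \mathbb{LP}^{-}$, and substitution yields $\mathbb{K}_3^{+} = \Mysim \mathbb{LP}^{-}$; the identity for $\mathbb{LP}^{+}$ is symmetric, using $(\mathbb{K}_3^{+})^{-1} = \mathbb{K}_3^{-}$. For the two mixed logics I would take care to track the crossover that structural duality introduces: Theorem \ref{thm:ext} writes $\mathbb{ST}^{+} = \Mysim (\mathbb{ST}^{+})^{-1}$, but Corollary \ref{cor:inv} identifies $(\mathbb{ST}^{+})^{-1}$ with $\mathbb{TS}^{-}$ rather than $\mathbb{ST}^{-}$, so the substitution gives $\mathbb{ST}^{+} = \Mysim \mathbb{TS}^{-}$, and dually $\mathbb{TS}^{+} = \Mysim \mathbb{ST}^{-}$.

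There is no genuine obstacle here beyond this bookkeeping. The only point requiring attention is precisely the crossover just noted: the inverse of the $\mathsf{ST}$-validities is the set of $\mathsf{TS}$-antivalidities (and conversely), reflecting that $\mathsf{ST}$ and $\mathsf{TS}$ are structural duals of one another, whereas $\mathsf{K}_3$ and $\mathsf{LP}$ are each their own structural dual in the inverse sense of Corollary \ref{cor:inv}. Once this pairing is respected, the four identities of Theorem \ref{thm:extbis} follow immediately by transitivity of equality.
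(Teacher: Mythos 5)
Your proposal is correct and follows exactly the paper's own route: the paper proves Theorem \ref{thm:extbis} precisely by combining Corollary \ref{cor:inv} with Theorem \ref{thm:ext}, which is the substitution you carry out. Your explicit tracking of the crossover $(\mathbb{ST}^{+})^{-1} = \mathbb{TS}^{-}$ and $(\mathbb{TS}^{+})^{-1} = \mathbb{ST}^{-}$ is just the spelled-out version of what the paper leaves implicit.
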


\begin{proof}
    From Corollary \ref{cor:inv} and Theorem \ref{thm:ext}.
\end{proof}

A dual result can be given by interchanging the validities and the antivalidities:

\begin{theorem}\label{cor:ext-}
\[\arraycolsep=10pt\def\arraystretch{2}
\begin{array}{cc}    
    \mathbb{K}_3^{-} = \Mysim \mathbb{LP}^{+} & \mathbb{LP}^{-} = \Mysim \mathbb{K}_3^{+}\\
    
    \mathbb{ST}^{-} = \Mysim \mathbb{TS}^{+} & \mathbb{TS}^{-} = \Mysim\mathbb{ST}^{+}
   \end{array}\]
\end{theorem}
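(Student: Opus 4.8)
The plan is to read off each of the four identities directly from Theorem \ref{thm:extbis} by applying the operational-dual map $\Mysim$ to both sides of the corresponding equation there, and then cancelling the resulting double application of $\Mysim$ by involutivity. Theorem \ref{thm:extbis} records $\mathbb{K}_3^{+} = \Mysim \mathbb{LP}^{-}$, $\mathbb{LP}^{+} = \Mysim \mathbb{K}_3^{-}$, $\mathbb{ST}^{+} = \Mysim \mathbb{TS}^{-}$ and $\mathbb{TS}^{+} = \Mysim \mathbb{ST}^{-}$; the statement to be proved is obtained from each of these by interchanging the superscripts $+$ and $-$, and since $\Mysim$ is its own inverse, such an interchange is effected precisely by one further application of $\Mysim$.

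First I would establish the only ingredient not already on the table: that $\Mysim$ is involutive not merely on formulas but on sets of inferences, i.e. $\Mysim\Mysim\mathbb{X}=\mathbb{X}$ for every $\mathbb{X}\subseteq \textit{INF}(\mathcal{L})$. This is immediate from Lemma \ref{lem:inv}: by the definition of the extension of $\Mysim$ to inferences, $\Mysim\Mysim\mathbb{X}=\lbrace \Mysim\Mysim(\Gamma)\Rightarrow \Mysim\Mysim(\Delta) : \Gamma\Rightarrow\Delta\in\mathbb{X}\rbrace$, and each $\Mysim\Mysim(\Gamma)=\Gamma$ and $\Mysim\Mysim(\Delta)=\Delta$ by Lemma \ref{lem:inv} applied formula by formula (the same observation already used in the proof of Lemma \ref{lem:asso}).

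With this in hand, each identity is a one-line computation. Applying $\Mysim$ to $\mathbb{LP}^{+} = \Mysim \mathbb{K}_3^{-}$ gives $\Mysim\mathbb{LP}^{+} = \Mysim\Mysim\mathbb{K}_3^{-} = \mathbb{K}_3^{-}$, that is $\mathbb{K}_3^{-} = \Mysim\mathbb{LP}^{+}$; applying $\Mysim$ to $\mathbb{K}_3^{+} = \Mysim\mathbb{LP}^{-}$ gives $\mathbb{LP}^{-} = \Mysim\mathbb{K}_3^{+}$; applying $\Mysim$ to $\mathbb{TS}^{+} = \Mysim\mathbb{ST}^{-}$ gives $\mathbb{ST}^{-} = \Mysim\mathbb{TS}^{+}$; and applying $\Mysim$ to $\mathbb{ST}^{+} = \Mysim\mathbb{TS}^{-}$ gives $\mathbb{TS}^{-} = \Mysim\mathbb{ST}^{+}$. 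These are exactly the four entries of the display, so the theorem follows.

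There is essentially no obstacle here: the content of Theorem \ref{cor:ext-} is nothing more than Theorem \ref{thm:extbis} read ``in the other direction'', and the proof is the bookkeeping of a single involution. (One could equally bypass Theorem \ref{thm:extbis} and argue from Theorem \ref{thm:ext} together with Corollary \ref{cor:inv} and Lemma \ref{lem:asso}, but the route through Theorem \ref{thm:extbis} is the most economical.)
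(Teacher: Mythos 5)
Your proof is correct, but it takes a different (and leaner) route than the paper's. The paper proves only the first identity and declares the rest similar: it unwinds $\mathbb{K}_3^{-}$ to $(\mathbb{K}_3^{+})^{-1}$ via Corollary \ref{cor:inv}, substitutes Theorem \ref{thm:extbis} to get $(\Mysim((\mathbb{LP}^{+})^{-1}))^{-1}$, and then invokes Lemma \ref{lem:asso} to commute $\Mysim$ past the inverse and cancel the double inverse --- essentially the route you mention only parenthetically at the end. You instead apply $\Mysim$ directly to both sides of each equation of Theorem \ref{thm:extbis} and cancel $\Mysim\Mysim$ by involutivity; the one auxiliary fact you need, that $\Mysim\Mysim\mathbb{X}=\mathbb{X}$ for sets of inferences, is exactly the set-level consequence of Lemma \ref{lem:inv} that the paper itself already uses inside the proof of Lemma \ref{lem:asso}, so it is legitimately available. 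What your approach buys: it needs neither Corollary \ref{cor:inv} nor Lemma \ref{lem:asso}, it treats all four identities uniformly rather than by a representative case, and it makes transparent that Theorem \ref{cor:ext-} is literally the image of Theorem \ref{thm:extbis} under the involution $\Mysim$. What the paper's route buys is mainly expository: by passing through inverses it keeps the structural-duality content (antivalidities as inverses of validities) explicit in the computation, at the cost of invoking more machinery than is strictly required.
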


\begin{proof}
    We only prove the first equivalence, the proof of the others being similar. By Corollary \ref{cor:inv}, $\mathbb{K}_3^{-} =(\mathbb{K}_3^{+})^{-1}$, and by the previous theorem $(\mathbb{K}_3^{+})^{-1} = (\Mysim\mathbb{LP}^{-})^{-1} = (\Mysim((\mathbb{LP}^{+})^{-1}))^{-1}$. Finally, by Lemma $\ref{lem:asso}$, $\Mysim(((\mathbb{LP}^{+})^{-1})^{-1}) = \Mysim(\mathbb{LP}^{+})$.
\end{proof}

Although neither the revised notion of operational duality nor the notion of structural duality makes $\mathsf{K}_3$ and $\mathsf{LP}$ dual simultaneously with $\mathsf{ST}$ and $\mathsf{TS}$, the combination of the two leads to a unified extensional definition of $\mathsf{K}_3$ in terms of $\mathsf{LP}$ and of $\mathsf{ST}$ in terms of $\mathsf{TS}$ (and conversely). In the case of $\mathsf{K}_3$ and $\mathsf{LP}$, this combined notion of duality collapses with operational duality, since both are structurally self-dual; in the case of $\mathsf{ST}$ and $\mathsf{TS}$, the notion collapses with structural duality since both are operationally self-dual.

\subsection{Putting it all together}

We now turn to the characterization of the sets of $\mathsf{ST}$ and $\mathsf{TS}$ antivalid inferences out of the sets of $\mathsf{LP}$ and $\mathsf{K}_3$ antivalid inferences. It is known that the inverse of the relational product of two relations $R$ and $S$ is equal to the relational product of the inverse of $S$ and the inverse of $R$, and similarly for the relational sum \parencite[]{Maddux2006}. The next fact attests that this remains true for the modified versions of relative product and relative sum introduced in Definition \ref{DefRelP} and Definition \ref{DefRelS}.

\begin{fact}[Inverse product and inverse sum]\label{fct:inv}
\begin{align*}
(R \hspace{0.1em} \stackunder{$\mid$}{$\ms V$} \hspace{0.1em} S)^{-1} &= S^{-1} \hspace{0.1em} \stackunder{$\mid$}{$\ms V$} \hspace{0.1em} R^{-1}\\
(R \hspace{0.2em} \stackunder{$\dag$}{$\ms V$} \hspace{0.2em} S)^{-1} &= S^{-1} \hspace{0.1em} \stackunder{$\dag$}{$\ms V$} \hspace{0.1em} R^{-1}
\end{align*}
\end{fact}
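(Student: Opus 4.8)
The plan is to prove both identities by unfolding the definitions into a chain of biconditionals, exactly in the style of the proof of Fact~\ref{duality}. Each equality asserts the coincidence of two sets of ordered pairs, so it suffices to show that an arbitrary pair $\langle z, x\rangle$ lies in the left-hand side iff it lies in the right-hand side. The whole argument is then a mechanical substitution using only the definition of relational inverse, namely $\langle a,b\rangle \in T^{-1}$ iff $\langle b,a\rangle \in T$, together with Definitions~\ref{DefRelP} and~\ref{DefRelS}.

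For the product, I would start from $\langle z,x\rangle \in (R \mid S)^{-1}$, which by the inverse definition is $\langle x,z\rangle \in R \mid S$. Unfolding the relative product (Definition~\ref{DefRelP}) gives: there is some $y \in V$ with $\langle x,y\rangle \in R$ and $\langle y,z\rangle \in S$. The crucial move is to rewrite each conjunct through the inverse: $\langle x,y\rangle \in R$ iff $\langle y,x\rangle \in R^{-1}$, and $\langle y,z\rangle \in S$ iff $\langle z,y\rangle \in S^{-1}$. Reordering the conjunction so that the $S^{-1}$-clause comes first, this reads: there is $y \in V$ with $\langle z,y\rangle \in S^{-1}$ and $\langle y,x\rangle \in R^{-1}$, which is precisely $\langle z,x\rangle \in S^{-1} \mid R^{-1}$. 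The identity for the sum runs in parallel, with the existential quantifier and conjunction of Definition~\ref{DefRelP} replaced by the universal quantifier and disjunction of Definition~\ref{DefRelS}; every step is otherwise identical.

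I expect no genuine obstacle here --- the result is a routine piece of bookkeeping. The only point requiring care is to track that the inverse operation reverses the outer pair (swapping $x$ and $z$) and swaps the two relations (sending $R$ to the right factor and $S$ to the left), while the middle element $y$ stays fixed and, in particular, remains constrained to the same set $V$. Because the witness set $V$ attaches to the middle term, which is untouched by the reversal, it carries over verbatim to both right-hand sides, which is why $V$ need not be relabelled. A clean way to present the finished proof would be a two-column \texttt{paracol} display mirroring Fact~\ref{duality}, with the product chain on the left and the sum chain on the right.
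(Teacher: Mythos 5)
Your proposal is correct and takes essentially the same route as the paper's own proof: both proceed by a mechanical chain of biconditionals, unfolding the definition of relational inverse and Definitions~\ref{DefRelP} and~\ref{DefRelS}, then reassembling with the two relations swapped; the only difference is how you label the outer pair. Nothing is missing.
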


\begin{proof}
   \noindent\begin{paracol}{2}
\begin{align*}
&\langle x, z\rangle \in (R \hspace{0.1em} \stackunder{$\mid$}{$\ms V$} \hspace{0.1em} S)^{-1}\\
\text{iff} \ &\langle z, x \rangle \in  R \hspace{0.1em} \stackunder{$\mid$}{$\ms V$} \hspace{0.1em} S\\
\text{iff} \ &\textup{there is} \ y \in V \ \textup{s.t.} \ \langle z, y\rangle \in R\\
&\text{and} \ \langle y, x\rangle \in S \\
\text{iff} \ &\textup{there is} \ y \in V \ \textup{s.t.} \ \langle y, z\rangle \in R^{-1}\\
&\text{and} \ \langle x, y\rangle \in S^{-1} \\
\text{iff} \ &\langle x, z\rangle \in S^{-1} \hspace{0.1em} \stackunder{$\mid$}{$\ms V$} \hspace{0.1em} R^{-1}.
\end{align*}
\switchcolumn[1]
\begin{align*}
&\langle x, z\rangle \in (R \hspace{0.2em} \stackunder{$\dag$}{$\ms V$} \hspace{0.2em} S)^{-1}\\
\text{iff} \ &\langle z, x \rangle \in  R \hspace{0.2em} \stackunder{$\dag$}{$\ms V$} \hspace{0.2em} S\\
\text{iff} \ &\textup{for all} \ y \in V,\langle z, y\rangle \in R\\
&\text{or} \ \langle y, x\rangle \in S \\
\text{iff} \ &\textup{for all} \ y \in V, \langle y, z\rangle \in R^{-1}\\
&\text{or} \ \langle x, y\rangle \in S^{-1} \\
\text{iff} \ &\langle x, z\rangle \in S^{-1} \hspace{0.2em} \stackunder{$\dag$}{$\ms V$} \hspace{0.2em} R^{-1}.
\end{align*}
\end{paracol}
\end{proof}

We are now in a position to identify the set of $\mathsf{ST}$-antivalid inferences with the relative sum of the sets of $\mathsf{K}_3$-antivalid inferences and $\mathsf{LP}$-antivalid inferences.

\begin{theorem}\label{thm:st-}
$\mathbb{ST}^{-} = \mathbb{K}_3^{-} \ \dag \ \mathbb{LP}^{-}$
\end{theorem}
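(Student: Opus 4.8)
The plan is to derive this identity by chaining together three results already in hand rather than reasoning directly about antivalid valuations. The key observation is that $\mathsf{ST}$-antivalidity has already been tied to $\mathsf{TS}$-validity via the relational inverse (Corollary \ref{cor:inv}), and $\mathsf{TS}$-validity has already been characterized as a relative sum (Theorem \ref{thm:ts}). Since Fact \ref{fct:inv} tells us how the inverse interacts with the relative sum, the three ingredients should compose into the desired equation with no further semantic work.

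Concretely, I would proceed as follows. First, invoke Corollary \ref{cor:inv} to rewrite the left-hand side as $\mathbb{ST}^{-} = (\mathbb{TS}^{+})^{-1}$. Next, substitute the relative-sum characterization of $\mathbb{TS}^{+}$ from Theorem \ref{thm:ts}, yielding $\mathbb{ST}^{-} = (\mathbb{LP}^{+} \ \dag \ \mathbb{K}_3^{+})^{-1}$. Then apply the inverse-sum identity of Fact \ref{fct:inv}, with $R = \mathbb{LP}^{+}$ and $S = \mathbb{K}_3^{+}$, which reverses the order of the operands and inverts each: $(\mathbb{LP}^{+} \ \dag \ \mathbb{K}_3^{+})^{-1} = (\mathbb{K}_3^{+})^{-1} \ \dag \ (\mathbb{LP}^{+})^{-1}$. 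Finally, use the remaining two equalities of Corollary \ref{cor:inv}, namely $(\mathbb{K}_3^{+})^{-1} = \mathbb{K}_3^{-}$ and $(\mathbb{LP}^{+})^{-1} = \mathbb{LP}^{-}$, to conclude that $\mathbb{ST}^{-} = \mathbb{K}_3^{-} \ \dag \ \mathbb{LP}^{-}$, as required.

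There is essentially no hard obstacle here: the proof is a purely formal chain of substitutions, and all the genuine content lives upstream in Theorem \ref{thm:ts} (the semantic characterization of $\mathsf{TS}$), in Corollary \ref{cor:inv} (the structural-duality equalities inherited from Proposition \ref{prop:dua}), and in Fact \ref{fct:inv} (the order-reversing behavior of the inverse on the relative sum). The only point demanding minor care is bookkeeping the \emph{reversal of operand order} induced by Fact \ref{fct:inv}: the sum $\mathbb{LP}^{+} \ \dag \ \mathbb{K}_3^{+}$ correctly produces $\mathbb{K}_3^{-} \ \dag \ \mathbb{LP}^{-}$ and not $\mathbb{LP}^{-} \ \dag \ \mathbb{K}_3^{-}$, which is exactly what Section \ref{sec:converse} shows to be a genuinely different (and strictly larger) set. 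Keeping the operands in the right order is therefore the one place where a slip would matter.
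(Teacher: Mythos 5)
Your proposal is correct and follows exactly the same route as the paper's own proof: rewrite $\mathbb{ST}^{-}$ as $(\mathbb{TS}^{+})^{-1}$ via Corollary \ref{cor:inv}, substitute Theorem \ref{thm:ts}, apply the inverse-sum identity of Fact \ref{fct:inv} (which reverses the operand order), and finish with Corollary \ref{cor:inv} again. Your attention to the operand-order reversal is exactly the right point of care, and nothing is missing.
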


\begin{proof}
By Corollary \ref{cor:inv}, $\mathbb{ST}^{-} = (\mathbb{TS}^{+})^{-1}$, moreover $\mathbb{TS}^{+} = \mathbb{LP}^{+} \ \dag \ \mathbb{K}_3^{+}$ by Theorem \ref{thm:ts}, so $(\mathbb{TS}^{+})^{-1} = (\mathbb{LP}^{+} \ \dag \ \mathbb{K}_3^{+})^{-1}$. Finally, by Fact \ref{fct:inv} and Corollary \ref{cor:inv} again, $(\mathbb{LP}^{+} \ \dag \ \mathbb{K}_3^{+})^{-1} = (\mathbb{K}_3^{+})^{-1} \ \dag \ (\mathbb{LP}^{+})^{-1} = \mathbb{K}_3^{-} \ \dag \ \mathbb{LP}^{-}$.
\end{proof}

The set of $\mathsf{TS}$-antivalid inferences can in turn be characterized as the relative product of the set of $\mathsf{LP}$-antivalid inferences and the set of $\mathsf{K}_3$-antivalid inferences.

\begin{theorem}\label{thm:ts-}
$\mathbb{TS}^{-} = \mathbb{LP}^{-} \mid \mathbb{K}_3^{-}$
\end{theorem}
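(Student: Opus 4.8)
The plan is to mirror the proof of Theorem \ref{thm:st-} exactly, dualizing every step. Whereas that proof expressed $\mathbb{ST}^{-}$ as an inverse, rewrote the relevant validity set as a relative sum via Theorem \ref{thm:ts}, and then pushed the inverse through using Fact \ref{fct:inv}, here I would start from $\mathbb{TS}^{-}$, invoke the characterization of $\mathbb{ST}^{+}$ as a relative \emph{product} from Theorem \ref{thm:st}, and transport the inverse across the product. No new combinatorial or semantic work is required: the whole argument is a chain of substitutions among results already established.

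Concretely, I would proceed as follows. First, by Corollary \ref{cor:inv} we have $\mathbb{TS}^{-} = (\mathbb{ST}^{+})^{-1}$, which reduces the problem to computing the inverse of $\mathbb{ST}^{+}$. Second, Theorem \ref{thm:st} gives $\mathbb{ST}^{+} = \mathbb{K}_3^{+} \mid \mathbb{LP}^{+}$, so that $(\mathbb{ST}^{+})^{-1} = (\mathbb{K}_3^{+} \mid \mathbb{LP}^{+})^{-1}$. Third, the inverse-product identity of Fact \ref{fct:inv}, applied with $R = \mathbb{K}_3^{+}$ and $S = \mathbb{LP}^{+}$, yields $(\mathbb{K}_3^{+} \mid \mathbb{LP}^{+})^{-1} = (\mathbb{LP}^{+})^{-1} \mid (\mathbb{K}_3^{+})^{-1}$. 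Finally, applying Corollary \ref{cor:inv} once more to each factor --- $(\mathbb{LP}^{+})^{-1} = \mathbb{LP}^{-}$ and $(\mathbb{K}_3^{+})^{-1} = \mathbb{K}_3^{-}$ --- gives $\mathbb{TS}^{-} = \mathbb{LP}^{-} \mid \mathbb{K}_3^{-}$, as required.

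The one point that demands attention --- and the closest thing to an obstacle --- is the order reversal built into Fact \ref{fct:inv}: taking the inverse of a relative product swaps the two operands, so the product of $\mathbb{K}_3^{+}$ and $\mathbb{LP}^{+}$ becomes a product whose left factor is $(\mathbb{LP}^{+})^{-1}$ and whose right factor is $(\mathbb{K}_3^{+})^{-1}$. It is precisely this swap that produces $\mathbb{LP}^{-} \mid \mathbb{K}_3^{-}$ rather than $\mathbb{K}_3^{-} \mid \mathbb{LP}^{-}$, matching the order in the theorem statement and cohering with Section \ref{sec:converse}, where it is shown that the order of the operands is not interchangeable. Getting this ordering right is the only place where a sign error could creep in; everything else is a mechanical chaining of Corollary \ref{cor:inv}, Theorem \ref{thm:st}, and Fact \ref{fct:inv}.
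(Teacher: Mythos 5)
Your proposal is correct and is essentially identical to the paper's own proof: both proceed by applying Corollary \ref{cor:inv} to write $\mathbb{TS}^{-} = (\mathbb{ST}^{+})^{-1}$, substituting $\mathbb{ST}^{+} = \mathbb{K}_3^{+} \mid \mathbb{LP}^{+}$ from Theorem \ref{thm:st}, pushing the inverse through the product via Fact \ref{fct:inv} (with the operand swap), and converting the inverted factors back to antivalidity sets by Corollary \ref{cor:inv}. Your observation about the order reversal being the crux is also exactly what the paper's chain of identities relies on.
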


\begin{proof}
By Corollary \ref{cor:inv}, $\mathbb{TS}^{-} = (\mathbb{ST}^{+})^{-1}$, moreover $\mathbb{ST}^{+} = \mathbb{K}_3^{+} \mid \mathbb{LP}^{+}$ by Theorem \ref{thm:st}, so $(\mathbb{ST}^{+})^{-1} = (\mathbb{K}_3^{+} \mid \mathbb{LP}^{+})^{-1}$. Finally, by Fact \ref{fct:inv} and Corollary \ref{cor:inv} again, $(\mathbb{K}_3^{+} \mid \mathbb{LP}^{+})^{-1} = (\mathbb{LP}^{+})^{-1} \mid (\mathbb{K}_3^{+})^{-1} = \mathbb{LP}^{-} \mid \mathbb{K}_3^{-}$.
\end{proof}

Combining Theorem \ref{thm:st} and Theorem \ref{thm:ts} with Theorem \ref{thm:extbis}, and Theorem \ref{thm:st-} and \ref{thm:ts-} with Corollary \ref{cor:ext-} leads to the following proposition, which shows that $\mathsf{ST}$ and $\mathsf{TS}$ can be extensionally characterized by either $\mathsf{LP}$ or $\mathsf{K}_3$.

\begin{theorem}\label{cor:int}
\[\arraycolsep=10pt\def\arraystretch{2}
\begin{array}{cc} 
\mathbb{ST}^{+}= \mathbb{K}_3^{+} \mid \Mysim\mathbb{K}_3^{-} = \Mysim\mathbb{LP}^{-} \mid \mathbb{LP}^{+} &
 \mathbb{TS}^{+}= \Mysim\mathbb{K}_3^{-} \ \dag \ \mathbb{K}_3^{+} = \mathbb{LP}^{+} \ \dag \ \Mysim\mathbb{LP}^{-}\\
 \mathbb{ST}^{-}= \mathbb{K}_3^{-} \ \dag \ \Mysim\mathbb{K}_3^{+} = \Mysim\mathbb{LP}^{+} \ \dag \ \mathbb{LP}^{-} &
 \mathbb{TS}^{-}= \Mysim\mathbb{K}_3^{+} \mid \mathbb{K}_3^{-} = \mathbb{LP}^{-} \mid \Mysim\mathbb{LP}^{+}
\end{array}\]
\end{theorem}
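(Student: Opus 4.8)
The plan is to derive every identity in the statement by pure substitution, feeding the relational characterizations of Sections \ref{sec:st}--\ref{sec:converse} into the operational-duality identities established earlier in the present section. The four ingredients are Theorem \ref{thm:st} ($\mathbb{ST}^{+} = \mathbb{K}_3^{+} \mid \mathbb{LP}^{+}$), Theorem \ref{thm:ts} ($\mathbb{TS}^{+} = \mathbb{LP}^{+} \,\dag\, \mathbb{K}_3^{+}$), Theorem \ref{thm:st-} ($\mathbb{ST}^{-} = \mathbb{K}_3^{-} \,\dag\, \mathbb{LP}^{-}$) and Theorem \ref{thm:ts-} ($\mathbb{TS}^{-} = \mathbb{LP}^{-} \mid \mathbb{K}_3^{-}$). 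These express each of $\mathbb{ST}^{\pm}$ and $\mathbb{TS}^{\pm}$ as a product or a sum whose operands are drawn from $\{\mathbb{K}_3^{+}, \mathbb{LP}^{+}\}$ or $\{\mathbb{K}_3^{-}, \mathbb{LP}^{-}\}$; the task is only to rewrite those operands so that each expression is built from a single logic ($\mathsf{K}_3$ or $\mathsf{LP}$) together with the operator $\Mysim$.

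First I would treat the positive cases. Since $\mid$ and $\dag$ are operations on sets of inferences and the identities in play are set-equalities, substitution into either operand is legitimate. Starting from Theorem \ref{thm:st} and using $\mathbb{LP}^{+} = \Mysim \mathbb{K}_3^{-}$ from Theorem \ref{thm:extbis} gives $\mathbb{ST}^{+} = \mathbb{K}_3^{+} \mid \Mysim \mathbb{K}_3^{-}$, whereas using $\mathbb{K}_3^{+} = \Mysim \mathbb{LP}^{-}$ in the other operand gives $\mathbb{ST}^{+} = \Mysim \mathbb{LP}^{-} \mid \mathbb{LP}^{+}$; the two choices yield the two displayed equalities on the first line. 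The same recipe applied to Theorem \ref{thm:ts} produces $\mathbb{TS}^{+} = \Mysim \mathbb{K}_3^{-} \,\dag\, \mathbb{K}_3^{+}$ by rewriting $\mathbb{LP}^{+}$, and $\mathbb{TS}^{+} = \mathbb{LP}^{+} \,\dag\, \Mysim \mathbb{LP}^{-}$ by rewriting $\mathbb{K}_3^{+}$.

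The negative cases are symmetric, drawing the dual identities $\mathbb{K}_3^{-} = \Mysim \mathbb{LP}^{+}$ and $\mathbb{LP}^{-} = \Mysim \mathbb{K}_3^{+}$ from Corollary \ref{cor:ext-}. Substituting these into Theorem \ref{thm:st-} gives $\mathbb{ST}^{-} = \mathbb{K}_3^{-} \,\dag\, \Mysim \mathbb{K}_3^{+}$ and $\mathbb{ST}^{-} = \Mysim \mathbb{LP}^{+} \,\dag\, \mathbb{LP}^{-}$, and into Theorem \ref{thm:ts-} gives $\mathbb{TS}^{-} = \Mysim \mathbb{K}_3^{+} \mid \mathbb{K}_3^{-}$ and $\mathbb{TS}^{-} = \mathbb{LP}^{-} \mid \Mysim \mathbb{LP}^{+}$, matching the bottom two lines.

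There is no genuine obstacle here: the content of the theorem lies entirely in the earlier results it assembles, and the present step is a bookkeeping exercise. The only points requiring attention are that each line records two equalities obtained by rewriting the two operands \emph{separately} (never both at once, which would reintroduce the other logic), and that the substituted equalities are genuine identities between sets of inferences, so that plugging them into $\mid$ and $\dag$ is well defined.
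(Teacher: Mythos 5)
Your proposal is correct and follows exactly the paper's own route: the paper also obtains Theorem \ref{cor:int} by substituting the duality identities of Theorem \ref{thm:extbis} (for the positive sets) and Corollary \ref{cor:ext-} (for the negative sets) into the relational characterizations of Theorems \ref{thm:st}, \ref{thm:ts}, \ref{thm:st-} and \ref{thm:ts-}. Your write-up merely makes explicit the operand-by-operand substitutions that the paper leaves as ``follows directly from'' those results.
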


\begin{proof}
    This follows directly from Theorem \ref{thm:st}, Theorem \ref{thm:ts}, Theorem \ref{thm:st-}, Theorem \ref{thm:ts-}, Theorem \ref{thm:extbis} and Corollary \ref{cor:ext-}.
\end{proof}

Those identities lend support to the view that $\mathsf{ST}$ isn't more fundamentally tied to $\mathsf{LP}$ than it is to $\mathsf{K}_3$ (see \cite{cobreros2020inferences}): either system can be used as a primitive in order to define either $\mathsf{ST}$ or $\mathsf{TS}$ at the inferential level.

\section{Comparisons}\label{sec:discussion}

We have shown that $\mathsf{ST}$ and $\mathsf{TS}$ can be extensionally defined from either $\mathsf{K}_3$ or $\mathsf{LP}$. In this section, we comment on the significance of these findings by comparing them to two sets of results found in the literature. We start by discussing the link between the characterization proposed of $\mathsf{ST}$ and a genereralization of Craig's interpolation theorem proven by \cite{milne2016refinement} for classical logic. Then, we return to the significance of the duality results presented in the previous section, in particular in comparison to those of \cite{cobreros2021cant} and \cite{DaRéetal2020}.

\subsection{Interpolation}
Given an $\mathsf{ST}$-valid inference $\Gamma\Rightarrow \Delta$, Theorem \ref{thm:st} gives us a constructive method of finding a formula $\phi$ such that $\Gamma \Rightarrow \phi$ is $\mathsf{K}_3$-valid, and $\phi \Rightarrow \Delta$ is $\mathsf{LP}$-valid, namely the $\mathsf{K}_3$-disjunctive normal form of $\bigwedge \Gamma$. We called such a formula a connecting formula, but in the example we produced, it has the features of an interpolant in the sense of Craig's theorem for propositional logic. Craig's interpolation theorem for classical logic states that when $\phi \Rightarrow \psi$ is classically valid, and provided $\phi$ is not a contradiction and $\psi$ not a tautology, there is an interpolant formula $\chi$, consisting of only atoms common to $\phi$ and $\psi$, such that $ \phi \Rightarrow \chi$ and $\chi \Rightarrow \psi$ are classically valid. Hence, it is natural to wonder about the connection between our result and Craig's interpolation theorem.

As it turns out, \cite{milne2016refinement} proved a refinement of Craig's theorem, which is that when $\phi$ classically entails $\psi$ and neither $\phi$ is contradiction nor $\psi$ a tautology, then there is an interpolant formula $\chi$ such that $\phi\Rightarrow \chi$ is $\mathsf{K}_3$-valid, and $\chi\Rightarrow \psi$ is $\mathsf{LP}$-valid (see also \cite{Prenosil2017} for a statement of other interpolation properties for three-valued logics in the vicinity of $\mathsf{LP}$ and $\mathsf{K}_3$). An examination of Milne's proof shows that he used the same method we used {in the first half of the proof of Theorem \ref{thm:st}}, except that he restricts it to atoms common to the antecedent and succedent of an argument. In the example we gave of $p \vee (q \wedge \neg q) \Rightarrow p \wedge (q \vee \neg q) $, Milne's method would produce as interpolating formula the same as ours, namely $(p \wedge q) \vee (p \wedge \neg q) \wedge p$. But in the variant $p \vee (q \wedge \neg q) \Rightarrow p \wedge (r \vee \neg r)$, Milne's method would produce $p$ as the interpolant given Craig's constraint on atom-sharing. We do not have that constraint and our method would still produce $(p \wedge q) \vee (p \wedge \neg q) \wedge p$ as the connecting formula, since it is based solely on producing a $\mathsf{K}_3$-DNF.

 Milne's result is not concerned with $\mathsf{ST}$, it is also limited to the case of inferences with a {satisfiable} single premise and a {falsifiable} single conclusion, and it is proven for the language of propositional logic without constants. However, it bears a direct connection to our characterization of $\mathsf{ST}$, since it can be used to show that classical logic is the product of $\mathsf{K}_3$ and $\mathsf{LP}$. This raises a more general question, namely whether we may strengthen the characterization of $\mathsf{ST}$ in terms of relational product by making systematic reference to the interpolation property. In the language involving constants, we know that $\top \Rightarrow \lambda$ is $\mathsf{ST}$-valid, and in this case the only connecting formula is $\top$, since $\top \Rightarrow \top$ is $\mathsf{K}_3$-valid, and $\top \Rightarrow \lambda$ is $\mathsf{LP}$-valid, so the usual notion of interpolant does not obtain.
 
 But the question remains of more general interest. In particular, as discussed in \cite{dare2023three}, and as originally proved by \cite{szmuc2021meaningless}, the logic $\mathsf{ST}$ can also be obtained in terms of mixed consequence on the basis of the Weak Kleene logic $\mathsf{WK}$ and its dual system Paraconsistent Weak Kleene $\mathsf{PWK}$: both systems rest on the Weak Kleene scheme, unlike $\mathsf{K}_3$ and $\mathsf{LP}$. So does $\mathsf{ST}$ equal the relational product of $\mathsf{WK}$ and $\mathsf{PWK}$, and if so, is the connecting formula an interpolant? More generally, \cite{dare2023three} show that $\mathsf{ST}$ can be characterized in terms of mixed consequence over even more schemes intermediate between the Weak Kleene and the Strong Kleene scheme: for those logics, does the product characterization still hold, and if so, is the connecting formula always an interpolant in the sense of Craig? We reserve the answer to these questions for further work.

\subsection{Duality}
Several duality results have been offered in the literature on Strong Kleene logics. As reviewed above, \textcite{Cobrerosetal2012} have shown that $\mathsf{K}_3$ and $\mathsf{LP}$ are negation dual, and $\mathsf{ST}$ and $\mathsf{TS}$ negation self-dual; \textcite{cobreros2021cant} that $\mathsf{ST}$ and $\mathsf{TS}$ are structurally dual and $\mathsf{K}_3$ and $\mathsf{LP}$ structurally self-dual; while \textcite{DaRéetal2020} that $\mathsf{ST}$ and $\mathsf{TS}$ are metainferentially dual.\footnote{A metainference is an inference between inferences. \textcite{DaRéetal2020} show, with the help of a notion of antisatisfaction different from the one used here, that every metainference can be dualized. They prove that for every $\mathsf{ST}$-valid metainference, its dual is $\mathsf{TS}$-valid (and conversely).} Yet, none of these notions of duality makes $\mathsf{ST}$ the dual of $\mathsf{TS}$ in the same sense in which $\mathsf{K}_3$ is the dual of $\mathsf{LP}$.

The notion of operational duality put forward in this paper differs from each of these notions. The difference with structural duality and metainferential duality is readily noticeable, since none of them involves connectives and are thus not operational. The difference with negation duality is less obvious, both notions involve operators and make $\mathsf{K}_3$ dual to $\mathsf{LP}$ and $\mathsf{ST}$ and $\mathsf{TS}$ self-dual. But as shown earlier, no function can be straightforwardly defined from negation duality with the intent of characterizing the set of $\mathsf{K}_3$-valid inferences from the set of $\mathsf{LP}$-valid inferences, contrary to operational duality, where such a function was easily specified. Moreover, negation duality only involves a single operator, negation, whereas operational duality involves all the operators of the language. Given a formula, all the operators occurring in it are interchanged by the function $\Mysim$ with their dual: $\top$ with $\bot$, $\lambda$ with $\lambda$, $\neg$ with $\neg$, and $\wedge$ with $\lor$. To see why for all the logics discussed above this duality between operators obtains, it is enough to notice that if $1$ and $0$ are renamed $0$ and $1$, then the truth-conditions of $\bot$ and $\top$, and $\wedge$ and $\lor$ are swapped, while the truth-conditions of $\lambda$ and $\neg$ are left unchanged.\footnote{See \cite[p.~23-24]{Kleene1967-KLEML} for an illustration of this duality between operators that can easily be adapted to the logics discussed here.} Despite these differences, the two notions are connected through the following fact:

\begin{fact}
For any truth-functional logic $\mathsf{L}$ based on the language of Definition \ref{language2} and equipped with an involutive negation satisfying the De Morgan laws:
\[\arraycolsep=10pt\def\arraystretch{2}
\begin{array}{ccc}
    \models_{\mathsf{L}} \Mysim (\Gamma) \Rightarrow \Mysim(\Delta) & \textup{if and only if} & \models_{\mathsf{L}} \neg(\Gamma) \Rightarrow \neg(\Delta).
 \end{array}\]
 \end{fact}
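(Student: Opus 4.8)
The plan is to reduce the biconditional to a single semantic lemma relating the two operators $\Mysim$ and $\neg$ at the level of valuations, and then to exploit the fact that the associated transformation of valuations is a bijection --- exactly the strategy already used for the unnamed lemma relating $v(\phi)$ and $\val(\Mysim\phi)$ earlier in the paper. Writing $\langle D_1, D_2\rangle$ for the designated sets that define $\models_{\mathsf{L}}$, truth-functionality means that each valuation acts through fixed value operations: a unary operation $\ominus$ interpreting $\neg$ (so $v(\neg\phi)=\ominus\, v(\phi)$), binary operations $\sqcap,\sqcup$ interpreting $\wedge,\lor$, and fixed values for $\top,\bot,\lambda$. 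Involutivity of $\neg$ gives $\ominus\ominus x = x$, and the De Morgan laws give $\ominus(x\sqcap y)=\ominus x\sqcup\ominus y$ and $\ominus(x\sqcup y)=\ominus x\sqcap\ominus y$.

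First I would introduce, for each SK-valuation $v$, the valuation $\hat v$ defined on atoms by $\hat v(p):=v(\neg p)$ (equivalently $\hat v(p)=\ominus\, v(p)$). Because $\ominus$ is involutive, $\widehat{\hat v}=v$, so the map $v\mapsto\hat v$ is an involution on the set of valuations, hence a bijection. The heart of the argument is then the lemma $v(\neg\phi)=\hat v(\Mysim\phi)$ for every $\phi\in\mathcal L$, proved by a routine induction on the structure of $\phi$. The atomic case is immediate from the definition of $\hat v$; the case $\phi=\neg\psi$ uses $\Mysim\neg\psi=\neg\Mysim\psi$ together with involutivity; and the cases $\phi=\psi\wedge\chi$ and $\phi=\psi\lor\chi$ use the De Morgan laws together with the clauses $\Mysim(\psi\wedge\chi)=\Mysim\psi\lor\Mysim\chi$ and $\Mysim(\psi\lor\chi)=\Mysim\psi\wedge\Mysim\chi$ to match the value-level swap of $\sqcap$ and $\sqcup$.

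With the lemma in hand, the conclusion is purely formal. The statement $\models_{\mathsf{L}}\neg(\Gamma)\Rightarrow\neg(\Delta)$ unfolds to: for every $v$, if $v(\neg\gamma)\in D_1$ for all $\gamma\in\Gamma$ then $v(\neg\delta)\in D_2$ for some $\delta\in\Delta$. Rewriting each $v(\neg\gamma)$ as $\hat v(\Mysim\gamma)$ and each $v(\neg\delta)$ as $\hat v(\Mysim\delta)$ via the lemma, and then using that $v\mapsto\hat v$ is a bijection (so quantifying over $\hat v$ is the same as quantifying over all valuations $w$), this is equivalent to: for every $w$, if $w(\Mysim\gamma)\in D_1$ for all $\gamma$ then $w(\Mysim\delta)\in D_2$ for some $\delta$, which is precisely $\models_{\mathsf{L}}\Mysim(\Gamma)\Rightarrow\Mysim(\Delta)$.

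The delicate point --- and the one I expect to be the real obstacle --- lies in the base cases for the constants, where the induction requires $v(\neg\top)=v(\bot)$, $v(\neg\bot)=v(\top)$, and $v(\neg\lambda)=v(\lambda)$, matching the clauses $\Mysim\top=\bot$, $\Mysim\bot=\top$, $\Mysim\lambda=\lambda$. The first two follow from the De Morgan hypothesis once $\top,\bot$ are read as the extremal values that negation must interchange, but the clause $v(\neg\lambda)=v(\lambda)$ is genuinely an extra requirement: an arbitrary involutive De Morgan negation need not fix the value assigned to $\lambda$. I would therefore make explicit that the self-duality of $\lambda$ under $\neg$ is assumed, noting that it holds for all the logics considered here (where $v(\lambda)=\sfrac{1}{2}$ and $\ominus x = 1-x$, so $\ominus\sfrac12=\sfrac12$) and that it is exactly the condition encoded by the stipulation $\Mysim\lambda=\lambda$ in the definition of $\Mysim$; in other words, the three constant clauses of $\Mysim$ are precisely calibrated to record the value-level action of $\neg$ on the constants, which is what makes the two operational transformations coincide on validities.
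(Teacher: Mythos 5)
Your proof is correct, and at its core it performs the same induction as the paper's; the difference lies in the transfer mechanism. The paper's proof introduces the substitution $\sigma$ replacing every atom $p$ by $\neg p$, shows by induction that $v(\sigma(\Mysim\phi)) = v(\neg\phi)$ and $v(\sigma(\neg\phi)) = v(\Mysim\phi)$ for all $v$, and then transfers validity via the tacit principle that validity in a truth-functional logic is closed under uniform substitution, using one identity for each direction of the biconditional. You instead push the negation into the valuation, setting $\hat v(p) = \ominus v(p)$; since $\hat v$ agrees with $v\circ\sigma$ on atoms, your key lemma $\hat v(\Mysim\phi) = v(\neg\phi)$ is literally the paper's first identity, but the transfer now comes from the fact that $v \mapsto \hat v$ is an involution, hence a bijection, on the space of valuations, which yields both directions from the single lemma. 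Your route is slightly more self-contained: substitution-invariance, which the paper uses without stating or proving, is itself standardly established by exactly your composition-of-valuations argument. A further point in your favour is that you make explicit what the paper's ``simple but tedious induction'' silently requires at the constant cases: $\ominus v(\top) = v(\bot)$, which does follow once De Morgan negation is seen to be order-reversing and $\top, \bot$ are read as the lattice extremes, and $\ominus v(\lambda) = v(\lambda)$, which is a genuinely additional hypothesis --- the value of $\lambda$ must be a fixed point of the negation operation --- true in the Strong Kleene setting but not forced by an arbitrary involutive De Morgan negation. Both proofs need these facts; only yours identifies them.
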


\begin{proof}
    Let $\sigma$ be a substitution of every atom by its negation. By a simple but tedious induction one can show that for all $v$, $v(\sigma(\Mysim \phi)) = v(\neg \phi)$ and $v(\sigma(\neg \phi)) = v(\Mysim \phi)$. Hence, if $\models_{\mathsf{L}} \Mysim (\Gamma) \Rightarrow \Mysim(\Delta)$, then $\models_{\mathsf{L}} \sigma(\Mysim (\Gamma)) \Rightarrow \sigma(\Mysim(\Delta))$, and therefore $\models_{\mathsf{L}} \neg(\Gamma) \Rightarrow \neg(\Delta)$. The other direction is similar. 
\end{proof}

This explains why both negation duality and operational duality make $\mathsf{K}_3$ dual to $\mathsf{LP}$ and $\mathsf{ST}$ and $\mathsf{TS}$ self-dual. In the context of truth-functional logics based on the language of Definition \ref{language2} and equipped with an involutive negation satisfying the De Morgan law, the two notions are equivalent.


Still, our notion of operational duality, like the others mentioned, does not by itself offer an account that simultaneously makes $\mathsf{ST}$ and $\mathsf{TS}$ dual, and $\mathsf{K}_3$ and $\mathsf{LP}$ dual. It is only when combined with structural duality, that such a unified account can be obtained. Theorems \ref{thm:extbis} and \ref{cor:ext-} thus provide a uniform characterization of $\mathsf{K}_3$ in terms of $\mathsf{LP}$ and of $\mathsf{ST}$ in terms of $\mathsf{TS}$. Given one of these logics, one can obtain the set of (antivalid) valid inferences of its dual by first taking the set of (valid) antivalid inferences of this logic and then applying the mapping $\Mysim$ to this set.  $\mathsf{ST}$ is dual to $\mathsf{TS}$ under this combined notion inasmuch as $\mathsf{ST}$ is operationally self-dual but structurally dual to $\mathsf{TS}$; $\mathsf{K}_3$ is dual to $\mathsf{LP}$ under this same notion inasmuch as $\mathsf{K}_3$ is structurally self-dual but operationally dual to $\mathsf{LP}$.



As a concluding remark, let us note that since its inception, $\mathsf{ST}$ has been presented as being of type $\mathsf{K}_3$ on the side of premises and of type $\mathsf{LP}$ on the side of conclusions (viz. \cite{cobreros2015vagueness,cobreros2020inferences}), based on the very definition of $\mathsf{ST}$ in terms of mixed standards of truth. For example, \cite{zardini2022final} calls $\mathsf{ST}$ ``$\mathsf{K}_3\mathsf{LP}$'', precisely on account of this hybrid behavior. One might therefore be tempted to call $\mathsf{TS}$ ``$\mathsf{LP}\mathsf{K}_3$'' on the same grounds. The results of this paper may be used to vindicate those denominations. But what matters is to see that the operation that connects  $\mathsf{K}_3$ and $\mathsf{LP}$ extensionally is not the same depending on the case. In particular, as proven in section \ref{sec:converse}, $\mathbb{TS}$ is not the relational product of $\mathbb{LP}$ and $\mathbb{K}_3$, and conversely $\mathbb{ST}$ is not the relational sum of $\mathbb{K}_3$ and $\mathbb{LP}$. What is more, the results proven in Theorem \ref{cor:int} from the duality of $\mathsf{K}_3$ and $\mathsf{LP}$ might equally license  alternative denominations of $\mathsf{ST}$ and $\mathsf{TS}$ as ``$\mathsf{K}_3\mathsf{K}_3$'' or ``$\mathsf{LP}\mathsf{LP}$'', which blurs the distinction between the two logics. Pending a conclusive argument favoring the characterization of a logic in terms of validity rather than antivalidity, we do not see why $\mathsf{ST}$ and $\mathsf{TS}$ should deserve the denominations ``$\mathsf{K}_3\mathsf{LP}$'' and ``$\mathsf{LP}\mathsf{K}_3$'', more than ``$\mathsf{K}_3\mathsf{K}_3$'' or ``$\mathsf{LP}\mathsf{LP}$''.




\section{Conclusion}


In this paper we have proved that $\mathsf{ST}$ and $\mathsf{TS}$ each have a natural set-theoretic characterization based on the inferences of the dual logics $\mathsf{K}_3$ and $\mathsf{LP}$: $\mathsf{ST}$ is the relational product of the inferences of $\mathsf{K}_3$ and $\mathsf{LP}$, while $\mathsf{TS}$ is their relational sum. 

We reckon that those results are of importance philosophically and proof-theoretically. Philosophically, because whereas $\mathsf{ST}$ and $\mathsf{TS}$ are semantically defined using a notion of mixed consequence, the results given here provide a characterization of each logic directly in terms of the inferences of $\mathsf{K}_3$ and $\mathsf{LP}$, without explicitly invoking shifting standards of truth between premises and conclusions. Proof-theoretically, because $\mathsf{ST}$ and $\mathsf{TS}$ are obtained directly via some operation on the logics $\mathsf{K}_3$ and $\mathsf{LP}$. For $\mathsf{ST}$, in particular, one can determine whether $\Gamma$ $\mathsf{ST}$-entails $\Delta$ by constructing the $\mathsf{K}_3$-disjunctive normal form $\Gamma'$ of $\Gamma$ and by checking if $\Gamma'$ $\mathsf{LP}$-entails $\Delta$.

The present characterization is of interest more generally, in particular when considering the metainferential hierarchy discussed by \cite{Barrioetal2020}. Barrio and associates have shown that $\mathsf{ST}$ and $\mathsf{TS}$ are both only the first level of a more extended hierarchy of systems of mixed consequence. The system $\mathsf{TS}/\mathsf{ST}$, for example, is defined by letting $\Gamma \Rightarrow \Delta$ be in $\mathsf{TS}/\mathsf{ST}$ provided every SK-valuation $v$ either fails to $\mathsf{TS}$-satisfy $\gamma$ for some $A$ in $\Gamma$ or $\mathsf{ST}$-satisfies $B$ for some $B$ in $\Delta$ (with $A$, $B$ inferences). This raises the question of whether $\mathsf{TS}/\mathsf{ST}$ can also be defined as the relational product of $\mathsf{TS}$ and $\mathsf{ST}$, and whether dual results can be obtained for the $\mathsf{TS}$-hierarchy. As it turns out, the answer to this question is positive, but it would lie beyond the scope of this paper to examine the $\mathsf{ST}$-hierarchy, and a presentation of these results and of their proof is postponed to another occasion.



Several questions remain open. The most pressing one for us concerns the possibility, discussed in \cite{dare2023three}, of defining the logic $\mathsf{ST}$ using other schemes than the Strong-Kleene scheme. Over those more extended schemes, does the characterization of $\mathsf{ST}$ in terms of relational product remain adequate? And can it be strengthened using the interpolation property in all cases? We have obtained some preliminary results on this matter, but more work needs to be done before we can answer these questions in full generality.\bigskip







\subsection*{Acknowledgments}

{\small We thank Miguel Alvarez, Eduardo Barrio, Pablo Cobreros, Bruno Da R\'e, Francesco Paoli, David Ripley, Robert van Rooij, Mariela Rubin, Damian Szmuc, Allard Tamminga, for helpful exchanges related to the topic of this paper. Special thanks to P. Cobreros, D. Ripley, and R. van Rooij, for early stimulating discussions with PE held in particular at ESSLLI 2011 in Ljubljana. 
We also acknowledge the members of the Buenos Aires Logic Group, and several audiences, in particular at the PLEXUS inaugural conference held in Lisbon in May 2023. This collaboration received support from the program ECOS-SUD (``Logical consequence and many-valued models'', no. A22H01), a bilateral exchange program between Argentina and France, and from PLEXUS (``Philosophical, Logical and Experimental Perspectives on Substructural Logics'', grant agreement no. 101086295), a Marie Sk\l odowska-Curie action funded by the EU under the Horizon Europe Research and Innovation Program. We are grateful to the IIF-SADAF-CONICET and to Monash University for hosting each of us, respectively, during the writing of this paper. We also thank the program ANR-17-EURE0017 (FrontCog) for research conducted at the Department of Cognitive Studies of ENS-PSL.}

\newpage
\printbibliography

\end{document}